\documentclass[12pt,reqno]{amsart}

\setlength{\textheight}{23cm}
\setlength{\textwidth}{16.5cm}
\setlength{\topmargin}{-0.8cm}
\setlength{\parskip}{0.3\baselineskip}
\hoffset=-1.9cm

\usepackage{amsmath,amsfonts,amsthm,mathrsfs,amssymb,amscd,enumerate,url,tikz-cd}

\input{xypic}
\xyoption{all}

\usepackage{xcolor}
\colorlet{mdtRed}{red!50!black}
\definecolor{dblue}{rgb}{0,0,.6}
\usepackage[colorlinks]{hyperref}
\hypersetup{linkcolor=blue,citecolor=dblue,filecolor=dullmagenta,urlcolor=mdtRed}

\newtheorem{theorem}{Theorem}[section]
\newtheorem{proposition}[theorem]{Proposition}
\newtheorem{lemma}[theorem]{Lemma}
\newtheorem{corollary}[theorem]{Corollary}
\numberwithin{equation}{theorem}

\theoremstyle{definition}
\newtheorem{definition}[theorem]{Definition}
\newtheorem{remark}[theorem]{Remark}
\newtheorem{rmk}[theorem]{Remark}

\newcommand{\Z}{\mathbb{Z}}

\newcommand{\Q}{\mathbb{Q}}

\newcommand{\X}{\mathfrak{X}}
\newcommand{\U}{\mathfrak{U}}
\newcommand{\w}{\omega}
\newcommand{\B}{\mathfrak{B}}
\newcommand{\E}{\mathcal{E}}
\newcommand{\bk}{{\bf k}}
\newcommand{\val}{\operatorname{val}}
\newcommand{\GL}{\operatorname{GL}}
\newcommand{\cone}{\operatorname{cone}}
\newcommand{\Spec}{\operatorname{Spec}}
\newcommand{\aff}{\textup{aff}}
\newcommand{\sph}{\textup{sph}}
\newcommand{\Loc}{\operatorname{Loc}}
\newcommand{\lin}{\operatorname{lin}}
\newcommand{\tail}{\operatorname{tail}}

\newcommand{\mf}[1]{\mathfrak{#1}}

\newcommand{\mb}[1]{\mathbb{#1}}
\newcommand{\mc}[1]{\mathcal{#1}}
\renewcommand{\t}[1]{\widetilde{#1}}
\newcommand{\D}{\mathfrak{D}}
\newcommand{\mS}{\mathcal{S}}
\newcommand{\rar}{\rightarrow}

\usepackage{marginnote}
\usetikzlibrary{calc}

\title[Equivariant Vector Bundles on complexity-one T-varieties]{Equivariant Vector Bundles on complexity-one T-varieties and Bruhat-Tits buildings}
\author[J. Dasgupta]{Jyoti Dasgupta}
\address{Department of Mathematics \& Computing, Indian Institute of Technology (Indian School of Mines) Dhanbad, Jharkhand, India. }
\email{jdasgupta.maths@gmail.com}

\author[C. Gangopadhyay]{Chandranandan Gangopadhyay}
\address{Department of Mathematics, Indian Institute of Science Education and Research, Pune, Maharashtra, India.}
\email{chandranandan@iiserpune.ac.in}

\author[K. Kaveh]{Kiumars Kaveh}
\address{Department of Mathematics, University of Pittsburgh, PA, USA}
\email{kaveh@pitt.edu}

\author[C. Manon]{Christopher Manon}
\address{Department of Mathematics, University of Kentucky, KY, USA}
\email{christopher.manon@uky.edu}
\date{today}

\begin{document}

\begin{abstract}
We give a combinatorial classification of torus equivariant vector bundles on a (normal) projective $T$-variety of complexity-one. This extends the classification of equivariant line bundles on complexity-one $T$-varieties by Petersen-S\"u\ss\ on one hand, and Klyachko's classification of equivariant vector bundles on toric varieties on the other hand. A main ingredient in our classification is the classification of torus equivariant vector bundles on toric schemes over a DVR in terms of piecewise affine maps to the (extended) Bruhat-Tits building of the general linear group.      
\end{abstract}

\maketitle

\tableofcontents

\section{Introduction}
We work over an algebraically closed field $\bk$ of characteristic $0$. In this paper, we give a combinatorial classification of torus equivaraint vector bundles on a (normal) projective $T$-variety of complexity-one. This extends the Klyachko classification of torus equivariant vector bundles on toric varieties on one hand (\cite{Klyachko}), and the Petersen-S\"u\ss\ classification of torus equivariant line bundles on complexity-one $T$-varieties on the other hand (\cite{PS}).

We recall that a $T$-variety is a (normal) variety $X$ together with an effective action of an algebraic torus $T$. The \emph{complexity} of a $T$-variety $X$ is $\dim X - \dim T$, in other words, the codimension of an orbit of maximal dimension. One knows that generic orbits have maximal dimension, and thus complexity is the dimension of ``quotient space'' of $X$ by $T$.
Toric varieties are (normal) $T$-varieties of complexity $0$.
In this paper, we are concerned with (normal) $T$-varieties of complexity $1$, that is, normal $T$-varieties where generic orbits form a $1$-dimensional family.

{\bf Toric varieties and equivariant vector bundles on them.} The geometry of toric varieties can be read off from combinatorics of convex polyhedral cones and convex polytopes. 
It is a classical fact that affine toric varieties are classified by polyhedral cones, and arbitrary toric varieties are classified by polyhedral fans. Torus equivariant line bundles on toric varieties are classified by integral piecewise linear functions with respect to the fan (\cite[Chapters 3, 6]{CLS}).

Let $X_\Sigma$ be a toric variety with fan $\Sigma$. Recall that a torus equivariant vector bundle (or a \emph{toric vector bundle} for short) on $X_\Sigma$ is a vector bundle $\E$ with a linear action of $T$ lifting that of $X_\Sigma$. In the remarkable work \cite{Klyachko}, extending the classification of toric line bundles in terms of integral piecewise linear functions (or support functions), Klyachko gives a classification of rank $r$ toric vector bundles in terms of compatible collections of (decreasing) $\Z$-filtrations in an $r$-dimensional vector space $E$. In \cite{KM}, the Klyachko data of a toric vector bundle $\E$ is interpreted as a \emph{piecewise linear map} from the fan $\Sigma$ to the (extended) Tits building of the group $\GL(E)$. Furthermore, in \cite{KMT}, this is extended to a classification of torus equivariant vector bundles on toric schemes over a discrete valuation ring $\mathcal{O}$ in terms of integral piecewise affine maps to the (extended) Bruhat-Tits building of $\GL(E)$. For a review of this material see Section \ref{sec-prelim}.   

{\bf $T$-varieties and equivariant line bundles on them.} On the other hand, extending the classification of toric varieties by polyhedral cones and fans, Altmann and Hausen (\cite{AH}) give a nice classification of $T$-varieties of arbitrary complexity by the data of polyhedral divisors (generalizations of polyhedral cones) and divisorial fans (generalizations of polyhedral fans). Building on the Altmann-Hausen classification, Petersen-S\"u\ss\ extend the classification of torus equivariant line bundles on toric varieties (in terms of piecewise linear functions on the fan) to $T$-varieties. Their classification is in terms of \emph{divisorial support functions} (\cite{PS}). We also mention \cite{IS-polarized} where, extending the classification of ample line bundles on toric varieties in terms of lattice polytopes, Ilten-S\"u\ss\ give a classification of ample line bundles on a complexity-one $T$-variety in terms of the data of \emph{divisorial polytopes}.

Let $T$ be a torus with character and cocharacter lattices $M$ and $N$, respectively. Let $Y$ be a smooth projective curve. Let $\Sigma_0$ be a fan in $N_\Q = N \otimes_\Z \Q$. We recall that a divisorial fan with tail fan $\Sigma_0$ is a certain collection $\mc{S} = (\mc S_P)_{P \in Y}$ of polyhedral complexes in $N_\Q$ with the same tail fan $\Sigma_0$. Moreover, for almost all $P \in Y$, the polyhedral complex $\mc S_P$ coincides with its tail fan $\Sigma_0$ (see Definition \ref{def-div-fan} and Definition \ref{def-slice}).
We let $X(\mc S)$ denote the complexity-one $T$-variety associated to the divisorial fan $\mc S$. In general, there is only a rational map from $X(\mc S)$ to $Y$. But one constructs a $T$-variety $\t{X}(\mc S)$ with a $T$-equivariant proper birational contraction $\t{X}(\mc S) \to X(\mc S)$ and together with a good quotient map $\pi: \t{X}(\mc S) \to Y$ (Section \ref{subsec-div-fan-T-var}).

We now explain the main results of the paper. Regarding notation, for the main part of the paper, we use notation closer to \cite{AH, AHS, PS}. In the background sections on toric schemes and buildings, we use notation closer to \cite{KM, KMT}.

{\bf Main result.}
Let $K = \bk(Y)$ be the field of rational functions of the curve $Y$. Let $X(\mc S)$ be the complexity-one $T$-variety corresponding to a divisorial fan $\mc S$. Let $E$ be an $r$-dimensional vector space over $K$. For each $P \in Y$, let $\val_P$ be the valuation on $K$ given by the order of vanishing at $P$. We let $\t{\B}_P(E)$ denote the \emph{extended Bruhat-Tits building} associated to the discretely valued field $(K, \val_P)$. We also denote the \emph{extended Tits building} of $E$ by $\t{\B}_\sph(E)$. Note that the extended Tits building does not require a choice of a valuation (see Section \ref{subsec-building}).

In \cite{KMT}, the notion of a \emph{piecewise affine map} from a polyhedral complex $\Sigma_1$ to an (extended) affine building $\t{\mf B}_\aff(E)$ is introduced. A piecewise affine map is a map that sends each polyhedron $\Delta$ in $\Sigma_1$ to an (extended) apartment $\t{A}_\aff(B_\Delta)$ in $\t{\mc B}_\aff(E)$ and $\Phi_{|\Delta}$ is given by an affine linear map (see Definition \ref{def-PA-map}). These are combinatorial gadgets classifying equivariant vector bundles on toric schemes over a DVR (see Section \ref{subsec-tvbs-toric-scheme} and  \cite[Sections 4.2 and 4.3]{KMT}).  

We define a \emph{support map} on a divisorial fan $\mc S$ to be a collection $(h_P: |\mc{S}_p| \to \t{\mc B}_P(E))_{P \in Y}$ of piecewise affine maps that satisfy certain natural compatibility conditions (see Definition \ref{support}).
The following is the main result of the paper (Theorem \ref{thm-main1}):
\begin{theorem}\label{thm-main1-intro}
There is an equivalence of categories between the category of $T$-equivariant vector bundles on $X(\mc S)$ and the category $\mc{C}(\mS)$ of support maps.
\end{theorem}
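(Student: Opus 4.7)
The strategy is to exploit the good quotient $\pi: \t{X}(\mc S) \to Y$ and the resulting local structure of $\t{X}(\mc S)$ over each closed point $P \in Y$ as a toric scheme over the DVR $\mc{O}_{Y,P}$, then apply the classification of torus equivariant vector bundles on such toric schemes from \cite{KMT}.

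First, I would pass from $X(\mc S)$ to $\t{X}(\mc S)$: since the $T$-equivariant proper birational contraction $\t{X}(\mc S) \to X(\mc S)$ is an isomorphism in codimension one between normal varieties, the classification on $X(\mc S)$ reduces, via a standard argument analogous to the line bundle case in \cite{PS}, to a classification on $\t{X}(\mc S)$. It therefore suffices to work on $\t{X}(\mc S)$.

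Given a $T$-equivariant vector bundle $\mc{E}$ on $\t{X}(\mc S)$, at each $P \in Y$ the preimage $\U_P := \pi^{-1}(\Spec \mc{O}_{Y,P})$ is, by construction of $\t{X}(\mc S)$ from the divisorial fan, a toric scheme over the DVR $\mc{O}_{Y,P}$ whose defining polyhedral complex in $N_\Q$ is precisely the slice $\mc{S}_P$. Applying \cite{KMT} to $\mc{E}|_{\U_P}$ produces a piecewise affine map $h_P: |\mc{S}_P| \to \t{\B}_P(E)$, where $E$ is the restriction of $\mc{E}$ to the generic fiber of $\pi$, identified with an $r$-dimensional vector space over $K = \bk(Y)$. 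For almost all $P$ one has $\mc{S}_P = \Sigma_0$, and the linear parts of the $h_P$ all coincide with a common piecewise linear map from $\Sigma_0$ to the Tits building $\t{\B}_\sph(E)$ coming from the toric vector bundle $\mc{E}|_{\pi^{-1}(\eta)}$ on $X_{\Sigma_0} \otimes_\bk K$. The compatibility conditions in Definition \ref{support} are designed precisely to record this shared origin, so $(h_P)_{P \in Y}$ is a support map and the assignment is functorial. Conversely, given a support map, KMT produces local bundles $\mc{E}_P$ on each $\U_P$, and the compatibility forces their generic fiber restrictions to agree, so they glue into a global $T$-equivariant vector bundle on $\t{X}(\mc S)$ (with only finitely many nontrivial gluings needed since $\mc{S}_P = \Sigma_0$ for almost all $P$). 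Full faithfulness and essential surjectivity then follow from the local KMT equivalence together with standard descent and gluing for sheaves along $\pi$.

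The main obstacle is verifying the compatibility of local data across different $P$'s: one must check that the local toric schemes $\U_P$ are glued along the generic fiber exactly in the way dictated by Definition \ref{support}, and that morphisms of support maps assemble into morphisms of global vector bundles on $\t{X}(\mc S)$. This requires unpacking the Altmann--Hausen construction of $\t{X}(\mc S)$ from polyhedral divisors and matching the cocycle data with the conditions on the piecewise affine maps, particularly accounting for the horizontal prime divisors of $\t{X}(\mc S)$ that dominate $Y$ and contribute to the tail-fan piecewise linear structure rather than to any single $h_P$.
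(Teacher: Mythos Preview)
Your overall architecture is right---extract a piecewise affine map at each $P$ via \cite{KMT} and glue---but there is a genuine gap in the reduction step and in how you treat conditions (3) and (4) of Definition~\ref{support}.

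The claim that the contraction $r:\t{X}(\mc S)\to X(\mc S)$ is an isomorphism in codimension one, and that this reduces the vector bundle classification on $X(\mc S)$ to one on $\t{X}(\mc S)$, is not justified and in fact does not work. For pp divisors $\mf D$ with complete locus, $r$ can contract divisors, and even when a proper birational map is small, higher-rank vector bundles do not automatically correspond (pushforward of a locally free sheaf need only be reflexive). The paper never makes such a reduction: in the forward direction it pulls $\mc E$ back to $\t X(\mc S)$ only to read off the $h_P$, while in the converse direction it constructs $\mc E$ directly on $X(\mc S)$ via an fpqc cover built from an open $X'\subset X(\mc S)$ together with the $\t X(\mf D_P)$ for finitely many bad $P$.

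The related, and more substantive, issue is your treatment of conditions (3) and (4). You describe them as recording that the $h_P$ share a common linear part, but they say much more: for each $\mf D\in\mc S$ there must exist a \emph{single} basis $b_{\mf D,1},\dots,b_{\mf D,r}$ of $E$ and characters $u_{\mf D,i}$ such that the given formula for $h_P$ holds simultaneously for all $P$ (complete locus) or all but finitely many $P$ (affine locus). This is precisely the statement that $\mc E|_{X(\mf D)}$ is equivariantly trivial, and it is what pins the classification to $X(\mc S)$ rather than $\t X(\mc S)$. In the paper this is deduced from Lemma~\ref{cover} (which forces triviality on $X(\mf D)$ when $\Loc(\mf D)$ is complete) and Lemma~\ref{trivial}. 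Conversely, without (3)--(4) your gluing of the local bundles $\mc E_P$ would at best produce a vector bundle on $\t X(\mc S)$; the paper instead uses (3)--(4) to build explicit trivial bundles on the charts $X(\mf D)$ or $X(\mf D|_U)$, checks they glue on overlaps (the Gluing Lemma~\ref{gluing}), and then applies fpqc descent to patch in the remaining finitely many $\t X(\mf D_P)$.
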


\begin{rmk}
For each point $P \in Y$, one has the toric scheme $\t{X}(\mc S)_P$ over $\Spec(\mc O_{Y, P})$ (see the diagram \eqref{equ-tild-X-P}). The map $h_P: |\mc S_P| \to \t{\B}_P(E)$ is the piecewise affine map associated to the pull-back of the equivariant vector bundle $\E$ to the toric scheme $\t{X}(\mc S)_P$ (see Theorem \ref{th:vb-dvr}).    
\end{rmk}

\begin{rmk}
(i) The piecewise linear map associated to the pull-back of $\E$ to the fiber over the generic point of $Y$, namely $\Spec(K)$, is given by the common linear part of the piecewise affine maps $h_P$ (see Theorem \ref{thm-main1}). 
(ii) Let $\pi: \t{X}(\mc S) \to Y$ be the quotient map. For almost all $P$, the fiber $\pi^{-1}(P)$ is a toric variety over $\bf k$ with the corresponding fan, the tail fan $\Sigma_0$ of $\mc S$. One can immediately recover the piecewise linear map associated to the restriction of $\E$ to the fiber $\pi^{-1}(P)$ from the piecewise affine map $h_P$ (see Remark \ref{rem-tvb-special-fiber}).   
\end{rmk}

In Sections \ref{sec-split} and \ref{sec-global-sec} we consider two applications of the main theorem (Theorem \ref{thm-main1-intro}):
\begin{itemize}
\item We say that a torus equivariant vector bundle is \emph{equivariantly split} if it is equivariantly isomorphic to a sum of equivariant line bundles. In Theorem \ref{th-splitting}, we give a criterion for equivariant splitting of $\E$ in terms of its support map $(h_P)_{P \in Y}$. This is an analogue of Klyachko's criterion for splitting of toric vector bundles (\cite{Klyachko}).

\item In Theorem \ref{thm-global}, we give a description of space of global sections $H^0(X(\mc S), \E)$ in terms of inequalities involving the support map of $\E$. This extends the corresponding statements in \cite{PS} and \cite{Klyachko} and is an extension of the well-known description of space of global sections of a toric line bundle on a toric variety in terms of lattice points in its Newton polytope. 
\end{itemize}

Finally, in Section \ref{sec-examples}, we describe the support maps of equivariant vector bundles in two classes of examples: (1) toric downgrades, that is, toric vector bundles over a toric variety regarded as a complexity-one $T$-variety for a codimension $1$ subtorus, and (2) (co)tangent bundles of complexity-one $T$-varieties. 

\begin{remark}
To extend our results to $T$-varieties of complexity $>1$, one needs to introduce the Bruhat-Tits buildings of general linear groups over valued fields with valuations of rank $>1$.      
\end{remark}

\begin{remark}
In the interesting work \cite{IS}, Ilten and S\"u\ss\ give a fully faithful functor from the category of equivariant vector bundles on a $T$-variety $X$ to a certain category of filtered vector bundles on a suitable quotient stack $Z$ of $X$ by $T$. When $X$ is factorial, they show that this functor gives an equivalence of categories. For an equivariant vector bundle on a complexity-one $T$-variety $X$, it is interesting to work out the relationship between the collection of filtered vector bundles on the quotient stack $Z$ (as in \cite{IS}) and the support map data (as in the present paper). In this regard, the well-known description of the affine Grassmannian of $\GL(E)$ over $\mc O_P$ (which can be identified with the set of lattice points in $\t{B}_P(E)$) as the moduli space of certain equivariant vector bundles over $\Spec(\mc O_P)$ should be relevant. 
\end{remark}

{\bf Further directions.} We end the introduction by proposing some problems for further research. As before, $\E$ denotes an equivariant vector bundle on a complexity-one $T$-variety $X$.
\begin{enumerate}
    \item[(i)] Give a description of equivariant Chern classes of $\E$ in terms of its associated support map $h=(h_P)_{P \in Y}$. The paper \cite{BKM}, which gives a description of equivariant Chern classes of the special fiber of a toric vector bundle on a toric scheme over a DVR, should be relevant to this.
    \item[(ii)] Give a criterion for (semi)stability of $\E$ in terms of its support map. This has been addressed in the toric case in the papers \cite{DDK, HNS, Devey}.
    \item[(iii)] Give criteria for ampleness and global generation of $\E$ in terms of its support map. In the toric case, such criteria have been given in \cite[Section 2]{HMP}, \cite[Theorem 1.2]{DJS} and \cite[Theorems 2.12 and 2.13]{KM}. The case of equivariant line bundles on complexity-one $T$-varieties has been addressed in \cite{IS-polarized}. 
\end{enumerate}

\bigskip
\noindent{\bf Acknowledgements.}
Christopher Manon was partially supported by Simons Collaboration Grant 587209 and National Science Foundation Grant DMS-2101911. Kiumars Kaveh was partially supported by a National Science Foundation Grant DMS-2101843 and a Simons Collaboration Grant. Chandranandan would like to thank Ankit Rai for helpful conversations. Finally, we thank Hendrik S\"u\ss\ and Nathan Ilten for useful comments.

\section{Background material}  \label{sec-prelim}
\subsection{Toric schemes over a DVR}
\label{subsec-toric-schemes}
Let $\mathcal{O}$ be a discrete valuation ring with fraction field $K$ and discrete valuation $\val: K \to \overline{\Z}:=\Z \cup 
\{\infty\}$. Let $\varpi$ be a uniformizer for $\mathcal{O}$, that is, a generator of the principal maximal ideal $\mathfrak{m}$ of $\mathcal{O}$, and we let $\bf k = \mathcal{O}/\mathfrak{m}$ be the residue field. We denote by $s$ and $\eta$ the special and generic points of $S=\Spec(\mathcal{O})$, corresponding to the maximal ideal $\mathfrak{m}$ and the prime ideal $(0)$, respectively. For a scheme $\X$ over $S$, we denote by $\X_s = \X \times_{S}\Spec(\bk)$ and by $\X_{\eta} = \X \times_S\Spec(K)$ the special and generic fiber, respectively.

We let $T$ be a split torus over $S$ of relative dimension $n$. We denote by $T_{\eta}$ and by $T_s$, the base change to $\operatorname{Spec}(K)$ and to $\operatorname{Spec}(\bk)$, respectively. We let $M$ be the character lattice and $N$ the cocharacter lattice of the torus $T_\eta \cong (K^*)^n$. We let $\widetilde{M} = M \times \Z$ and $\widetilde{N} = N \times \Z$. 

A \emph{toric scheme} $\X$ over $S$ of relative dimension $n$ is a normal integral separated scheme of finite type, equipped with a dense open embedding $T_{\eta} \hookrightarrow \mathcal{X}_{\eta}$ and an $S$-action of $T$ over $\mathcal{X}$ that extends the action of $T_{\eta}$ on itself by translations. The image of the identity point of $T_{\eta}$ in $\X$ is denoted by $x_0$. We note that if $\mathcal{X}$ is a toric scheme over $S$, then $\mathcal{X}_{\eta}$ is a toric variety over $K$ with torus $T_{\eta}$.

We briefly recall the classification of toric schemes over $S$ in terms of complete, strongly convex, rational polyhedral complexes (we refer to \cite[Section 3.5]{BPS} for details on the classification and to \cite[Section 2.1]{BPS} for definitions regarding strongly convex, rational polyhedral complexes and fans).

For the rest of the paper, by a cone, we always mean a polyhedral cone. Also, when we say cone/polyhedron/fan/polyhedral complex, we mean always a strongly convex, rational cone/polyhedron/fan/polyhedral complex. 

Let $\Sigma$ be a fan in $N_{\Q} \times \Q_{\geq 0}$. We denote by $\Sigma_1$ the polyhedral complex in $N_{\Q} \times \{1\}$ obtained by intersecting all the cones in $\Sigma$ by $N_{\Q} \times \{1\}$. 
Similarly, we denote by $\Sigma_0$ the fan in $N_{\Q} \times \{0\}$ obtained by intersecting all the cones in $\Sigma$ by $N_{\Q} \times \{0\}$.

\begin{remark}\label{polyhedra and cone}
    When $\Sigma$ is complete in $N_{\Q} \times \Q_{\geq 0}$, the fan $\Sigma_0$ coincides with the \emph{tail fan} (also called the \emph{recession fan}) of the polyhedral complex $\Sigma_1$ defined by:
\[
\tail(\Sigma_1) = \left\{\tail(\Delta) \times \{0\} \; | \; \Delta \in \Sigma_1\right\}.
\]
In this case, let $c(\Sigma_1)$ be the fan in $N_{\Q} \times \Q_{\geq 0}$ consisting of the cones over the polyhedra in $\Sigma_1$ together with their tail cones, i.e.
\[
c(\Sigma_1) := \left\{\cone(\Delta) \; | \; \Delta \in \Sigma_1\right\} \cup \tail(\Sigma_1).
\]
We note that $\Sigma_1 \mapsto \Sigma=c(\Sigma_1)$ gives a bijective correspondence between \emph{complete} polyhedral complexes in $N_{\Q}\times \{1\}$ and \emph{complete} fans in $N_{\Q} \times \Q_{\geq 0}$. If $\Sigma$ is not complete, this may not be true (see \cite{BS} for an example). 
\end{remark}

We now explain how to construct a toric scheme from a fan $\Sigma$ in $N_{\Q} \times \Q_{\geq 0}$. As in the case of toric varieties over a field, we do this by first associating an affine toric scheme $\X_{\sigma}$ to a cone $\sigma \in \Sigma$ and gluing them together. 

Let $\sigma \subset N_{\Q} \times \Q_{\geq 0}$ be a cone in $\Sigma$ with dual cone $\sigma^{\vee} \subset \t{M}_{\Q}$. Let $R_{\sigma}$ be the subring of the ring of Laurent polynomials $K[T_{\eta}]$ defined by 
\begin{equation}\label{affine-toric-scheme}
    R_{\sigma} = \mathcal{O}\left[\chi^u \varpi^k \; | \; (u,k) \in \sigma^{\vee}\cap \widetilde{M} \right],
\end{equation}
and let $\U_{\sigma} := \Spec(R_{\sigma})$. When $\sigma$ is the cone over a polyhedron $\Delta$ in $N_{\mb Q} \times \{1\}$, we also denote $R_{\sigma}$, $\U_\sigma$ by $R_{\Delta}$, $\U_\Delta$ respectively. 

As usual, one constructs the toric scheme $\X_{\Sigma}$ by gluing the affine schemes $\U_{\sigma}$ for $\sigma \in \Sigma$. If $\Sigma = c(\Sigma_1)$ for some complete polyhedral complex $\Sigma_1$ in $N_{\mb Q}\times \{1\}$, then we also denote the toric scheme $\X_{\Sigma}$ by $\X_{\Sigma_1}$. 

The next theorem contains some of the main properties of toric schemes (see \cite[Chap IV]{KKMD} and \cite[Section~3.5]{BPS} for proofs).
\begin{theorem}\label{th-toric-scheme-classification}
We have the following:
\begin{enumerate}
\item The association $\sigma \mapsto \U_{\sigma}$ gives an equivalence of categories between the category of cones in $N_{\mb Q}\times \mb Q_{\geq 0}$ and the category of affine toric schemes over $S$.
\item The association $\Sigma \mapsto \X_{\Sigma}$ gives an equivalence of categories between fans in $N_{\mb Q} \times \mb Q_{\geq 0}$ and toric schemes over $S$. Moreover, $\X_{\Sigma}$ is proper (respectively regular) if and only if $\Sigma$ is complete (respectively smooth). (Recall that $\Sigma$ is said to be smooth if any cone is generated by a subset of a basis of $N \times \Z$.)
\item We have two different types of cones in $\Sigma$: the ones that are contained in $N_{\mb Q} \times \{0\}$ and the ones that are not. The former are in bijective correspondence with the orbits of $T_{\eta}$ in $\X_{\Sigma,\eta}$. In particular, the generic fiber $\X_{\Sigma,\eta}$ is isomorphic to $X_{\Sigma_0}$, the toric variety over $K$ corresponding to the fan $\Sigma_0$. The cones that are not contained in $N_{\Q} \times \{0\}$ are of the form $\cone(\Delta)$ for some $\Delta \in \Sigma_1$ and are in bijective correspondence with the orbits of $T_{s}$ in the special fiber $\X_{\Sigma,s}$. Note that the special fiber $\X_{\Sigma,s}$ has an induced action by $T_{s}$, but, in general, it is not a toric variety over $\bk$. It may not be irreducible or reduced. It is reduced if and only if the vertices of all $\Delta \in \Sigma_1$ are in $N \times \{1\}$. In general, it is a union of toric varieties (with multiplicities) corresponding to the polyhedra in $\Sigma_1$, which intersect according to the combinatorics of $\Sigma_1$. This correspondence is order reversing. In particular, vertices in $\Sigma_1$ correspond to irreducible components of the special fiber, and two such components intersect if and only if there is a polyhedron in $\Sigma_1$ containing both vertices.  
\end{enumerate}
\end{theorem}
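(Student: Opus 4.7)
The plan is to establish the three parts in sequence, mimicking the construction of toric varieties over a field but working relatively over $S = \Spec(\mathcal{O})$. The main new feature compared with the classical case is the appearance of the uniformizer $\varpi$ as a coordinate in the extended lattice $\widetilde{M} = M \times \Z$, which produces the $\Q_{\geq 0}$ direction in $\widetilde{N}_\Q = N_\Q \times \Q_{\geq 0}$. I would first prove (1), then bootstrap to (2) by gluing, and finally analyze the generic and special fibers separately for (3).

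For (1), the first step is to verify that $R_\sigma$ is a finitely generated, flat, normal $\mathcal{O}$-algebra. Finite generation follows from Gordan's lemma applied to the monoid $\sigma^\vee \cap \widetilde{M}$; flatness over $\mathcal{O}$ is immediate because $R_\sigma$ is free on the monomials $\chi^u \varpi^k$; and normality follows from the standard identification of the integral closure of a monoid algebra with the saturation of the monoid, together with the fact that $\sigma^\vee \cap \widetilde{M}$ is already saturated since $\sigma$ is rational. The $T$-action comes from the natural $M$-grading of $R_\sigma$. To build the inverse functor, given an affine toric scheme $\mathcal{U}$ with an $S$-action of $T$ and $T_\eta \hookrightarrow \mathcal{U}_\eta$, I would extract the monoid of characters of $T_\eta$ together with $\varpi$-powers that extend to global sections on $\mathcal{U}$; this is a saturated submonoid of $\widetilde{M}$, and its dual in $\widetilde{N}_\Q$ is the cone $\sigma$. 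Morphisms of cones correspond to morphisms of monoids, hence of $\mathcal{O}$-algebras, giving an equivalence.

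For (2), I would glue the affine schemes $\mathcal{U}_\sigma$ along overlaps $\mathcal{U}_{\sigma_1 \cap \sigma_2} = \mathcal{U}_\tau$ when $\tau$ is a common face, using that the inclusion of a face corresponds to localization at a monomial (whose dual separating hyperplane defines the face). Properness follows from the valuative criterion: any valuation of $K(\X_\Sigma)$ that restricts to a cocharacter direction in $\widetilde{N}_\Q$ admits a center in some $\mathcal{U}_\sigma$ precisely when that direction lies in $|\Sigma|$, which is $\widetilde{N}_\Q$ iff $\Sigma$ is complete. For regularity, the claim is affine-local: $\mathcal{U}_\sigma$ is regular iff the monoid $\sigma^\vee \cap \widetilde{M}$ is free, iff $\sigma$ is generated by part of a basis of $N \times \Z$, which is the smoothness condition on the fan.

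For (3), the generic fiber is obtained by inverting $\varpi$ in $R_\sigma$, and a monomial character $\chi^u$ survives as a regular function iff $(u,k) \in \sigma^\vee$ for some $k \in \Z$, which happens iff $u$ pairs non-negatively with every $v$ such that $(v,0) \in \sigma$; thus $\mathcal{U}_{\sigma,\eta} = X_{\sigma \cap (N_\Q \times \{0\})}$ over $K$, and globally $\X_{\Sigma,\eta} = X_{\Sigma_0}$. The special fiber is $\Spec(R_\sigma/\varpi R_\sigma)$, and for cones $\sigma = \cone(\Delta)$ of the second type the quotient is graded by the faces of $\Delta$; the orbit-cone correspondence within each fiber is then the classical toric one applied component-wise, yielding the bijection between polyhedra of $\Sigma_1$ and $T_s$-orbits. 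The most delicate step is tracking multiplicities: for a vertex $v \in \Sigma_1$ the corresponding component appears with multiplicity equal to the smallest $m \geq 1$ with $mv \in N \times \{1\}$, so the special fiber is reduced precisely when every vertex of every $\Delta \in \Sigma_1$ is a lattice point, and the intersection combinatorics are order-reversing by the usual face-localization argument. This bookkeeping on $\Sigma_1$, as opposed to $\Sigma_0$, is the main point where the toric scheme story diverges from the classical toric variety case and is where I would expect the most care to be needed.
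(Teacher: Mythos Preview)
The paper does not actually prove this theorem: it is stated as background and the reader is referred to \cite[Chap.~IV]{KKMD} and \cite[Section~3.5]{BPS} for proofs. So there is no argument in the paper to compare your proposal against.

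That said, your sketch is a faithful outline of the standard proof found in those references, and the overall architecture --- build the affine pieces, glue, then read off the fibers from $\Sigma_0$ and $\Sigma_1$ --- is exactly how the cited sources proceed. A couple of points would need tightening if this were to be written out in full. First, in (1), saying $R_\sigma$ is ``free on the monomials $\chi^u\varpi^k$'' is slightly loose: distinct pairs $(u,k)$ with the same $u$ are $\mathcal{O}$-multiples of one another, so the correct statement is that $R_\sigma$ is free on $\{\chi^u\varpi^{k_0(u)}\}$ where $k_0(u)$ is the minimal $k$ with $(u,k)\in\sigma^\vee$; this still gives flatness. Second, in the inverse functor you should note that the condition that $\mathcal{U}$ is an $S$-scheme forces $(0,1)\in\sigma^\vee$, which is what places the recovered cone inside $N_\Q\times\Q_{\ge 0}$ rather than all of $\widetilde{N}_\Q$. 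Third, the valuative criterion in (2) must be run relatively over $S$; in practice one reduces to extensions of $\mathcal{O}$ and identifies centers via the pairing with $\widetilde{N}$, which is how \cite{KKMD} handles it. None of these are gaps in the approach, just places where a complete write-up would require care.
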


\subsection{Tits and Bruhat-Tits buildings of $\GL(r)$}  \label{subsec-building}
In this section we review some background material about the Tits and Bruhat-Tits buildings of the general linear group (following \cite{KMT}) as needed for the classification of toric vector bundles over a toric scheme.

As before, let $\val: K \to \overline{\Z}$ be a discretely valued field with valuation ring $\mc O$ and residue field $\bk$. Let $E$ be an $r$-dimensional vector space over $K$.

We consider two kinds of valuations on $E$: the ones that extend the trivial valuation on $K$, and the ones that extend $\val$. We incorporate both in the following definition.

\begin{definition}[Level $m$ additive norm]  \label{def-level-m-add-norm}
For $m \geq 0$, we call a function $\w: E \to \overline{\Q} = \Q \cup \{\infty\}$ a {\it level $m$ additive norm} if the following hold:
\begin{itemize}
	\item[(1)] For all $e \in E$ and $0 \neq \lambda \in K$ we have $\w(\lambda e) = m \val(\lambda) + \w(e)$. 
	\item[(2)] For all $e_1, e_2 \in E$, the non-Archimedean inequality $\w(e_1+e_2) \geq \min\{\w(e_1), \w(e_2)\}$ holds.
	\item[(3)] $\w(e) = \infty$ if and only if $e=0$. 
\end{itemize}
We refer to an additive norm of level $1$ simply as an \emph{additive norm},  and we call an additive norm of level $0$ a \emph{valuation}.
\end{definition}

\begin{definition}[Extended buildings]\label{def-ext-build}
The following objects play a central role in the paper:
\begin{itemize}
\item[(a)] We denote the set of all additive norms on $E$ by $\t{\B}_\aff(E)$ and call it the \emph{extended Bruhat--Tits building of $E$}. 
\item[(b)] We denote the set of all valuations on $E$ by $\t{\B}_\sph(E)$ and call it the \emph{extended Tits building of $E$} (or the \emph{cone over the Tits building of $E$}).
\item[(c)] For $m \geq 0$, we denote the set of all level $m$ additive norms by $\t{\B}_m(E)$. Clearly $\t{\B}_1(E) = \t{\B}_\aff(E)$ and $\t{\B}_0(E) = \t{\B}_\sph(E)$. We let $$\t{\B}(E) = \bigcup_{m \geq 0} \t{\B}_m(E),$$
and call it the \emph{total extended building} of $E$.
\end{itemize}
\end{definition}

\begin{remark}
We note that $\t{\B}(E)$ comes with a natural action of the multiplicative semigroup $\Q_{\geq 0}$: if $\w: E \to \overline{\Q}$ is a level $m$ additive norm and $k \geq 0$ then the function $k \w$ is a level $km$ additive norm. 
\end{remark}

A \emph{frame} is a direct sum decomposition of $E = \bigoplus_{i=1}^r L_i$ into one-dimensional subspaces $L_i$. In other words, frames correspond to vector space bases up to multiplication of basis elements by nonzero scalars. 

We say that a level $m$ additive norm $\w: E \to \overline{\Q}$ is \emph{adapted} to a frame $L=\{L_1, \ldots, L_r\}$ for $E$ if the following holds. For any $e = \sum_i e_i$, $e_i \in L_i$, we have: 
$$\w(e) =  \min\{ \w(e_i) \mid i=1, \ldots, r\}.$$
In other words, if $B = \{b_1, \ldots, b_r\}$ is a basis with $b_i \in L_i$, then for any $e = \sum_i \lambda_i b_i$ we have: $$\w(e) = \min\{ m\val(\lambda_i) + \w(b_i) \mid i=1, \ldots, r\}.$$ That is, the additive norm $\w$ is determined by its values on the basis $B$. 

\begin{definition}[Extended apartment] \label{def-ext-apt}
Let $L = \{L_1, \ldots, L_r\}$ be a frame for $E$. We denote the collection of all level $m$ additive norms that are adapted to $L$ by $\t{A}_m(L)$ and call it the extended level $m$ apartment of $L$. We put $\t{A}(L) = \bigcup_{m \geq 0} \t{A}_m(L)$. We also set 
$\t{A}_\sph(L) = \t{A}_0(L)$ and $\t{A}_\aff(L) = \t{A}_1(L)$.
If $B$ is a basis for $E$ spanning a frame $L$, we also denote the extended apartments corresponding to $L$ by $\t{A}(B)$, $\t{A}_\sph(B)$ and $\t{A}_\aff(B)$ respectively.
\end{definition}

\begin{remark}  \label{rem-ext-bldg-axioms}
 One shows that, for any $m>0$, any two level $m$ additive norms lie in the same extended apartment (this is, in fact, one of the defining axioms of an abstract building). In particular, $\t{\B}_m(E)$ is a union of extended apartments $\t{A}_m(L)$ (see \cite[Proposition 1.21]{RTW}).
\end{remark}

\begin{definition}
An $\mathcal{O}$-lattice in $E$ is a full rank $\mathcal{O}$-submodule $\Lambda \subset E$.
\end{definition}
The collection of all $\mathcal{O}$-lattices in $E$ is usually called the \emph{affine Grassmannian} of $\GL(E)$ and plays an important role in representation theory. To every $\mathcal{O}$-lattice $\Lambda$ there corresponds a $\Z$-valued additive norm $\w_\Lambda: E \to \overline{\Z}$ defined by $\w_\Lambda(e) = \max\{j \mid e \in \varpi^j \Lambda\}$. Conversely, if $\w$ is an integer-valued additive norm, $\Lambda_\w = E_{\w \geq 0} = \{e \in E \mid \w(e) \geq 0\}$ is an $\mathcal{O}$-lattice. One verifies that $\Lambda \mapsto \w_\Lambda$ and $\w \mapsto \Lambda_\w$ give a one-to-one correspondence between the set of $\mathcal{O}$-lattices in $E$ and the set of integer-valued additive norms. We think of integer-valued additive norms as lattice points in $\t{\B}_\aff(E)$.

\begin{definition}
Let $B = \{b_1, \ldots, b_r\}$ be a basis for $E$ spanning a frame $L$. One sees that for an $\mathcal{O}$-lattice $\Lambda$, the corresponding additive norm $\w_\Lambda$ is adapted to $L$ if and only if $$\Lambda = \sum_i \varpi^{a_i} b_i,$$ for some $a_i \in \Z$. We then say that $\Lambda$ is \emph{adapted} to the frame $L$ (or the basis $B$). 
\end{definition}

\begin{remark}[Geometric realizations of the Tits and Bruhat--Tits buildings]  \label{rem-realization-space-bldg}
One can give natural constructions of the geometric realization of the Tits building and Bruhat--Tits building of $\GL(E)$ from the spaces $\t{\B}_\sph(E)$ and $\t{\B}_\aff(E)$ as follows:
\begin{itemize}
    \item[(i)] Let us say that two valuations $\w$ and $\w'$ on $E$ are equivalent, written $\w \sim \w'$, if there exists $m>0$ and $c \in \Q$ such that $\w' = m\w + c$. Let $\mathcal{C}$ denote the set of all constant valuations, that is, valuations that have the same value on all nonzero vectors. Then one can show that the quotient space $\B_\sph(E) = (\t{\B}_\sph(E) \setminus  \mathcal{C}) / \sim$ can be identified with the geometric realization of the Tits building of $\GL(E)$. The apartments in $\B_\sph(E)$ are obtained from the corresponding quotients of extended apartments in $\t{\B}_\sph(E)$.
    \item[(ii)] Let us say that two additive norms $\w$ and $\w'$ on $E$ are equivalent, written $\w \sim \w'$, if there exists $c \in \Q$ such that $\w' = \w + c$. Then one can show that the quotient space $\B_\aff(E) = \t{\B}_\aff(E) / \sim$ can be identified with the geometric realization of the Bruhat--Tits building of $\GL(E)$. The apartments in $\B_\aff(E)$ are obtained from the quotients of extended apartments in $\t{\B}_\aff(E)$.
\end{itemize}
\end{remark}

As is well known in the theory of buildings, there are two ways one can associate spherical buildings to an affine building: (1) as the boundary at infinity of the affine building, and (2) as the link of any vertex.
The item (1) is related to the fact that the limit of $\t{\B}_m(E)$, as $m \to 0$, is $\t{\B}_\sph(E)$. The item (2) says that the \emph{link} of any vertex in an affine building (regarded as a simplicial complex) is a spherical building. The following proposition is a manifestation of this fact in the case of the Bruhat-Tits building of $\GL(E)$. For the proof, see \cite[Proposition 5.9]{BKM}. It is used in Theorem \ref{th-rest-irr-comp-sp-fiber}.

\begin{proposition} \label{prop-link-vertex}
Let $\Lambda$ be an $\mc O$-lattice in $E$ representing a vertex in the extended building $\t{\mf B}_\aff(E)$. Let $E_\Lambda = \Lambda \otimes_\mc O \bk$ be the corresponding $\bk$-vector space. 
\begin{itemize}
    \item[(a)]
A neighborhood $U_\Lambda$ of $\Lambda$ in the extended Bruhat--Tits building $\t{\B}_\aff(E)$ can naturally be identified with a neighborhood $U_\sph$ of the origin in the extended Tits building $\t{\B}_\sph(E_\Lambda)$.
\item[(b)]  There is a one-to-one correspondence between the extended apartments in $\t{\B}_\aff(E)$ that contain $\Lambda$ and the extended apartments in $\t{\B}_\sph(E_\Lambda)$, and under the identification of $U_\Lambda$ and $U_\sph$, apartments containing $\Lambda$ go to apartments. 
\end{itemize}
\end{proposition}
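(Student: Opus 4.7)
My plan is to define the identification $\Phi: U_\Lambda \to U_\sph$ by an intrinsic formula depending only on $\w$ and $\w_\Lambda$, and to read off the apartment-wise structure from the set of $\mc O$-bases of $\Lambda$.

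First I would characterize the apartments of $\t{\B}_\aff(E)$ through the vertex $\w_\Lambda$. An extended apartment $\t{A}_\aff(L)$ contains $\w_\Lambda$ if and only if $\w_\Lambda$ is adapted to $L$, which amounts to $\Lambda = \bigoplus_i (\Lambda \cap L_i)$; equivalently, such apartments are parameterized by $\mc O$-bases $B = \{b_1, \dots, b_r\}$ of $\Lambda$ (up to unit scalings). Reduction modulo $\varpi \Lambda$ yields a basis $\{\bar{b}_1, \dots, \bar{b}_r\}$ of $E_\Lambda$ and a frame $\bar{L}$ of $E_\Lambda$; by Nakayama's lemma every frame of $E_\Lambda$ lifts to such a basis of $\Lambda$, and different bases sharing the same reduction differ by units in $\mc O$ and yield the same apartment. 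This already supplies the bijection between apartments of $\t{\B}_\aff(E)$ through $\w_\Lambda$ and apartments of $\t{\B}_\sph(E_\Lambda)$.

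Next, on each such apartment the coordinate system $\w \mapsto (\w(b_i))_i$ identifies $\t{A}_\aff(L)$ with $\Q^r$, and similarly $v \mapsto (v(\bar{b}_i))_i$ identifies $\t{A}_\sph(\bar{L})$ with $\Q^r$. Composing these yields a linear bijection $\phi_L: \t{A}_\aff(L) \to \t{A}_\sph(\bar{L})$ sending $\w_\Lambda$ (all coordinates $0$) to the trivial valuation, i.e., the origin of $\t{\B}_\sph(E_\Lambda)$. I would then exhibit the intrinsic description: whenever $\w \in \t{A}_\aff(L)$ satisfies the spread bound $\max_i \w(b_i) - \min_i \w(b_i) < 1$,
\begin{equation*}
\phi_L(\w)(\bar{e}) \;=\; \w(e), \qquad \text{for any lift } e \in \Lambda \setminus \varpi \Lambda \text{ of } \bar{e}.
\end{equation*}
Well-definedness under this bound is a direct check: writing $e = \sum \lambda_i b_i$ with $\lambda_i \in \mc O$, the term $\val(\lambda_i) + \w(b_i)$ equals $\w(b_i)$ when $\lambda_i$ is a unit and is at least $1 + \min_j \w(b_j) > \max_i \w(b_i)$ when $\lambda_i \in \varpi \mc O$, so $\w(e) = \min\{\w(b_i) : \lambda_i \in \mc O^*\}$, which depends only on $\bar{e}$. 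I then take $U_\Lambda$ to be the set of $\w$ lying in some apartment through $\w_\Lambda$ with spread $< 1$, and define $\Phi: U_\Lambda \to \t{\B}_\sph(E_\Lambda)$ by the formula above.

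Because the formula refers only to $\w$ and $\w_\Lambda$, the maps $\phi_L$ agree on overlaps $\t{A}_\aff(L) \cap \t{A}_\aff(L')$ and $\Phi$ is well defined on all of $U_\Lambda$. For the inverse, given a valuation $v$ on $E_\Lambda$ adapted to a frame $\bar{L}$ with analogous spread $< 1$, I would lift $\{\bar{b}_i\}$ to an $\mc O$-basis $\{b_i\}$ of $\Lambda$ by Nakayama and take $\w$ to be the additive norm adapted to $L = \{K b_i\}$ with $\w(b_i) = v(\bar{b}_i)$; a direct computation gives $\Phi(\w) = v$. Setting $U_\sph$ equal to the image then finishes part (a), and part (b) follows from the first paragraph together with $\Phi|_{\t{A}_\aff(L) \cap U_\Lambda} = \phi_L$. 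The hardest part is the well-definedness of the intrinsic formula, which is precisely what forces the statement to be local: only under the spread bound does the non-Archimedean inequality guarantee that the difference $\varpi e''$ between two lifts of $\bar{e}$ has $\w$-value strictly greater than $\w(e)$.
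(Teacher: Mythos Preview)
The paper does not prove this proposition itself; it simply refers the reader to an external reference for the argument. Your approach via the intrinsic formula $\Phi(\w)(\bar e)=\w(e)$ for any lift $e\in\Lambda\setminus\varpi\Lambda$ is a clean and correct way to build the identification in part~(a), and your verification that this is well defined once the spread $\max_i\w(b_i)-\min_i\w(b_i)$ is below $1$ is right.

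There is, however, a genuine error in your argument for the bijection in part~(b). You assert that ``different bases sharing the same reduction differ by units in $\mc O$ and yield the same apartment.'' This is false. If $\{b_1,b_2\}$ is an $\mc O$-basis of $\Lambda$, then $\{b_1+\varpi b_2,\,b_2\}$ is another $\mc O$-basis with the same reduction modulo $\varpi\Lambda$, yet $K(b_1+\varpi b_2)\neq Kb_1$, so these two bases span distinct frames of $E$ and hence give distinct extended apartments of $\t{\B}_\aff(E)$ through $\w_\Lambda$. In general the fibers of the reduction map on $\mc O$-bases of $\Lambda$ are cosets of $I+\varpi M_r(\mc O)$, not merely of the diagonal units. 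Thus the map from apartments through $\Lambda$ to apartments of $\t{\B}_\sph(E_\Lambda)$ is surjective (by Nakayama, as you say) but not injective.

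What is true, and what your construction actually establishes, is the germ-level statement: any two apartments through $\Lambda$ with the same reduction coincide on $U_\Lambda$. This follows immediately from your intrinsic formula, since $\Phi$ does not refer to the choice of $L$; in the rank-two example above one checks directly that the two apartments agree on the half-space $\w(b_1)-\w(b_2)\leq 1$, which contains the spread-$<1$ region. So the correspondence in~(b) should be read as a bijection between the sets $\{\t{A}_\aff(L)\cap U_\Lambda:\w_\Lambda\in\t{A}_\aff(L)\}$ and $\{\t{A}_\sph(\bar L)\cap U_\sph\}$, and for this your argument is complete once you replace the unit-scaling claim by an appeal to the intrinsicness of $\Phi$. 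It is also worth recording that the spread is itself intrinsic: for any $\mc O$-basis $\{b_i\}$ of $\Lambda$ to which $\w$ is adapted, $\max_i\w(b_i)-\min_i\w(b_i)$ equals $\sup\{\w(e):e\in\Lambda\setminus\varpi\Lambda\}-\inf\{\w(e):e\in\Lambda\setminus\varpi\Lambda\}$, so $U_\Lambda$ is unambiguously defined.
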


The above facts make appearances in Section \ref{sec-main} when we describe the fibers of an equivariant vector bundle $\E$ on a complexity-one $T$-variety $X=X(\mc S)$, over the generic point of $Y$ (i.e. over the field of fractions $K = \bk(Y)$) and over a $\bf k$-point $P$ of $Y$. 

\subsection{Equivariant vector bundles on toric schemes} \label{subsec-tvbs-toric-scheme}
We start by reviewing some of the main definitions needed for stating the classification of toric vector bundles over a DVR from \cite{KMT}. 

Let $\Sigma_0$ be a fan in $N_\Q = N_\Q \times \{0\}$.
\begin{definition}[Piecewise linear map to the extended Tits building]
We say that a map $\Phi_0: |\Sigma_0| \to \t{\B}_\sph(E)$ is a \emph{piecewise linear map} if the following hold:
\begin{itemize}
\item[(a)] For each cone $\sigma \in \Sigma_0$, there is a basis $B_\sigma \subset E$ (not necessarily unique) such that $\Phi_0(\sigma)$ lies in the extended apartment $\t{A}_\sph(B_\sigma)$.
\item[(b)] ${\Phi_0}|_{\sigma}$ is the restriction of a linear map from the linear span of $\sigma$ to $\t{A}_\sph(B_\sigma)$. \end{itemize}
We say that $\Phi_0$ is an \emph{integral} piecewise linear map if for each $\sigma \in \Sigma$, ${\Phi_0}|_{\sigma}$ is the restriction of an integral linear map from $N_\Q$ to $\t{A}_\sph(\sigma)$. Here, integral means it sends lattice points to lattice points.  
\end{definition}

\begin{definition}[Morphism of piecewise linear maps]  \label{def-morphism-PL}
Let $E$ and $E'$ be finite dimensional vector spaces. Let $\Phi_0: \Sigma_0 \to \t{\B}_\sph(E)$, $\Phi'_0: \Sigma_0 \to \t{\B}_\sph(E')$ be piecewise linear maps. A \emph{morphism} from $\Phi_0$ to $\Phi'_0$ is a linear map $F: E \to E'$ such that for all $x \in \Sigma_0$ and $e \in E$, we have $\Phi_0(x)(e) \leq \Phi'_0(x)(F(e))$.
\end{definition}

Next, let $\Sigma$ be a fan in $N_\Q \times \Q_{\geq 0}$. Recall that $\Sigma_1$ denotes the intersection of $\Sigma$ with $N_\Q \times \{1\}$. It is a polyhedral complex in $N_\Q \times \{1\}$.

\begin{definition}[Graded piecewise linear map to the total extended building]
We say that a map $\Phi: |\Sigma| \to \t{\B}(E)$ is a \emph{graded piecewise linear map} if the following hold:
\begin{itemize}
\item[(a)] For any $m \geq 0$ and $(x, m) \in |\Sigma| \subset N_\Q \times \Q$ we have $\Phi(x, m) \subset \t{\B}_m(E)$. That is, $\Phi$ sends level $m$ points to level $m$ points.
\item[(b)] For each cone $\sigma \in \Sigma$, there is a basis $B_\sigma \subset E$ (not necessarily unique) such that $\Phi(\sigma)$ lies in the extended apartment $\t{A}(B_\sigma)$.
\item[(c)] $\Phi|_{\sigma}$ is the restriction of a linear map from the linear span of $\sigma$ to $\t{A}(B_\sigma)$  
\end{itemize}
We say that $\Phi$ is an \emph{integral} graded piecewise linear map if for each $\sigma \in \Sigma$, $\Phi|_{\sigma}$ is the restriction of an integral linear map from $N_\Q \times \Q_{\geq 0}$ to $\t{A}(\sigma)$. 
\end{definition}

The notion of morphism of graded linear maps is defined in the same fashion as in Definition \ref{def-morphism-PL}.

Let $\Phi: |\Sigma| \to \t{\B}(E)$ be a piecewise linear map. Then 
the restriction $\Phi_1: |\Sigma_1| \to \t{\B}_\aff(E)$ of $\Phi$ to $|\Sigma_1| = |\Sigma| \cap (N_\Q \times \{1\})$ is a piecewise affine map in the following sense:
\begin{definition}[Piecewise affine map to $\t{\B}_\aff(E)$]\label{def-PA-map}
Let $\Sigma_1$ be a polyhedral complex in the affine space $N_\Q \times \{1\}$.
A map $\Phi_1: |\Sigma_1| \to \t{\B}_\aff(E)$ is a \emph{piecewise affine} map if the following holds:
\begin{itemize}
\item[(a)] For any polyhedron $\Delta \in \Sigma_1$, there is an extended apartment $\t{A}_\aff(\Delta)$ (not necessarily unique) such that the restriction ${\Phi_1}|_{\Delta}$ maps $\Delta$ into $\t{A}_\aff(\Delta)$.
\item[(b)] For any $\Delta \in \Sigma_1$, ${\Phi_1}|_{\Delta}$ is the restriction of an affine map from $N_\Q \times \{1\}$ to the affine space $\t{A}_\aff(\Delta)$.
\end{itemize}
We say that $\Phi$ is an \emph{integral} piecewise affine map if
for each $\Delta \in \Sigma_1$, ${\Phi_1}|_{\Delta}$ is the restriction of an integral affine map from $N_\Q \times \{1\}$ to $\t{A}_\aff(\Delta)$.
\end{definition}

Let $\Phi_1: |\Sigma_1| \to \t{\B}(E)$ be an integral piecewise affine map. For any polyhedron $\Delta \in \Sigma_1$, the requirement that $\Phi_{1, \Delta} := {\Phi_1}|_{\Delta}: \Delta \to \t{A}_\aff(\Delta)$ is an integral affine map means that there exists a $K$-basis $B_\Delta = \{b_{\Delta, 1}, \ldots, b_{\Delta, r}\}$ for $E$ and $u(\Delta) = \{u_{\Delta, 1}, \ldots, u_{\Delta, r}\} \subset M$ such that for any $x \in \sigma$, $\Phi_{1, \Delta}(x)$, as an additive norm on $E$, is given by:
\begin{equation}   \label{equ-Phi-sigma} 
\Phi_{1, \Delta}(x)\left(\sum_i \lambda_i b_{\Delta, i}\right) = \min\{ \val(\lambda_i) + \langle u_{\Delta, i}, x \rangle \mid i=1, \ldots, r\}.
\end{equation}

Finally, we recall the classification of toric vector bundles over a DVR from \cite{KMT}. 
\begin{theorem}[Classification of toric vector bundles over a DVR]  \label{th:vb-dvr}
To each toric vector bundle $\E$ over a toric scheme $\X_\Sigma$ there corresponds a graded piecewise linear map $\Phi_\E$ from $|\Sigma|$ to the total extended building $\t{\B}(E)$. Equivalently, when $\Sigma_0$ is the recession fan of $\Sigma_1$, to $\E$ we can associate the piecewise affine map $\Phi_{\E, 1} = {\Phi_{\E}}|_{|\Sigma_1|}: |\Sigma_1| \to \t{\B}_1(E) = \t{\B}_\aff(E)$. Moreover, this gives rise to an equivalence of categories. Furthermore, the restriction $\Phi_{\E, 0} = {\Phi_{\E}}|_{|\Sigma_0|} \colon |\Sigma_0| \to \t{\B}_0(E) = \t{\B}_{\sph}(E)$ is the piecewise linear map corresponding to the restriction of $\E$ to the generic fiber. 
\end{theorem}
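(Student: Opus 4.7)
The plan is to mirror the classical Klyachko--Kaneyama classification of toric vector bundles over a field, extending it to the DVR setting by replacing extended apartments of the Tits building with extended apartments of the Bruhat--Tits building. First I would reduce the problem to a local one: since $\X_\Sigma$ is obtained by gluing the affine toric schemes $\U_\sigma$ along overlaps $\U_{\sigma \cap \tau}$, specifying an equivariant vector bundle on $\X_\Sigma$ is equivalent to specifying an equivariant vector bundle on each $\U_\sigma$ together with a compatible cocycle of equivariant identifications on overlaps. Note also that $\E$ restricted to the open torus $T_\eta \subset \X_\Sigma$ is equivariantly trivial, so the fiber $E := \E|_{x_0}$ at the identity point of $T_\eta$ provides a fixed $K$-vector space in which all apartments will be built.

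The heart of the argument is the following local splitting: for each cone $\sigma \in \Sigma$, the restriction $\E|_{\U_\sigma}$ admits a $T$-equivariant decomposition into equivariant line bundles,
\[
\E|_{\U_\sigma} \;\cong\; \bigoplus_{i=1}^{r} \mc{L}(u_{\sigma,i}), \qquad u_{\sigma,i} \in \t{M}.
\]
To establish this, I would analyze $\Gamma(\U_\sigma, \E)$ as a $\t{M}$-graded module over the $\t{M}$-graded ring $R_\sigma = \mc{O}[\chi^{u}\varpi^{k} : (u,k) \in \sigma^\vee \cap \t{M}]$, decomposing into isotypic components and using local freeness together with the semigroup structure of $\sigma^\vee$ to produce a rank-one equivariant summand for each weight appearing in the generic fiber. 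The splitting determines a frame $B_\sigma = \{b_{\sigma,1}, \ldots, b_{\sigma,r}\}$ of $E$ and a tuple $u_\sigma = (u_{\sigma,1},\ldots,u_{\sigma,r})$, from which I define $\Phi_{\E,\sigma}\colon \sigma \to \t{A}(B_\sigma)$ by the formula \eqref{equ-Phi-sigma}. By construction $\Phi_{\E,\sigma}$ is integral linear and sends level-$m$ points of $\sigma$ to level-$m$ additive norms, and restriction to $\sigma \cap (N_\Q \times \{1\})$ yields the piecewise affine map $\Phi_{\E,1,\Delta}$.

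Next I would check compatibility on faces: if $\tau$ is a face of $\sigma$, the equivariant splitting over $\U_\sigma$ pulls back to an equivariant splitting over $\U_\tau$, and while the chosen splittings on $\sigma$ and $\tau$ may differ, the resulting additive norms at any common point of $\sigma \cap \tau$ agree because they compute the same intrinsic invariant of $\E$ (e.g.\ a valuation of equivariant sections). This assembles the local data into a well-defined integral graded piecewise linear map $\Phi_\E\colon |\Sigma| \to \t{\B}(E)$. For the inverse, given such a map $\Phi$, I would read off $(B_\sigma, u_\sigma)$ from each cone, set $\E_\sigma := \bigoplus_i \mc{L}(u_{\sigma,i})$ on $\U_\sigma$, and use the common fiber identifications (forced by the fact that all $B_\sigma$ are frames of the same $E$) to produce gluing isomorphisms. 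Functoriality is captured by the inequality in Definition \ref{def-morphism-PL}: a $K$-linear map $F\colon E \to E'$ between generic fibers extends to an equivariant morphism over $\U_\sigma$ precisely when $\Phi_\E(x)(e) \le \Phi_{\E'}(x)(F(e))$ for all $x \in \sigma$ and $e \in E$, since this inequality is equivalent to $F$ mapping the equivariant module of sections on $\U_\sigma$ into that of $\E'$. The last assertion about the generic fiber is then immediate, as $|\Sigma_0| \subset |\Sigma|$ corresponds to the fan of $\X_{\Sigma,\eta} = X_{\Sigma_0}$ and $\Phi_\E|_{|\Sigma_0|}$ takes values in $\t{\B}_\sph(E)$, recovering the Klyachko data of $\E|_{X_{\Sigma_0}}$.

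The main obstacle is the local splitting step. Over a field the argument hinges on the fact that $R_\sigma$ is a semigroup algebra, but over $\mc{O}$ the extra $\Z$-factor in $\t{M}$ coming from the valuation makes the semigroup $\sigma^\vee \cap \t{M}$ and its associated isotypic decomposition more delicate; in particular, one must track how the uniformizer $\varpi$ interacts with weight spaces so as to still produce a projective cover by rank-one equivariant summands. A secondary subtlety is to show that the tuple $u_\sigma$ (and hence $\Phi_\E|_\sigma$) is canonical, i.e.\ independent of the chosen splitting up to permutation of indices, so that $\Phi_\E$ is genuinely well-defined as a map into the building rather than into a frame-dependent apartment.
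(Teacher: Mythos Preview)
The paper does not prove this theorem: it is stated in the background section as a result recalled from \cite{KMT}, with no proof given here. So there is no ``paper's own proof'' to compare your proposal against.

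That said, your outline is broadly the right strategy and is consistent with how the paper uses the result. The local splitting over each $\U_\sigma$ into equivariant line bundles, the extraction of a frame $B_\sigma$ and weight tuple $u_\sigma$, and the gluing via the common generic-fiber $E$ are exactly the ingredients the paper invokes implicitly (see e.g.\ the references to \cite[Theorem 3.8]{KMT} and \cite[Proposition 2.9]{KMT} later in the paper, and the use of formula \eqref{equ-Phi-sigma}). Your identification of the main obstacle---producing the equivariant rank-one decomposition of $\Gamma(\U_\sigma,\E)$ over the semigroup ring $R_\sigma$ with its extra $\Z$-factor from the uniformizer---is accurate; this is where the DVR case genuinely differs from Klyachko's field case. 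The well-definedness of $\Phi_\E$ (independence of the chosen splitting) is handled in \cite{KMT} by observing that the additive norm $\Phi_\E(x)$ is an intrinsic invariant computed from the graded module structure, not from the particular frame, which is what you gesture at but would need to make precise.
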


A vertex $\nu$ of the polyhedral complex $\Sigma_1$ corresponds to an irreducible component $\X_{s, \nu}$ of the special fiber $\X_{\Sigma, s}$. If $\nu$ is an integral vertex, then this irreducible component is reduced and hence is a toric variety for the action of the residue torus $T_\bk = T \times_{\Spec(\mc O)} \Spec(\bf k)$. The fan of this toric variety is the star fan $\Sigma_\nu$ of the vertex $\nu$ in the polyhedral complex $\Sigma_1$. 

The piecewise linear map associated to the restriction of $\E$ to $\X_{s, \nu}$ can be described by means of the piecewise affine map $\Phi_{\E, 1}$ as follows (see \cite[Theorem 1.2]{BKM}). Suppose $\nu$ is an integral vertex of $\Sigma_1$ and let $\Lambda_\nu$ be the $\mc O$-lattice representing the lattice point $\Phi_1(\nu)$ in $\t{\B}_\aff(E)$. Also as in Proposition 
\ref{prop-link-vertex}, let 
$E_{\Lambda} = \Lambda_\nu \otimes_\mc O \bk$ be the corresponding $\bf k$-vector space.
\begin{theorem}[Restriction to an irreducible component of special fiber]  \label{th-rest-irr-comp-sp-fiber}
The piecewise linear map associated to the toric vector bundle $\E|_{\X_{s, \nu}}$ is the piecewise linear map $\Phi_\nu: |\Sigma_\nu| \to \t{\B}_\sph(E_\Lambda)$ obtained by restricting the piecewise affine map $\Phi_1$ to a sufficiently small neighborhood of the vertex $\nu$ (see Proposition \ref{prop-link-vertex}). 
\end{theorem}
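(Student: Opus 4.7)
The plan is to argue locally on each maximal open of $\X_{\Sigma,s}$ that meets $\X_{s,\nu}$ and then glue. Concretely, fix a polyhedron $\Delta \in \Sigma_1$ having $\nu$ as a vertex; by Theorem \ref{th-toric-scheme-classification}(3), $\X_{s,\nu}$ meets $\U_\Delta$ in an open subset, and these affine opens (as $\Delta$ ranges over such polyhedra) cover $\X_{s,\nu}$. On $\U_\Delta$ the bundle $\E$ is described by the data $(B_\Delta, u(\Delta))$ of \eqref{equ-Phi-sigma}, so the first step is to rewrite this data in a form adapted to the lattice $\Lambda_\nu$ representing $\Phi_1(\nu)$.

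Next, I identify $\Lambda_\nu$ explicitly. Evaluating \eqref{equ-Phi-sigma} at $x=\nu$ gives $\Phi_{1,\Delta}(\nu)(b_{\Delta,i}) = \langle u_{\Delta,i},\nu\rangle$, so the integer-valued additive norm $\Phi_1(\nu)$ corresponds to the $\mc O$-lattice $\Lambda_\nu = \sum_i \varpi^{-\langle u_{\Delta,i},\nu\rangle}\,b_{\Delta,i}\,\mc O$. Rescaling to $b'_{\Delta,i} := \varpi^{-\langle u_{\Delta,i},\nu\rangle} b_{\Delta,i}$ yields an $\mc O$-basis of $\Lambda_\nu$, whose reduction $\{\bar b'_{\Delta,i}\}$ is a $\bk$-basis of $E_\Lambda$. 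Substituting into \eqref{equ-Phi-sigma} and writing $x = \nu + y$ with $y \in \cone_\nu(\Delta)$ (the star cone of $\Delta$ at $\nu$), I obtain
\begin{equation*}
\Phi_{1,\Delta}(\nu+y)\Bigl(\sum_i \mu_i b'_{\Delta,i}\Bigr) \;=\; \min_i\{\val(\mu_i) + \langle u_{\Delta,i}, y\rangle\},
\end{equation*}
which is exactly the level $1$ additive norm on $E$ encoding the \emph{linear} part $y \mapsto \min_i\{\langle u_{\Delta,i},y\rangle: \bar\mu_i \neq 0\}$ on $E_\Lambda$ under the identification $U_\Lambda \cong U_{\sph}$ of Proposition \ref{prop-link-vertex}.

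Now I compare with the Klyachko data of $\E|_{\X_{s,\nu}}$. By construction (see \cite[Chap IV]{KKMD} and the review in Section \ref{subsec-toric-schemes}), $\X_{s,\nu} \cap \U_\Delta$ is the affine toric $\bk$-variety of the cone $\cone_\nu(\Delta) \in \Sigma_\nu$, and reducing the local model of $\E$ on $\U_\Delta$ along $\X_{s,\nu}$ produces the toric vector bundle on $\X_{s,\nu}\cap \U_\Delta$ with Klyachko basis $\{\bar b'_{\Delta,i}\}$ and characters $\{u_{\Delta,i}\}$. This is precisely the piecewise linear map on $\cone_\nu(\Delta)$ obtained by restricting $\Phi_1$ to a neighborhood of $\nu$ in $\Delta$ under Proposition \ref{prop-link-vertex}. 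Varying $\Delta$ among polyhedra containing $\nu$, the compatibility of Klyachko filtrations on overlaps follows from the fact that $\Phi_1$ is a globally defined piecewise affine map and that Proposition \ref{prop-link-vertex}(b) sends apartments through $\Lambda_\nu$ to apartments, so the resulting piecewise linear map on $|\Sigma_\nu|$ assembles consistently into the Klyachko datum $\Phi_\nu$ of $\E|_{\X_{s,\nu}}$.

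The main obstacle is the passage from the affine $K$-linear local model on $\U_\Delta$ (with basis $B_\Delta$ in $E$) to the linear $\bk$-model on $\X_{s,\nu} \cap \U_\Delta$ (with basis $\{\bar b'_{\Delta,i}\}$ in $E_\Lambda$): one must check not only that the rescaled basis actually lies in and generates $\Lambda_\nu$, but that the equivariant structure on the reduction $\Lambda_\nu \otimes_\mc O \bk$ agrees, via the residue torus action, with the one prescribed by the characters $u_{\Delta,i}$. This is where Proposition \ref{prop-link-vertex} does the essential geometric work, linking the affine picture at $\nu$ in $\t{\mf B}_\aff(E)$ with the spherical picture at the origin in $\t{\mf B}_\sph(E_\Lambda)$, and it is the key identity that turns the ``affine shift by $\Phi_1(\nu)$'' in the Bruhat--Tits building into the correct change-of-frame in the Tits building of $\GL(E_\Lambda)$.
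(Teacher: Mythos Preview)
The paper does not supply its own proof of this theorem: it is quoted as background, with the reference ``see \cite[Theorem 1.2]{BKM}'' immediately preceding the statement. So there is nothing in the present paper to compare your argument against.

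That said, your outline is the natural one and matches what one expects the argument in \cite{BKM} to look like. The explicit identification of $\Lambda_\nu$ via the rescaled basis $b'_{\Delta,i} = \varpi^{-\langle u_{\Delta,i},\nu\rangle} b_{\Delta,i}$ is correct (here integrality of $\nu$ is used), and the change of variables $x = \nu + y$ indeed turns the affine formula \eqref{equ-Phi-sigma} into $\min_i\{\val(\mu_i) + \langle u_{\Delta,i}, y\rangle\}$, which under Proposition~\ref{prop-link-vertex} becomes the valuation on $E_\Lambda$ adapted to $\{\bar b'_{\Delta,i}\}$ with values $\langle u_{\Delta,i}, y\rangle$.

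The one place where your write-up is more assertion than proof is the sentence ``reducing the local model of $\E$ on $\U_\Delta$ along $\X_{s,\nu}$ produces the toric vector bundle on $\X_{s,\nu}\cap \U_\Delta$ with Klyachko basis $\{\bar b'_{\Delta,i}\}$ and characters $\{u_{\Delta,i}\}$.'' This is exactly the computation that carries the weight: one must identify the affine coordinate ring of $\X_{s,\nu}\cap \U_\Delta$ as a quotient of $R_\Delta$ (it is $\bk[\cone_\nu(\Delta)^\vee \cap M]$, obtained by sending $\varpi \mapsto 0$ after a suitable $M$-graded twist), check that the sections $\chi^{u_{\Delta,i}} b_{\Delta,i}$ survive this reduction precisely as $\chi^{u_{\Delta,i}} \bar b'_{\Delta,i}$, and verify that the $T_s$-action on the quotient is the one coming from the characters $u_{\Delta,i}$. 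None of this is hard, but it is the content of the theorem, and in a self-contained proof it should be written out rather than declared.
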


\subsection{$T$-varieties} 
   By a T-variety, we mean a normal algebraic variety $X$ with an effective action of a torus $T$. The complexity of the $T$-variety $X$ is defined to be ${\rm dim}~X-{\rm dim}~T$. Toric varieties are $T$-varieties of complexity $0$. Altmann and Hausen extend the classification of toric varieties by polyhedral fans to $T$-varieties of arbitrary complexity. 
    In this subsection, we recall the construction of T-varieties from divisorial fans as done in \cite{AH} and \cite{AHS}. We will focus only on the complexity-one case.

\subsubsection{Proper polyhedral divisors and affine $T$-varieties}
Let $Y$ be a smooth curve (not necessarily projective) over the ground field $\bk$. We will begin by recalling the notion of a proper polyhedral divisor on $Y$ from \cite{AH}.

Let $M$ and $N$ be the character and cocharacter lattices of a torus $T$, respectively. We will denote the natural pairing between $M$ and $N$ by $\langle\cdot, \cdot \rangle$. For a polyhedron $\Delta\subset N_{\mathbb Q}:=N\otimes \mb Q$ we define its tail cone by 
$${\rm Tail}(\Delta):= \{ \, v\in N_{\mb Q} \, | \, v+\Delta\subset \Delta \, \}\,.$$
Let $\sigma$ be a pointed, convex, polyhedral cone in $N\otimes \mathbb Q$. Define
${\rm Pol}^+_{\sigma}(N_{\mathbb Q})$ as the set of all polyhedra in $N_{\mb Q}$ with tail cone $\sigma$. Under the Minkowski addition, the set ${\rm Pol}^+_{\sigma}(N_{\mathbb Q})$ is a commutative semigroup with cancellation and $\sigma$ as the identity element \cite[\S 1]{AH}. We denote the Grothendieck group of 
${\rm Pol}^+_{\sigma}(N_{\mathbb Q})$ by 
${\rm Pol}_{\sigma}(N_{\mathbb Q})$. Let 
$$\sigma^{\vee}=\{\, u \, \in  \, M_{\mb Q}\, |
\, \langle u,v \rangle \, \geq \, 0,~ \forall \, v \, \in \, \sigma \,\}$$ 
be the dual cone of $\sigma$.
Given $u\in \sigma$, we have the evaluation functional ${\rm eval}_u:{\rm Pol}_{\sigma}(N_{\mathbb Q}) \to \mb Q$ defined as follows (\cite[Proposition 1.7(iv)]{AH}). For $\Delta \, \in \, {\rm Pol}^+_{\sigma}(N_{\mathbb Q})$ we put:
\begin{equation}\label{eval}
{\rm eval}_u(\Delta)\, = 
\, {\rm min}\{\, \langle u,v \rangle \, | \, v \, \in \, \Delta \, \}\,.
\end{equation}
The group of \emph{polyhedral divisors} on $Y$ \cite[Definition 2.3]{AH} is defined as
$${\rm Div}_{\mathbb Q}(Y,\sigma):= \, {\rm Pol}_{\sigma}(N)\otimes_{\mb Z} {\rm Div}_{\mb Q}(Y)\,.$$
Here ${\rm Div}(Y)$ is the group of Weil $\mb Q$-divisors on $Y$. 
Any element of ${\rm Div}_{\mb Q}(Y, \sigma)$ is of the form 
$\sum_{P\, \in \, Y} \mf D_P\otimes P$ for 
$\mf D_P\in {\rm Pol}_{\sigma}(N_{\mathbb Q})$, where $\mf D_P=\sigma$ for all but finitely many $P$. For $u\in \sigma^{\vee}$, the evaluation functional ${\rm eval}_u$ above naturally extends to a map
\begin{align}\label{D(u)}
{\rm Div}_{\mb Q}(Y,\sigma) & \to {\rm Div}_{\mb Q}(Y) \\
\mathfrak D \, = \, \sum \mf D_P \otimes P & \mapsto \mathfrak D(u) \, := \sum {\rm eval}_u(\mf D_P)\, P  \,.
\end{align}
\begin{definition}[pp divisor]
    A polyhedral divisor $\mathfrak D$ on $Y$ is called a \emph{proper polyhedral divisor} (\emph{pp divisor} for short) if 
 it satisfies the following conditions:
 \begin{enumerate}
     \item If $\mathfrak D=\sum_{P\, \in \, Y} \mf D_P \otimes P$, where    
           $\mf D_P \in {\rm Pol}^+_{\sigma}(N_{\mathbb Q})$. This expression is unique by \cite[Proposition 1.7]{AH}.
     \item For any $u\in \sigma^{\vee}$ the Weil $\mb Q$-divisor
        $\mathfrak D(u)$ is $\mb Q$-Cartier, semiample and if 
        $u\in {\rm relint}(\sigma^{\vee})$ then $\mathfrak D(u)$ is also big.
 \end{enumerate}
 \end{definition}
 
    We refer to \cite[Example 2.12]{AH} for an equivalent formulation of the above definition.
    
 Next, we recall how to associate an affine $T$-variety to a pp divisor on $Y$. First, we fix the following notation:
Given a divisor $D\in {\rm Div}_{\mb Q}(Y)$, $D=\sum a_i \, D_i$ where $D_i$ are prime divisors, we denote by $\lfloor D \rfloor$ the (integral) Weil divisor $\sum \lfloor a_i \rfloor D_i$. Also, recall 
that we have the sheaf $\mathcal O(D)=\mathcal O(\lfloor D \rfloor)$ which on an open set $V\subset Y$ is given by 
\begin{equation}\label{O(D)}
H^0(V,\mc O(D)) \, = \, \{\, f\, \in \, \bk(Y)\, | \, {\rm div}(f)|_V + D|_V\, \geq 0\} \, = \, H^0(V,\mc O(\lfloor D \rfloor))\,.
\end{equation}
 Here $\bk(Y)$ is the function field of $Y$.

Given a polyhedral divisor $\mathfrak D$ on $Y$ one defines 
$$\mc A:= \bigoplus_{u\in \sigma^{\vee} \cap M} \mathcal{O}(\mathfrak D(u))\,.$$
The sheaf $\mc A$ is a sheaf of $\mc O_Y$-algebras \cite[Section  3]{AH}. Therefore, we can define
two schemes $\widetilde{X}(\mathfrak D)$ and $X(\mathfrak D)$ by setting
$$\widetilde{X}(\mathfrak D):={\underline{\rm Spec}}_Y\bigoplus_{u\in \sigma^{\vee} \cap M} \mathcal{O}(\mathfrak D(u)) \, ={\underline{\rm Spec}}_Y \, \mc A,$$
\begin{equation}\label{X(D)}
X(\mathfrak D):={\rm Spec}\bigoplus_{u\in \sigma^{\vee} \cap M} H^0 (\, Y \, , \, \mathcal{O}(\mathfrak D(u)) \,) \, = \, {\rm Spec}\, H^0(Y,\mc A)\,.
\end{equation}
Here, ${\underline{\rm Spec}}_Y$ denotes the relative spectrum over $Y$.
Then we have the following theorem:
\begin{theorem} \cite[Theorem 3.1]{AH} 
   \begin{enumerate}
     \item  
    The scheme $\widetilde X(\mf D)$ is a normal algebraic variety of dimension $1+ {\rm dim}~T$ and the grading of 
    $\mc A$ defines an effective action of $T$ on 
    $\widetilde X(\mf D)$ having the canonical map $\pi:\widetilde X(\mf D)\to Y$ as a good quotient.
    \item The ring $H^0(Y,\mc A)=H^0(X(\mf D),\mc O_{X(\mf D)})$ is a finitely generated $M$-graded normal $\bf k$-algebra and the natural map $\widetilde X(\mf D)\to X (\mf D) $ is a proper birational contraction.
   \end{enumerate}
 \end{theorem}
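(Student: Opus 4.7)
The plan is to prove the theorem by first analyzing $\t{X}(\mf D)$ locally over the curve $Y$, exploiting the fact that each graded piece $\mc O(\mf D(u))$ is a $\mb Q$-line bundle on $Y$ that can be trivialized on sufficiently small affine opens.

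For part (1), I would cover $Y$ by affine opens $V$ on which every $\mc O(\mf D(u))$ needed locally is free, and choose trivializations compatible with the $M$-grading so that $\mc A|_V$ becomes isomorphic to a sub-$\mc O_V$-algebra of $\mc O_V[M]$ cut out by an integral piecewise affine function encoding the coefficients of $\mf D$ at the points of $V$. This identifies $\t{X}(\mf D)|_V = \underline{\Spec}_V (\mc A|_V)$ with a relative version of the affine toric scheme construction of Section~\ref{subsec-toric-schemes} (with the DVR $\mc O$ replaced by $\mc O_V$). Normality and relative dimension $\dim T$ over $V$ then follow from the toric story, so the total dimension is $1+\dim T$ and $\t{X}(\mf D)$ is normal. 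The $M$-grading of $\mc A$ produces the $T$-action, with effectivity coming from the fact that $\sigma$ is pointed and therefore $\sigma^\vee$ spans $M_\mb Q$. That $\pi$ is a good quotient follows from the affineness of $\pi$ and the identification $\pi_\ast \mc O_{\t{X}(\mf D)}^T = \mc A^T = \mc O(\mf D(0)) = \mc O_Y$.

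For part (2), the central point is finite generation of $H^0(Y,\mc A)=\bigoplus_{u\in \sigma^\vee\cap M}H^0(Y,\mc O(\mf D(u)))$ as a $\bk$-algebra. I would use Gordan's lemma to pick a finite generating set $u_1,\ldots,u_s$ of $\sigma^\vee\cap M$ and then exploit the multiplicativity $H^0(\mc O(\mf D(u)))\cdot H^0(\mc O(\mf D(u')))\subset H^0(\mc O(\mf D(u+u')))$ coming from the convexity of the polyhedral divisor $\mf D$. Combined with classical finite generation statements for section rings of semiample divisors on smooth curves (applied to each $\mf D(u_i)$), this yields finite generation of $H^0(Y,\mc A)$. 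Normality of $H^0(Y,\mc A)$ then follows because $X(\mf D)=\Spec H^0(Y,\mc A)$ is birational to the normal variety $\t{X}(\mf D)$ via the contraction map, which on affine charts is induced by the inclusion of global sections into local sections. Birationality is clear on the dense $T$-orbit, and properness reduces to the completeness of $Y$ (when $Y$ is projective) together with the semiampleness and bigness of the $\mf D(u)$.

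The main obstacle will be finite generation together with properness of the contraction in part (2). Finite generation genuinely requires intertwining Gordan's lemma on the semigroup side with a semiample-plus-big argument on the divisor side: pure semiampleness of every $\mf D(u)$ would not suffice to control the integral closure of $\bigoplus_u H^0(\mc O(\mf D(u)))$, and this is precisely why bigness of $\mf D(u)$ for $u\in\operatorname{relint}(\sigma^\vee)$ is built into the definition of a pp divisor. Establishing properness of $\t{X}(\mf D)\to X(\mf D)$ is likewise not automatic; it requires either a careful valuative-criterion argument or exhibiting $\t{X}(\mf D)$ as a relative Proj-type object over $X(\mf D)$, both of which ultimately rest on the finite generation just discussed.
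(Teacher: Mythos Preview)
The paper does not prove this theorem at all: it is quoted verbatim as background from Altmann--Hausen \cite[Theorem~3.1]{AH}, with no argument supplied. So there is no ``paper's own proof'' to compare your proposal against; your sketch is essentially an outline of how Altmann and Hausen proceed in their paper.

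That said, two points in your sketch deserve tightening. First, your argument for normality of $H^0(Y,\mc A)$ is not valid as written: being birational to a normal variety does not force a ring to be integrally closed. The clean argument is that $H^0(Y,\mc A)=H^0(\t X(\mf D),\mc O_{\t X(\mf D)})$ via pushforward along the affine map $\pi$, and the ring of global regular functions on a normal variety is integrally closed in its fraction field. Second, in this section of the paper $Y$ is explicitly allowed to be a smooth curve that is \emph{not} projective, so your appeal to ``the completeness of $Y$ (when $Y$ is projective)'' for properness of the contraction does not cover the general case; the actual argument in \cite{AH} uses the semiampleness and bigness hypotheses on the $\mf D(u)$ more directly (indeed, this is why those hypotheses are part of the definition of a pp divisor) and does not require $Y$ to be complete.
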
  
  By \cite[Theorem 3.4]{AH}, 
    any affine $T$-variety of complexity-one is of the form $X(\mf D)$ for some 
    pp divisor $D\in {\rm Div}_{\mb Q}(Y,\sigma)$. To construct general $T$-varieties, one needs the notion of a divisorial fan, which we will recall next.

\subsubsection{Divisorial fans and $T$-varieties} \label{subsec-div-fan-T-var}
    To define divisorial fans, it is necessary to introduce the notion of polyhedral divisors where the coefficients are allowed to be empty \cite[\S 1]{AHS}. Formally, we do this by redefining polyhedral divisors on $Y$ to be elements of ${\rm Div}_{\mb Q}(U,\sigma)$ for any open subset $U$ of $Y$. Symbolically, if $\mf D\in {\rm Div}_{\mb Q}(U,\sigma)$, we will write 
    $\mf D = \sum_{P \in Y} \mf D_P \otimes P$ where $\mf D_P = \emptyset$ whenever $P \notin U$. From now on, we will use the term polyhedral divisor in the above sense.
    Given a polyhedral divisor $\mf D=\sum \mf D_P\otimes P$ on $Y$, we will denote the locus of all  $P\in Y$ such that $\mf D_P\neq \emptyset$ by ${\Loc}(\mf D)$. We define a polyhedral divisor $\mf D$ on $Y$ to be a pp divisor if it is a pp divisor when viewed as a polyhedral divisor on ${\Loc}(\mf D)$. We define the affine $T$-variety $X(\mf D)$ as in \eqref{X(D)}. 

    Suppose we are given two pp divisors $\mf D$ and $\mf D'$ on $Y$ such that $\mf D_P \subset \mf D'_P$ for all $P \in Y$. We call $\mf D$ a face of $\mf D'$ if the induced map $X(\mf D)\to X(\mf D')$ is an open embedding. See \cite[Proposition 3.4]{AHS} for an equivalent formulation of this definition. In particular, if $\sigma$ and $\sigma'$ are the tail cones of $\mf D$ and $\mf D'$ respectively, and if  $\mf D$ is a face of $\mf D$, then $\sigma$ is also a face of $\sigma'$.
\begin{definition}[Divisorial fan]\cite[Definition 5.2]{AHS}  \label{def-div-fan}
    A \emph{divisorial fan} on $(Y,N)$ is a finite set of pp divisors $\mc S$ such that for any two $\mf D,\, \mf D'\in \mc S$, 
    their intersection $\mf D \, \cap \, \mf D':= \sum (\mf D_P \cap \mf D'_P) \otimes P$ is a face of both $\mf D$ and $\mf D'$, and moreover, belongs to $\mc S$.
 \end{definition}
 For $\mf D,\, \mf D' \in \mc S$ define $X_{\mf D \mf D'}\subset X(\mf D)$ to be the image of the open embedding 
 $X(\mf D \, \cap \, \mf D')\to X(\mf D)$.
   \begin{theorem}\cite[Theorems 5.3 and 5.6]{AHS}
       The affine $T$-varieties $X(\mf D)$ and isomorphisms 
       $X_{\mf D\mf D'}\to X_{\mf D'\mf D}$ are gluing data. The resulting space $X(\mc S)$ is a $T$-variety of complexity-one. Conversely, every $T$-variety arises from a divisorial fan.
   \end{theorem}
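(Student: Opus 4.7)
The plan is to handle the forward direction by checking the usual cocycle conditions for a gluing datum and then verifying the resulting scheme carries a $T$-action of the correct complexity, and the converse direction by combining Sumihiro's theorem with the affine classification of Altmann--Hausen.

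For the forward direction, first I would observe that, since $\mf D \cap \mf D'$ is a face of both $\mf D$ and $\mf D'$, the two maps $X(\mf D \cap \mf D') \hookrightarrow X(\mf D)$ and $X(\mf D \cap \mf D') \hookrightarrow X(\mf D')$ are $T$-equivariant open immersions; composing one with the inverse of the other produces the prescribed $T$-equivariant isomorphism $X_{\mf D \mf D'} \to X_{\mf D' \mf D}$, and these opens are $T$-stable. The cocycle condition for a triple $\mf D, \mf D', \mf D''$ then reduces to showing that $\mf D \cap \mf D' \cap \mf D''$ is a common face of all three of $\mf D \cap \mf D'$, $\mf D \cap \mf D''$, $\mf D' \cap \mf D''$: this is checked by iterating the face axiom of a divisorial fan and using transitivity of ``being a face'' (which follows from the functoriality in the affine classification, since a composition of $T$-equivariant open immersions of affine $T$-varieties is again such). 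The resulting scheme $X(\mc S)$ is locally of finite type and normal since normality is local and each $X(\mf D)$ is normal.

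Next I would argue that $X(\mc S)$ is separated and carries an effective $T$-action of the right dimension. Separatedness follows from the fact that each overlap $X_{\mf D \mf D'}$ is itself an affine $T$-variety (namely $X(\mf D \cap \mf D')$), so the diagonal is closed in $X(\mf D) \times X(\mf D')$. The $T$-action glues because the transition maps are $T$-equivariant, and effectiveness is inherited from the affine pieces. Finally $\dim X(\mc S) = \dim X(\mf D) = \dim T + 1$ for any $\mf D \in \mc S$ whose locus equals $Y$, so the complexity is one.

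For the converse, I would start with an arbitrary complexity-one $T$-variety $X$ and invoke Sumihiro's theorem to obtain a finite cover of $X$ by $T$-invariant affine opens $\{U_i\}$. By \cite[Theorem 3.4]{AH} each $U_i \cong X(\mf D_i)$ for a pp divisor $\mf D_i$ on some smooth curve $Y_i$, and I would identify all the $Y_i$ canonically with the unique smooth projective curve $Y$ whose function field is $\bk(X)^T$ (the rational quotient of $X$ by $T$); this uses the fact that in complexity one the rational quotient is a curve and that each chart's base curve agrees birationally with it. Then every overlap $U_i \cap U_j$ is again a $T$-invariant affine open, so is of the form $X(\mf D_{ij})$ for a pp divisor, and by \cite[Proposition 3.4]{AHS} the inclusions into $U_i$ and $U_j$ force $\mf D_{ij} = \mf D_i \cap \mf D_j$ and exhibit it as a common face. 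Closing up $\{\mf D_i\}$ under pairwise intersection yields a finite collection $\mc S$ satisfying Definition \ref{def-div-fan}, with $X(\mc S) \cong X$.

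The main obstacle will be the converse direction: specifically, promoting the individual base curves $Y_i$ of the local Altmann--Hausen charts to a common global curve $Y$ and showing the gluing on overlaps is literally given by intersecting the polyhedral coefficients over the same $Y$. This is where the bulk of the Altmann--Hausen--S\"uss argument lies, and it rests on the uniqueness part of the affine classification together with the characterization of pp faces via open embeddings. Once this compatibility is in place, the cocycle condition and the statement that $X(\mc S)$ is a $T$-variety follow formally.
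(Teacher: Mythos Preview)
The paper does not prove this theorem at all: it is quoted verbatim from \cite[Theorems~5.3 and~5.6]{AHS} as background, with the additional remark immediately following it that ``we are also using the fact that $Y$ is a curve, in which case the variety $X(\mc S)$ is separated \cite[Remark~7.4]{AHS}.'' So there is no proof in the paper to compare your proposal against; your sketch is an attempt to reconstruct the argument of \cite{AHS}.

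That said, your separatedness argument has a genuine gap. You write that ``each overlap $X_{\mf D\mf D'}$ is itself an affine $T$-variety \ldots\ so the diagonal is closed in $X(\mf D)\times X(\mf D')$.'' Affineness of the overlap is \emph{not} enough: the affine line with doubled origin is covered by two copies of $\mb A^1$ whose intersection is the affine open $\mb A^1\setminus\{0\}$, yet the diagonal is not closed. What one must show is that the map $X(\mf D\cap\mf D')\to X(\mf D)\times X(\mf D')$ is a \emph{closed} immersion, equivalently that the multiplication map on coordinate rings is surjective. In \cite{AHS} this is not automatic for arbitrary complexity; it is precisely the content of their separatedness criterion (Theorem~7.5 there), and the paper you are reading invokes \cite[Remark~7.4]{AHS} to say that in complexity one (i.e.\ $Y$ a curve) this criterion is always met. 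Your sketch of the converse via Sumihiro and the affine classification is the right shape, and you correctly identify the coordination of the base curves as the delicate point; but the separatedness step needs to be replaced by the actual argument from \cite{AHS}.
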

   Here, we are also using the fact that $Y$ is a curve, in which case the variety $X(\mc S)$ is separated \cite[Remark 7.4]{AHS}.
 
\begin{remark}  \label{rem-tilde-X-S}
We note that if $\mc S$ is a divisorial fan on $Y$, the schemes $\widetilde{X}(\mf D)$, for $\mf D\in \mc S$, also glue together to form a $T$-variety $\widetilde{X}(\mc S)$. Furthermore, we have a diagram
\[
\begin{tikzcd}
\widetilde{X}(\mc S) \ar[r,"r"] \ar[d,"\pi"] & X(\mc S) \ar[d] \ar[dl, dashed] \\
Y \ar[r] & \Spec H^0(Y,\mc O_Y).
\end{tikzcd}
\]
where $r$ is a birational contraction and $\pi$ is  flat (see
\cite[Remark 1.10]{Petersen} and \cite[\S 1.7]{Vollmert}). We also recall that we are not assuming $Y$ to be projective in this section.
\end{remark}

We end this section by recalling a fact about open invariant covers of affine $T$ -varieties, which we will need later. Let $X(\mf D)$ be the affine $T$-variety associated with a pp divisor $\mf D=\sum \mf D_P\otimes P$ on $Y$.  Suppose we have an open affine cover $\mc U$ of $\Spec H^0(\Loc(\mf D),\mc O)$. For $U\in \mc U$, we define $$\mf D|_{q^{-1}(U)}:=\sum_{P\in U} \mf D_P \otimes P\,.$$
Here $q$ is the natural map ${\Loc(\mf D)}\to \Spec H^0(\Loc(\mf D),\mc O)$.
Then, the $T$-varieties $X(\mf D|_{q^{-1}U})$ along with the natural map $X(\mf D|_{q^{-1}U})\to X(\mf D)$ are open immersions, and they form an open cover of $X(\mf D)$ (see \cite[Proposition 3.3]{AHS}, \cite[Example 1.19]{Suss} and \cite[Lemma 2.6]{PS}). Moreover, we have the following:
\begin{lemma}\cite[Lemma 1.21]{Suss} \label{cover} 
    Let $X(\mf D)$ be the affine $T$-variety associated with a polyhedral divisor $\mf D$ on $Y$. Let $\mc V$ be an open invariant cover of $X(\mf D)$. Then, there exists a cover $\mc U$ of 
    $\Spec H^0(\Loc(\mf D),\mc O)$ 
    such that 
the cover $\{X(\mf D|_{q^{-1}(U)})\,|\, U\, \in \, \mc U\}$ of $X(\mf D)$ refines $\mc V$. In other words,
for any $U\in \mc U$, there exists $V\in \mc V$ such that $X(\mf D|_{q^{-1}(U)})\subset V$.
In particular, if $\Loc(\mf D)$ is complete, then $X(\mf D)\in \mc V$.
\end{lemma}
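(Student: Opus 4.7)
The plan is to exploit the fact that $\pi \colon X(\mf D) \to \Spec H^0(\Loc(\mf D), \mc O)$ is an affine good quotient for the $T$-action and to refine $\mc V$ by pullbacks of opens from the base. Without loss of generality replace $Y$ by $\Loc(\mf D)$, so that $\mf D$ has only non-empty coefficients. Since $H^0(X(\mf D), \mc O) = \bigoplus_{u \in \sigma^\vee \cap M} H^0(Y, \mc O(\mf D(u)))$ and the $T$-invariants are the degree-zero piece $H^0(Y, \mc O)$, the morphism $\pi$ is the affine GIT quotient for the reductive group $T$, and in particular a good quotient. Two properties of good quotients are crucial: (a) $\pi$ sends $T$-invariant closed subsets to closed subsets, and (b) each fiber $\pi^{-1}(p)$ contains a unique closed orbit $O_p$, which lies in the closure of every $T$-orbit in that fiber.

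Given a point $p \in \Spec H^0(Y, \mc O)$, choose $V_p \in \mc V$ containing $O_p$; this is possible since $\mc V$ is $T$-invariant and covers $X(\mf D)$. I claim that $\pi^{-1}(p) \subseteq V_p$: if some $y \in \pi^{-1}(p) \setminus V_p$ existed, then the invariant closed set $X(\mf D) \setminus V_p$ would contain $\overline{T \cdot y}$, and by (b) also $O_p \subseteq \overline{T \cdot y}$, contradicting $O_p \subseteq V_p$. Now set
$$U_p \; := \; \Spec H^0(Y, \mc O) \,\setminus\, \pi\bigl(X(\mf D) \setminus V_p\bigr).$$
Property (a) makes $U_p$ open, the fiber containment gives $p \in U_p$, and by construction $\pi^{-1}(U_p) \subseteq V_p$. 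Setting $\mc U = \{U_p\}_p$ thus yields an open cover of the base.

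To conclude, I identify $X(\mf D|_{q^{-1}(U)})$ with $\pi^{-1}(U)$ as open subschemes of $X(\mf D)$. For a principal open $U = D(f)$ with $f \in H^0(Y, \mc O)$, the preimage $q^{-1}(D(f))$ is the locus in the curve $Y$ where $q^* f$ is non-vanishing; $Y = \Loc(\mf D)$ is either affine or complete, so coherent-sheaf localization gives
$$H^0\bigl(q^{-1}(D(f)), \mc O(\mf D(u))\bigr) \;=\; H^0\bigl(Y, \mc O(\mf D(u))\bigr)_f,$$
and hence $X(\mf D|_{q^{-1}(D(f))}) = \pi^{-1}(D(f))$. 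Refining $\{U_p\}$ by principal opens reduces the general case to this one. For the final assertion, if $\Loc(\mf D)$ is complete then $H^0(Y, \mc O) = \bk$, the base is a single point, and the construction above produces $V \in \mc V$ with $X(\mf D) \subseteq V$, forcing $V = X(\mf D) \in \mc V$. The main technical obstacle is the closed-orbit/fiber containment claim, which rests on the standard GIT fact that two points have the same image under a good quotient of a reductive group iff their orbit closures meet; this is what ensures a single $V \in \mc V$ can swallow an entire fiber.
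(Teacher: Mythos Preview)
The paper does not give its own proof of this lemma; it is stated with a citation to \cite[Lemma~1.21]{Suss} and used as a black box. So there is nothing in the paper to compare your argument against directly.

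That said, your proof is correct and is precisely the standard good-quotient argument one would expect here. The key points---that $X(\mf D) \to \Spec H^0(\Loc(\mf D),\mc O)$ is an affine GIT quotient, that each fiber contains a unique closed orbit lying in the closure of every other orbit in that fiber, and that images of invariant closed sets are closed---are exactly what is needed, and you invoke them correctly. The identification $X(\mf D|_{q^{-1}(D(f))}) = \pi^{-1}(D(f))$ via localization is fine in the complexity-one setting because $\Loc(\mf D)$, being open in a smooth projective curve, is either affine (where $q$ is an isomorphism and the localization is immediate) or complete (where the base is a point and there is nothing to check). Your deduction of the ``in particular'' clause is also clean.
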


\subsubsection{Slices and toric schemes} Let us now assume that $Y$ is a  smooth projective curve over $\bf k$ and let us fix a divisorial fan $\mc S$ on $(Y,N)$. 
\begin{definition}[Slice] \label{def-slice}
Let $P$ be a closed point in $Y$.
    We define the slice of the divisorial fan $S$ at P by
$\mc S_P:=\{\, \mf D_P \,|\, \mf D \, \in \, \mc S \, \}$.
\end{definition}

For $P\in Y$, let us denote the local ring of $Y$ at $P$ by $O_{Y,P}$. Define $\widetilde X(S)_P$ to be the fiber product:
\begin{equation} \label{equ-tild-X-P}
\begin{tikzcd}  
    \widetilde X(\mc S)_P \ar[r] \ar[d] & \widetilde X(\mc S) \ar[d, "\pi"] \\
    \Spec \mc O_{Y,P} \ar[r]       & Y  
\end{tikzcd}               
\end{equation}

Note that since the map $\pi$ is $T$-invariant, the scheme $\widetilde X(S)_P$ is a toric scheme over $\Spec \mc O_{Y,P}$ with respect to the split torus $T\times \Spec \mc O_{Y,P}$.  

Recall that $c(\Sigma_1)$ denotes the \emph{cone} over a polyhedral complex $\Sigma_1$ in $N_\Q$. It is a fan in $N_\Q \times \Q_{\geq 0}$ obtained by taking cones over the polyhedra in $\Sigma_1$ (see Remark \ref{polyhedra and cone}). Also, we recall that to each fan in $N_\Q \times \Q_{\geq 0}$, there corresponds a toric scheme (Theorem \ref{th-toric-scheme-classification}). 

\begin{lemma} \label{lem-slice-toric-scheme}
The toric scheme $\widetilde X(\mc S)_P$ is the toric scheme associated to the fan $c( {\mc S}_P)\subset N_{\mb Q}\times \mb Q_{\geq 0}$.
\end{lemma}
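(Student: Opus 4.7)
The plan is to reduce the statement to the affine case by covering both sides and then match them ring-theoretically. On the $T$-variety side, cover $\widetilde X(\mc S)$ by the invariant open affines $\widetilde X(\mf D)$ for $\mf D \in \mc S$; on the toric side, cover $\X_{c(\mc S_P)}$ by the affine toric schemes $\mc U_{c(\mf D_P)}$ over $\mc O_{Y,P}$, with the convention that when $\mf D_P = \emptyset$ the corresponding piece is $\mc U_{\tail(\mf D) \times \{0\}}$. Since base change along $\Spec \mc O_{Y,P} \to Y$ commutes with open covers and intersections, it suffices to produce canonical $T$-equivariant isomorphisms $\widetilde X(\mf D) \times_Y \Spec \mc O_{Y,P} \cong \mc U_{c(\mf D_P)}$ respecting the face inclusions coming from $\mf D' \subset \mf D$.

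For the affine identification, I would compute both sides as $\Spec$ of explicit $M$-graded $\mc O_{Y,P}$-algebras. On the $T$-variety side, the definition \eqref{X(D)} together with the compatibility of relative $\Spec$ with localization gives
$$\widetilde X(\mf D) \times_Y \Spec \mc O_{Y,P} = \Spec \bigoplus_{u \in \sigma^\vee \cap M} \mc O(\mf D(u))_P, \qquad \sigma := \tail(\mf D),$$
and using \eqref{O(D)} together with \eqref{eval}, the stalk at $P$ is $\mc O(\mf D(u))_P = \varpi_P^{-\lfloor \mathrm{eval}_u(\mf D_P) \rfloor}\, \mc O_{Y,P}$, with the understanding that when $\mf D_P = \emptyset$ this direct sum inverts $\varpi_P$. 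On the toric side, a short computation of the dual cone gives
$$c(\mf D_P)^\vee = \{(u, k) \in M_\Q \times \Q : u \in \sigma^\vee,\ k \geq -\mathrm{eval}_u(\mf D_P)\},$$
so for each fixed $u$ the minimum admissible integer height is $k = -\lfloor \mathrm{eval}_u(\mf D_P) \rfloor$. Substituting into \eqref{affine-toric-scheme} and collecting by the $M$-grading identifies $R_{c(\mf D_P)}$ with the same graded algebra, yielding the desired $T$-equivariant isomorphism.

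For the gluing step, recall that when $\mf D'$ is a face of $\mf D$ in $\mc S$, one has $\mf D'_P \subset \mf D_P$ as polyhedra with $\mf D'_P$ a face of $\mf D_P$, hence $c(\mf D'_P)$ is a face of $c(\mf D_P)$; the open immersion $\widetilde X(\mf D') \hookrightarrow \widetilde X(\mf D)$ then base-changes to the standard face inclusion $\mc U_{c(\mf D'_P)} \hookrightarrow \mc U_{c(\mf D_P)}$. Gluing the local isomorphisms produces the desired $\widetilde X(\mc S)_P \cong \X_{c(\mc S_P)}$. The main technical nuisance is the bookkeeping when $\mf D_P = \emptyset$: the corresponding affine piece is supported only over the generic point of $\Spec \mc O_{Y,P}$ and corresponds to the horizontal cone $\tail(\mf D) \times \{0\}$ in $c(\mc S_P)$, where $\varpi_P$ becomes a unit; once this corner case is absorbed, the identification is a direct dictionary between the Altmann--Hausen presentation $\underline{\operatorname{Spec}}_Y \bigoplus \mc O(\mf D(u))$ and the toric-scheme presentation $\Spec \mc O_{Y,P}[\chi^u \varpi^k]$.
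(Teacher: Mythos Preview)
Your proposal is correct and follows essentially the same route as the paper: both reduce to the affine pieces $\widetilde X(\mf D)_P$, compute the dual cone $c(\mf D_P)^\vee$ to identify $R_{c(\mf D_P)}$ as the $M$-graded $\mc O_{Y,P}$-algebra $\bigoplus_{u \in \tail(\mf D)^\vee \cap M} \mc O_{Y,P}\,\varpi_P^{-\lfloor \mathrm{eval}_u(\mf D_P)\rfloor}\chi^u$, and match this with the stalk $\bigoplus_u \mc O(\mf D(u))_P\,\chi^u$. The paper carries out the dual-cone computation by first exhibiting generators $(v_i,1)$, $(\rho_j,0)$ of $c(\mf D_P)$ and leaves the gluing implicit; your version states the dual cone directly and is a bit more explicit about the gluing step and the degenerate case $\mf D_P = \emptyset$, but these are expository rather than mathematical differences.
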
 
\begin{proof}
   Let $\mf D\in \mc S$. We show that $\widetilde X(\mf D)_P$ is  the toric scheme associated to $ \sigma:= {\rm cone}({\mf D}_P)$. 
   
   Let $v_i$ be the vertices of $\mf D_P$ and $\rho_j$ be ray generators of the tail cone $\tau:= {\rm tail}(\mf D_P)$. Then 
   $$\mf D_P= \{\sum a_iv_i + \sum b_j\rho_j \, | \, a_i, \, b_j\, \in \, \mb Q,\, 0 \leq a_i\leq 1, \,, 0\leq b_j \,, \sum a_i\, =\, 1\}.$$
  Now, we claim that 
  $$ \sigma = \mb Q_{\geq 0}(\{(v_i,1),\, (\rho_j,0)\})\,.$$
Indeed, by definition \cite[\S 2.1]{BPS} 
$$ \sigma := \, \overline{\mb Q_{\geq 0}(\mf D_P \times \{1\}})\,.$$

  As noted above, any element of $\mf D_P$ is of the form 
  $ \sum a_iv_i + \sum b_j\rho_j$ with $\sum a_i=1$ and $b_j\geq 0$. Therefore
  $$( \sum a_iv_i + \sum b_j\rho_j,1)=\sum_i a_i(v_i,1)+\sum_j b_j(\rho_j,0)\,.$$
  Therefore, the inclusion 
  $$ \sigma \subset \mb Q_{\geq 0}(\{(v_i,1),\, (\rho_j,0)\})\,.$$
  is clear. For the reverse inclusion, we only need to check that $(\rho_j,0)\in \sigma$. This can be seen as follows: fix $v\in \mf D_P$. Then $\forall~t>0$, $(v+t\rho_j,1)\in \sigma$. Therefore, $(v/t+\rho_j,1/t)\in \sigma$. Taking $t\rightarrow 0$, we get $(\rho_j,0)\in \sigma$. This proves the claim.

 Recall that the toric scheme associated with $ \sigma$ is given by 
 $$\mf U_{\sigma}= {\rm Spec} \, \mc O_{Y,P}\left[\chi^u \varpi_P^k \; | \; (u,k) \in \sigma^{\vee}\cap \widetilde{M} \right]\,$$ (see \eqref{affine-toric-scheme}).
 Here $\varpi_P$ is a uniformizer of $P$ in $\mc O_{Y,P}$.
  Since $\sigma$ is generated by the 
  $(v_i,1),\, (\rho_j,0)$, we get
  \begin{align*}
      (u,k)\in \sigma^{\vee}\cap \widetilde{M} \iff &  \, \langle (u,k),(v_i,1) \rangle, \, \langle(u,k), \, (\rho_j,0)\rangle \, \geq 0, ~~ \forall~i,j \geq 0\,, \\
      \iff & \langle u,v_i \rangle \geq -k,\, \langle u,\rho_j \rangle \geq 0,~~\forall~i,j \\
       \iff & k \geq - \lfloor {\rm eval}_u(\mf D_P) \rangle \rfloor, \, u\in \tau^{\vee}\cap M, 
  \end{align*}
  (see (\ref{eval}) for the definition of ${\rm eval}_u$). Therefore,
  $$\mf U_{\sigma}= {\rm Spec} \, \bigoplus\limits_{
  u\in \tau^{\vee}\cap M
  } \mc O_{Y,P}\varpi_{P}^{-\lfloor {\rm eval}_u(\mf D_P) \rfloor}\chi^{u}. $$
  
  Now, by definition of $\widetilde X(\mf D)$ (see (\ref{X(D)})), it follows that
  $$\widetilde X(\mf D)_P= {\rm Spec} \bigoplus\limits_{
  u\in \tau^{\vee}\cap M
  } \mc O(\mf D(u))_P \chi^u\,.$$
  Here $\mc O(\mf D(u))_P$ is the localisation of the sheaf $\mc O(\mf D(u))$ at the point $P$. The module $\mc O(\mf D(u))_P$ is same as $\mc O(\lfloor \mf D(u) \rfloor)_P$ by (\ref{O(D)}) and therefore, is generated as an $\mc O_{Y,P}$ module by the element $-\lfloor {\rm ord}_P(\mf D(u)) \rfloor = -\lfloor {\rm eval}_u(\mf D_P) \rfloor$ by (\ref{D(u)}). Hence  $\mf U_{\sigma}=\widetilde X(\mf D)_P$. This completes the proof of the lemma.
\end{proof}

\section{Main theorem}  \label{sec-main}
Let $T$ be an algebraic torus with character lattice $M$ and cocharacter lattice $N$. Let $Y$ be a smooth projective curve over $\bk$ and $\mc{S}$ be a divisorial fan on $Y$. Let $X(\mc S)$ be the associated $T$-variety. Let $K = \bk(Y)$ denote the function field of $Y$ and $E$ be an $r$-dimensional vector space over $K$. For each $P \in Y$, let us denote the extended Bruhat-Tits building of $E$ (with respect to the discrete valuation $\val_P$ on $K$) by $\t{\B}_P(E)$. Also let us denote the extended Tits building of $E$ by $\t{\B}_\sph(E)$ (see Definition \ref{def-ext-build}). Let $\eta$ be the generic point of $Y$. Then we have the toric variety $\t{X}(\mS_{\eta})$ with the action of the torus $T_{\eta}$. We identify $T_{\eta}$ with the open orbit in $\t{X}(\mS_{\eta})$ by fixing a point $x_0$ in the open orbit corresponding to the identity $1 \in T_{\eta}$.

Let $\E$ be a $T$-equivariant vector bundle of rank $r$ over a $T$-variety $X$. Then $\E$ is said to be \emph{equivariantly trivial} if there is a $T$-equivariant isomorphism between $\E$ and $X \times \bk^r \rar X$, where $T$ acts on $X \times \bk^r$ diagonally via a homomorphism $T \rar \GL(r, \bk)$.
\begin{lemma}\label{trivial}
Let $\mc E$ be an equivariantly trivial $T$-equivariant vector bundle over a $T$-variety $\t{X}(\D)$ over a smooth projective curve $Y$. For each $P \in Y$, we consider the pullback bundle $\mc{E}_P$ over the affine toric scheme $\t{X}(\D_P)$ (see Lemma \ref{lem-slice-toric-scheme}). Let $E$ be the fiber of the vector bundle $\mc{E}_P$ over $x_0$. Then for each $P \in \Loc(\D)$, we have an affine linear function $h_P:  \D_P \rar \t{\B}_P(E)$. Moreover, there is a basis $b_{\D,1}, \ldots, b_{\D,r}$ of $E$ and $u_{\D,1}, \ldots, u_{\D, r} \in M$ such that \[h_P(x)(\sum_{i=1}^r \lambda_i b_{\D,i})=  \min_i\{\val_P(\lambda_i)+\langle u_{\D, i}, x \rangle\}\] for all $P \in \Loc(\D)$ and for all $x \in \D_P$. Here $\lambda_i \in K$, for $i=1, \ldots, r$.
\end{lemma}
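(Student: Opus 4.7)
The plan is to leverage the equivariant triviality to diagonalize the $T$-action, and then apply the classification theorem for equivariant vector bundles on toric schemes (Theorem~\ref{th:vb-dvr}) to each pullback $\mc E_P$.

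First, I would use the equivariant trivialization $\mc E \cong \t{X}(\D) \times \bk^r$ to produce a representation $\rho \colon T \to \GL(r, \bk)$. Since $T$ is a torus, $\rho$ is diagonalizable, so I would pick a basis $e_1, \ldots, e_r$ of $\bk^r$ and characters $u_1, \ldots, u_r \in M$ with $\rho(t) e_i = \chi^{u_i}(t) e_i$. This yields an equivariant splitting $\mc E \cong \bigoplus_{i=1}^r L_{u_i}$ into trivial equivariant line bundles of weights $u_i$. I would then set $b_{\D, i} := e_i \otimes 1 \in \bk^r \otimes_{\bk} K = E$ and $u_{\D, i} := u_i$; the crucial point is that this data is determined globally by the single trivialization of $\mc E$ and therefore does not depend on $P$.

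Next, for each $P \in \Loc(\D)$, Lemma~\ref{lem-slice-toric-scheme} identifies $\t{X}(\D)_P$ with the affine toric scheme over $\mc O_{Y, P}$ associated to the cone $\cone(\D_P) \subset N_\Q \times \Q_{\geq 0}$, whose level-one slice is the single polyhedron $\D_P$. Applying Theorem~\ref{th:vb-dvr} to the toric vector bundle $\mc E_P$ produces a piecewise affine map $h_P \colon \D_P \to \t{\B}_P(E)$, and since the domain is a single polyhedron, $h_P$ is in fact globally affine.

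The remaining task is to pin down the explicit formula for $h_P$. The splitting $\mc E \cong \bigoplus_i L_{u_i}$ restricts to a splitting of $\mc E_P$, so by the functoriality in Theorem~\ref{th:vb-dvr}, $h_P$ is assembled from the piecewise affine maps attached to each $L_{u_i}$. For a trivial equivariant line bundle of weight $u$, the associated rank-one map is the linear form $x \mapsto \langle u, x\rangle$ (the classical equivariant line bundle case). Assembling via the common frame $\{b_{\D, i}\}$ and applying formula (\ref{equ-Phi-sigma}) then yields
\[
h_P(x)\Bigl(\sum_{i=1}^r \lambda_i b_{\D, i}\Bigr) = \min_i \{\val_P(\lambda_i) + \langle u_{\D, i}, x\rangle\},
\]
as claimed. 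The main obstacle I anticipate is verifying that $\{b_{\D, i}\}$ and $\{u_{\D, i}\}$ serve as adapted data for $h_P$ \emph{uniformly in $P$}; this uniformity is forced by the globality of the trivialization, which induces the same weight decomposition in every local slice and hence places each image $h_P(\D_P)$ in the common extended apartment $\t{A}_\aff(B_\D)$ determined by $B_\D = \{b_{\D, i}\}$.
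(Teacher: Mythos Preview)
Your proposal is correct and follows essentially the same strategy as the paper: extract from the equivariant trivialization a global weight decomposition (the paper does this via $M$-homogeneous free generators $s_1,\ldots,s_r$ of $H^0(\t X(\D),\mc E)$ and their images $\chi^{u_{\D,i}}b_{\D,i}$ under $H^0(\t X(\D),\mc E)\hookrightarrow K[T_\eta]\otimes E$, while you phrase it as diagonalizing the representation $\rho$ and splitting into equivariant line bundles), and then feed this uniform data into Theorem~\ref{th:vb-dvr} at each $P\in\Loc(\D)$ to read off the formula via \eqref{equ-Phi-sigma}. The only cosmetic difference is that the paper tracks the sections through the commutative triangle $T_\eta\hookrightarrow\t X(\D_P)\hookrightarrow\t X(\D)$ to justify the $P$-independence, whereas you invoke functoriality of the direct-sum decomposition; both amount to the same observation.
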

\begin{proof} By Theorem \ref{th:vb-dvr}, we have an affine linear function $h_P: \D_P \rar \t{\B}_P(E)$.
    Using the equivariant triviality of $\mc E$, we will have $M$-homogeneous sections $s_1, \ldots, s_r \in H^0(\t{X}(\D), \mc E)$ such that \[H^0(\t{X}(\D), \mc E) \cong \bigoplus_{i=1}^r H^0(\t{X}(\D), \mc O)s_i\] as $M$-graded $H^0(\t{X}(\D), \mc O)$-modules. As $\eta \in \Loc(\D)$, we have the inclusion
\begin{equation} \label{generic}
    T_{\eta} \hookrightarrow \t{X}(\D).
\end{equation}

Since $\mc {E} |_{T_{\eta}}$ is trivial, we have the identification $\mc {E} |_{T_{\eta}}\cong \mc {O}_{T_{\eta}} \otimes E$ and hence $H^0(T_{\eta}, \mc {E} |_{T_{\eta}})= H^0(T_{\eta},\mc {O}_{T_{\eta}} \otimes E) =K[T_{\eta}] \otimes E$. Thus, from \eqref{generic}, we have the inclusion 
\[H^0(\t{X}(\D), \mc{E})\hookrightarrow H^0(T_{\eta}, \mc {E} |_{T_{\eta}})=K[T_{\eta}] \otimes E.\] 
Let the image of $s_i$ under this inclusion be denoted by $\chi^{u_{\D,i}} \ b_{\D,i}$. Now, for $P \in \Loc(\D)$, we have \[H^0(\t{X}(\D_P), \mc E) \cong \bigoplus_{i=1}^r H^0(\t{X}(\D_P), \mc O)s_i|_{\t{X}(\D_P)}\] as $M$-graded $H^0(\t{X}(\D_P), \mc O)$-modules. Now, from the commutative diagram
 \[
\begin{tikzcd}
    \widetilde T{\eta} \ar[rr, hook] \ar[rd, hook] & & \widetilde X(\mf D) \\
    &   \t{X}(\D_P)\ar[ru, hook]    &   
\end{tikzcd}           
           \]
           we have that $s_i|_{\t{X}(\D_P)}$ maps to $\chi^{u_{\D,i}} \ b_{\D,i}$ under the inclusion \[H^0(\t{X}(\D_P), \mc{E})\hookrightarrow K[T_{\eta}] \otimes E.\] Then it follows from the definition of $h_P$ that \[h_P(x)(\sum_{i=1}^r \lambda_i b_{\D,i})=  \min_i\{\val_P(\lambda_i)+\langle u_{\D, i}, x \rangle\}\] for all $P \in \Loc(\D)$ and for all $x \in \D_P$. Here $\lambda_i \in K$ for $i=1, \ldots, r$.
\end{proof}
The above lemma motivates us to define the following:
\begin{definition}[Support map] \label{support}
     A \emph{support map} on a divisorial fan $\mc{S}$ is a  collection $h = (h_P: |\mc{S}_P| \rar \t{\B}_P(E))_{P \in Y}$, where $E$ is a finite dimensional $K$-vector space such that the following holds.
    \begin{enumerate}
        \item Each $h_P$ is a piecewise affine linear function that is affine linear on each polyhedron in $\mc{S}_P$ (see Definition \ref{def-PA-map}).
        \item $\lin(h_P)=\lin(h_P'): \tail(\mS) \rar \t{\B}_\sph(E)$ for all $P, P' \in Y$ (we note that for $P, P' \in Y$, we have $\tail(\mS_P)=\tail(\mS_{P'})=\tail(\mS)$).
        \item If $\D \in \mc S$ has affine locus, then there is a basis $b_{\D,1}, \ldots, b_{\D,r}$ of $E$ and $u_{\D, 1}, \ldots, u_{\D, r} \in M$ such that we have
        \[h_P(x)(\sum_{i=1}^r \lambda_i b_{\D,i})=  \min_i\{\val_P(\lambda_i)+\langle u_{\D, i}, x \rangle\},\] for all $\lambda_i \in K$, $i=1, \ldots, r$, all but finitely many $P \in Y$ and all $x \in \D_P$. 

        \item If $\D \in \mc S$ has complete locus, then there is a basis $b_{\D,1}, \ldots, b_{\D,r}$ of $E$ and $u_{\D, 1}, \ldots, u_{\D, r} \in M$ such that we have
\[h_P(x)(\sum_{i=1}^r \lambda_i b_{\D,i})=  \min_i\{\val_P(\lambda_i)+\langle u_{\D, i}, x \rangle\},\] for all $\lambda_i \in K, \ i=1, \ldots, r$, all $P \in Y$ and all $x \in \D_P$.
    \end{enumerate}  
    A \emph{morphism} between two support maps $h=(h_P: | \mc{S}_P| \rar \t{\B}_P(E))_{P \in Y}$ and $h'=(h'_P: | \mc{S}_P| \rar \t{\B}_P(E'))_{P \in Y}$ is defined to be a linear map of $K$-vector spaces $\phi: E \rar E'$ such that $\phi$ gives a morphism of piecewise affine maps $h_P \rar h'_P$ for all $P \in Y$ (see Section \ref{subsec-tvbs-toric-scheme}).

We denote by $\mc C (\mc{S})$ the category of support maps on the divisorial fan $\mS$.
\end{definition}
\begin{rmk}
    If $E$ is one-dimensional, choosing an isomorphism of $E$ with $K$ gives us an identification of $\t{\B}_P(K)$ with $\Q$, where each additive norm $v$ on $K$ is identified with 
    $v(1)$. Then condition (4) of Definition \ref{support} simplifies to the following. There is $f \in K$ and $u \in M$ such that for all $P \in Y$ and $\lambda \in K$ we have: \[h_P(x)(\lambda f)= \val_P(\lambda) + \langle u, x \rangle .\] In particular, \[h_P(x)(1)= -\val_P(f) + \langle u, x \rangle.\]Thus, we recover the data of a divisorial support function in \cite[Definition 3.4]{PS}.

Recall that in \cite[Definition 3.4]{PS}, it is required that in a divisorial support function $(h_P)_{P \in Y}$, except for finitely many $P$, $h_P$ coincides with its linear part. We would like to point out that when $\dim(E) > 1$, this condition does not directly make sense as the target space for $h_P$ is the extended Bruhat-Tits building $\t{\B}_P(E)$ while the target space for $\lin(h_P)$ is the extended Tits building $\t{\B}_\sph(E)$.
\end{rmk}

\begin{theorem}[Main theorem] \label{thm-main1}
We have the following:
\begin{enumerate}
\item There is an equivalence of categories between the category of $T$-equivariant vector bundles on $X(\mc S)$ and the category of support maps $\mc{C}(\mS)$. 
\item Let $\E$ be the $T$-equivariant vector bundle on $X(\mS)$ corresponding to a support map $h=(h_P)_{P \in Y}$. Then, the piecewise linear map associated to the toric vector bundle obtained by pulling back $\E$ to the fiber over the generic point of $Y$, i.e. the point over $K=\bk(Y)$, is given by the common linear part of the piecewise affine maps $h_P$.
\end{enumerate}       
\end{theorem}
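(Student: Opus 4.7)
The plan is to construct functors $F$ and $G$ between the two categories and verify they are quasi-inverse, with part (2) of the theorem emerging naturally from the definition of $F$.

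To define the functor $F$, start with a $T$-equivariant vector bundle $\mc E$ on $X(\mc S)$ and pull it back along the birational contraction $r: \t{X}(\mc S) \to X(\mc S)$ to obtain $\t{\mc E}$. For each $P \in Y$, further pull back to $\t{X}(\mc S)_P$, which by Lemma \ref{lem-slice-toric-scheme} is the toric scheme over $\Spec \mc O_{Y,P}$ associated to the fan $c(\mc S_P)$. Theorem \ref{th:vb-dvr} then yields a piecewise affine map $h_P: |\mc S_P| \to \t{\B}_P(E)$, where $E$ is the fiber of $\t{\mc E}$ over the $T$-fixed identity point $x_0$. Verifying that $(h_P)_{P\in Y}$ satisfies Definition \ref{support} amounts to checking conditions (2)--(4): condition (2) holds because the generic fibers of all the $\t{X}(\mc S)_P$ are the same toric $K$-variety $X_{\Sigma_0}$ (with $\Sigma_0=\tail(\mc S)$) and the last assertion of Theorem \ref{th:vb-dvr} identifies the linear part of $h_P$ with the piecewise linear map of $\t{\mc E}|_{X_{\Sigma_0}}$, which is independent of $P$. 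This simultaneously proves part (2). Conditions (3) and (4) follow by applying Lemma \ref{cover} to refine an invariant open cover of $X(\D)$ (which by Lemma \ref{cover} can be taken to be $X(\D)$ itself when $\Loc(\D)$ is complete) into pieces on which $\mc E$ is equivariantly trivial, and then invoking Lemma \ref{trivial} to extract the uniform basis and characters.

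To define $G$, I work locally. For each $\D \in \mc S$, refine using Lemma \ref{cover} to obtain a cover of $X(\D)$ by opens $X(\D|_{q^{-1}(U)})$ where $U$ is chosen small enough that the basis and characters $(b_{\D,i}, u_{\D,i})$ from condition (3) or (4) describe $h_P$ uniformly for every $P$ in $U$. On each such piece, the pair $(b_{\D,i}, u_{\D,i})$ manifestly defines an equivariantly trivial $T$-equivariant vector bundle whose restriction to each $\t{X}(\D|_{q^{-1}(U)})_P$ realizes the toric vector bundle classified by $h_P$ under Theorem \ref{th:vb-dvr} (via Lemma \ref{trivial}). Overlaps of these local trivializations match because condition (1) of Definition \ref{support} forces $h_P$ and $h_{P'}$ to agree on intersections of polyhedra, and the equivalence in Theorem \ref{th:vb-dvr} promotes this agreement to an isomorphism of the corresponding locally trivial bundles on each overlap. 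Gluing yields the desired bundle on $X(\mc S)$.

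The main obstacle I expect is verifying that $F$ and $G$ are quasi-inverse and functorial. For $F \circ G \cong \mathrm{id}$, the uniqueness clause in Theorem \ref{th:vb-dvr}, applied slice by slice at each $P$, identifies the piecewise affine map recovered from the bundle constructed by $G$ with the input support map. For $G \circ F \cong \mathrm{id}$, one shows that different local trivialization choices in $G$ produce canonically isomorphic bundles using the recorded characters $u_{\D,i}$, and that the resulting bundle matches the original $\mc E$ on each affine piece $X(\D)$ of a sufficiently fine cover. Functoriality on morphisms reduces, on each slice $\t{X}(\mc S)_P$, to the statement that morphisms of toric vector bundles correspond to morphisms of piecewise affine maps (Theorem \ref{th:vb-dvr}), so a morphism of support maps as in Definition \ref{support} assembles into a global equivariant bundle morphism.
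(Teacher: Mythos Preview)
Your forward direction (constructing the support map from $\mc E$) matches the paper and is fine.

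The gap is in the reverse construction $G$. You propose to cover $X(\mc S)$ by opens $X(\D|_{q^{-1}(U)})$ on each of which the support-map data supplies a \emph{single} basis $b_{\D,i}$ and characters $u_{\D,i}$ describing $h_P$ for every $P\in U$. But Definition~\ref{support}(3) only guarantees such a basis for \emph{all but finitely many} $P$. At an exceptional point $P_0$, the map $h_{P_0}$ is an arbitrary piecewise affine map on $\mc S_{P_0}$; there is no reason it is adapted to a single frame, so no Zariski neighbourhood $U\ni P_0$ of the kind you describe need exist. Your cover therefore misses the fibres over these finitely many bad points, and the bundle is not defined there. The paper resolves this by first building $\mc E'$ only over the open $X'$ lying above the good locus $U\subset Y$ (via an explicit Gluing Lemma, Lemma~\ref{gluing}), and then forming the fpqc cover
\[
X'\ \sqcup\ \bigsqcup_{P\notin U,\ \Loc(\D)\text{ affine}} \t X(\D_P)\ \longrightarrow\ X(\mc S),
\]
using Theorem~\ref{th:vb-dvr} directly on each toric scheme $\t X(\D_P)$ to supply the bundle over the bad fibres, and invoking fpqc descent to glue. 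This descent step is the missing idea.

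A secondary issue: your justification for gluing on overlaps (``$h_P$ and $h_{P'}$ agree on intersections of polyhedra'') is not the right statement, since $h_P$ and $h_{P'}$ take values in different buildings $\t\B_P(E)$ and $\t\B_{P'}(E)$. The actual gluing is between the trivial bundles on $X(\D^1)$ and $X(\D^2)$ for two pp divisors, and requires checking that the $u$-graded pieces of the module of sections agree on $X(\D^1\cap\D^2)$; this is a nontrivial computation (the paper's Lemma~\ref{gluing}) that uses the compatibility of the bases $b_{\D^j,i}$ forced by Theorem~\ref{th:vb-dvr} at each $P$ separately.
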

    
\begin{proof}
Let $\mc E$ be a $T$-equivariant vector bundle on $X(\mS)$. We first consider the pullback of $\mc E$ to $\t{X}(\mS)$. Then, for each $P \in Y$, we further pull it back to the toric scheme $\t{X}(\mS_P)$ to get the toric bundle $\mc{E}_P$ over $\t{X}(\mS_P)$. Let $E$ be the fiber of the vector bundle $\mc{E}_P$ over $x_0$. By Theorem \ref{th:vb-dvr}, we have a piecewise affine linear function $h_P: | \mc{S}_P| \rar \t{\B}_P(E)$. This is affine linear on each polyhedron of $S_P$ by construction. Moreover, the linear part of $h_P$ corresponds to the toric vector bundle $\mc {E}_{\eta}$ over the toric variety $\t{X}(\mS_{\eta})$. This shows that condition (2) of Definition \ref{support} holds.

Now, let $\D \in \mS$ be such that the locus of $\D$ is affine. Since $\mc {E}|_{X(\D)}$ is a $T$-equivariant vector bundle, we will have an affine open invariant cover $\mc U$ of $X(\D)$ such that $\mc E$ is equivarianlty trivial over each open set in this cover (see \cite[Proposition 2.5]{IS}). By Lemma \ref{cover}, we may assume that there is an open cover $\mc U'$ of $Y$ such that \[\mc U= \{X(\D |_U) \mid U \in \mc U'\}.\]Note that, $\t{X}(\D |_U) = X(\D |_U)$, for $U \in \mc U'$. By Lemma \ref{trivial}, there is a basis $b_{\D,1}, \ldots, b_{\D,r}$ of $E$ and $u_{\D,1}, \ldots, u_{\D, r} \in M$ such that \[h_P(x)(\sum_{i=1}^r \lambda_i b_{\D,i})=  \min_i\{\val_P(\lambda_i)+\langle u_{\D, i}, x \rangle\}\] for all $P \in \Loc(\D|_U)$ and for all $x \in \D_P$. Here $\lambda_i \in K$ for $i=1, \ldots, r$. Note that $\Loc(\D|_U)= \Loc(\D) \cap U$ is a nonempty open subset of $Y$ whose complement in $Y$ is finite. This shows that our collection $(h_P)_{P\in Y}$ satisfies the condition (3) of Definition \ref{support}.

For $\D \in \mS$ be such that locus of $\D$ is complete, by Lemma \ref{cover} we know that $\mc {E}|_{X(\D)}$ is a $T$-equivariantly trivial vector bundle. Hence, its pullback $\mc {E}|_{\t{X}(\D)}$ is $T$-equivariantly trivial. Again appealing to Lemma \ref{trivial}, we see that our collection $(h_P)_{P\in Y}$ satisfies the condition (4) of Definition \ref{support}. 

Similarly, if we have a $T$-equivariant morphism of two vector bundles $\mc E\to \mc E'$, by restricting it to $x_0$, we get a map $E\to E'$ which by Theorem \ref{th:vb-dvr} induces a morphism of the corresponding support maps. 

For the converse, let $h=(h_P)_{P \in Y}$ be a support map. 
We would like to construct an equivariant vector bundle on $X(\mf D)$ with the corresponding support map $h$. The technical part of the construction is Lemma \ref{gluing}, which will be proved later. For $\D \in \mc S$ with affine locus, let $U_{\D}$ denote the open subset of $Y$ such that there is a basis $b_{\D,1}, \ldots, b_{\D,r}$ of $E$ and $u_{\D, 1}, \ldots, u_{\D, r} \in M$ such that \[h_P(x)(\sum_{i=1}^r \lambda_i b_{\D,i})=  \min_i\{\val_P(\lambda_i)+\langle u_{\D, i}, x \rangle\}\] for all $P \in U_{\D}$ and for all $x \in \D_P$. Here $\lambda_i \in K$ for $i=1, \ldots, r$. Consider the open subset \[U= \bigcap_{\{\D \in \mS \mid \Loc(\D) \text{ is affine}\}}U_{\D}\] of $Y$. Let $X'$ be the open subset of $X(\mS)$ given by \[X'=\left(\bigcup_{\{\D \in \mS \mid \Loc(\D) \text{ is affine}\}} X(\D |_U)\right) \bigcup \left(\bigcup_{\{\D \in \mS \mid \Loc\D \text{ is complete}\}} X(\D)\right).\] We first construct a $T$-equivariant vector bundle on $X'$. Note that if $\D$ has affine locus, then $\D |_U$ is a face of $\D$. Consider the set \[\mc{S}'= \{\D|_U \mid \D \in \mS \text{ such that} \Loc(\D) \text{ is affine}\} \cup \{\D \in \mS \mid \Loc(\D) \text{ is complete}\}.\] We observe that this is a divisorial fan. By Lemma \ref{gluing} below, we have a $T$-equivariant vector bundle $\mc{E}'$ on $X'$ such that for $\D \in \mS$ with affine locus, 
\begin{equation}\label{affine}
    H^0(X(\D|_U), \mc {E}') = \bigoplus_{i=1}^r H^0(X(\D |_U), \mc O)\chi^{u_{\D, i}} \ b_{\D, i}.
\end{equation}
and for $\D \in \mS$ with complete locus
\begin{equation}\label{complete}
    H^0(X(\D), \mc {E}') = \bigoplus_{i=1}^r H^0(X(\D), \mc O)\chi^{u_{\D, i}} \ b_{\D, i}. 
\end{equation}
Note that we have the surjection 
\begin{equation}\label{surjection}
X' \sqcup
    \bigsqcup_{\substack{P \notin U \\ \Loc(\D) \text{ is affine}}}\t{X}(\D_P) \longrightarrow X(\mS).
\end{equation}
Furthermore, if $\Loc(\D$) is affine, then $\t{X}(\D)=X(\D)$ and we have the following Cartesian diagram
\[
\begin{tikzcd}
    \widetilde X(\D)_P \ar[r] \ar[d] & \widetilde X(\D) \ar[d] \\
    \Spec \mc O_{Y,P} \ar[r]       & \Loc(\D)  .
\end{tikzcd}           
           \]
Since $\Spec \mc O_{Y,P} \rar \Loc(\D)$ is flat, so is $\widetilde X(\D)_P \rar \widetilde X(\D)=X(\D)$. Also, note that $\{\D|_U \mid \D \in \mS \text{ such that}\Loc(\D) \text{ is affine}\}$ is a finite set. Hence the surjection \eqref{surjection} is, in fact, an fpqc covering of $X(\mS)$. Observe that the intersection of $\t{X}(\D_P)$ with $\t{X}(\D_{P'})$ for another $P' \notin U$ or with $X'$ is an affine toric variety inside the toric variety $\t{X}(\mS_{\eta})$. Now, for each $P \notin U$ and $\D \in \mS$ such that $\Loc(\D)$ is affine, we have the $T$-equivariant vector bundle $\mc{E}_P$ on $X(\D)_P$ associated to the piecewise affine linear map $h_P$, and we have constructed a $T$-equivariant vector bundle $\mc{E}'$ on $X'$ above. These bundles agree on the intersection $\t{X}(\mS_{\eta})$ since the restriction corresponds to the linear part of $h_P$, which is the same for all $P$. Hence, by fpqc descent, we get a $T$-equivariant vector bundle over $X(\mS)$ (see \cite[\href{https://stacks.math.columbia.edu/tag/023T}{Proposition 023T}]{St}) whose restriction to $X'$ and $\t X(\mf D_P)$ are $\mc E'$ and $\mc E_P$ respectively.

Finally, suppose we have a morphism of support maps 
$$h=(h_P:| \mc S_P | \to \t{\B}_P(E))_{P\in Y}\to 
h'=(h'_P:| \mc S_P | \to \t{\B}_P(E'))_{P\in Y}\,.$$ 
We would like to show that it corresponds to a morphism of equivariant vector bundles. 
Let $\mc E$ and $\mc E'$ be the equivariant vector bundles corresponding to $h$ and $h'$, respectively.
By Lemma \ref{gluing of morphisms} below, we get a morphism $\mc E|_{X'}\to \mc E'|_{X'}$ and by Theorem \ref{th:vb-dvr}, we get morphisms $\mc E_P\to \mc E'_P$ over the toric schemes $X(\D_P)$, for $P\notin U$ and $\Loc(\D)$ affine.  These two agree on the intersections, and as a result, again, by descent for morphisms, we get the required morphism of vector bundles $\mc E \to \mc E'$. This completes the proof.
\end{proof}

\begin{rmk}  \label{rem-tvb-special-fiber}
For almost all $P \in Y$, the polyhedral complex $\mc S_P$ coincides with its tail fan. Then the fiber $\pi^{-1}(P)$ is a toric variety over $\bf k$ corresponding to the tail fan. One can recover the piecewise linear map $\Phi_P$ associated to the restriction of $\E$ to the fiber $\pi^{-1}(P)$ from the piecewise affine map $h_P$ as follows.
Let $\Lambda_P$ be the $\mc O$-lattice representing the lattice point $h_P(0) \in \t{\B}_P(E)$. Let $E_{\Lambda_P} = \Lambda_P \otimes_{\mc O_P} \bf k$ be the corresponding $\bf k$-vector space. Then $\Phi_P: |\tail(\mc S_P)| \to \t{\B}_\sph(E_{\Lambda_P})$ is the piecewise linear map obtained from $h_P: |\mc S_P| = |\tail(\mc S_P)| \to \t{\B}_P(E)$, by identifying a neighborhood of $\Lambda_P = h_P(0)$ in $\t{\B}_P(E)$ with a neighborhood of the extended Tits building $\t{\B}_\sph(E_{\Lambda_P})$ (see Theorem \ref{th-rest-irr-comp-sp-fiber}).
\end{rmk}

It remains to state and prove Lemmas \ref{gluing} and \ref{gluing of morphisms}.

\begin{lemma}[Gluing lemma] \label{gluing}
Let $\D^1$ and $\D^2$ be two pp divisors over $Y$ such that $\D^1 \cap \D^2$ is a face of both. Assume that for each $P \in \Loc( \D^1) \cup \Loc(\D^2) $, we have a piecewise affine linear map $h_P: \D^1_P \cup  \D^2_P \rar \t{\B}_P(E)$ which is affine linear on each $ \D^j_P $. Moreover, there is a basis $b_{\D^j,1}, \ldots, b_{\D^j,r}$ of $E$ and $u_{\D^j,1}, \ldots, u_{\D^j,r} \in M$ such that \[h_P(x)(\sum_{i=1}^r \lambda_i b_{\D^j,i})=  \min_i\{\val_P(\lambda_i)+\langle u_{\D^j,i}, x \rangle\}\] for all $P \in \Loc(\D^j)$ and for all $x \in \D^j_P$ $j=1, 2$. Here $\lambda_i \in K$ for $i=1, \ldots, r$. Then we have a $T$-equivariant vector bundle $\mc E$ over $X(\D^1) \cup X(\D^2)$ such that 
\begin{equation}\label{gluingeqn}
    H^0(X(\D^j), \mc {E}) = \bigoplus_{i=1}^r H^0(X(\D^j), \mc O)\chi^{u_{\D^j, i}} \ b_{\D^j, i}
\end{equation}
for $j=1,2$.
\end{lemma}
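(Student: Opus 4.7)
The plan is to build $\mc E^1$ and $\mc E^2$ locally on each affine piece from the trivialization data, and then glue them along the overlap $X(\D^1\cap \D^2)=X(\D^1)\cap X(\D^2)$ using an isomorphism that is canonically determined on the generic fiber and that extends across the overlap precisely because the two formulas for $h_P$ are consistent on each non-empty polyhedron $\D^1_P\cap \D^2_P$.

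For the local construction: since $X(\D^j)$ is affine with $M$-graded coordinate ring $A^j := H^0(X(\D^j),\mc O)$, take $\mc E^j$ to be the equivariant vector bundle associated to the free graded $A^j$-module $M^j := \bigoplus_{i=1}^r A^j\cdot s^j_i$, where $s^j_i$ is a symbol of $M$-weight $u_{\D^j, i}$. Equivalently, $\mc E^j\cong X(\D^j)\times_\bk E$ with the diagonal $T$-action that twists the line $\bk\cdot b_{\D^j, i}$ by the character $u_{\D^j, i}$. The open embedding $T_\eta\hookrightarrow X(\D^j)$ induces an equivariant trivialization $\mc E^j|_{T_\eta}\cong \mc O_{T_\eta}\otimes_K E$ in which $s^j_i$ maps to $\chi^{u_{\D^j, i}}b_{\D^j, i}\in K[T_\eta]\otimes E$. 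By Theorem \ref{th:vb-dvr} applied to the toric scheme $\t X(\D^j)_P$ over $\mc O_{Y, P}$, the piecewise affine map associated to the pullback of $\mc E^j$ is precisely the given $h_P$ restricted to $\D^j_P$.

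Composing the two trivializations yields a canonical equivariant isomorphism $\phi_\eta\colon \mc E^1|_{T_\eta}\xrightarrow{\sim}\mc E^2|_{T_\eta}$, namely the identity on $\mc O_{T_\eta}\otimes E$. Writing the change of basis $b_{\D^1, i}=\sum_k a_{ik}b_{\D^2, k}$ in $E$ for some $(a_{ik})\in \GL(r, K)$, the map $\phi_\eta$ sends $s^1_i$ to $\sum_k c_{ik}s^2_k$ with $c_{ik} := a_{ik}\,\chi^{u_{\D^1, i}-u_{\D^2, k}}\in K[T_\eta]$. The key claim is that every $c_{ik}$ actually lies in $A^{12} := H^0(X(\D^1\cap \D^2), \mc O)$, in which case $\phi_\eta$ extends uniquely to an equivariant isomorphism $\phi\colon \mc E^1|_{X(\D^1\cap \D^2)}\xrightarrow{\sim}\mc E^2|_{X(\D^1\cap \D^2)}$. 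Using the Altmann--Hausen description $A^{12}=\bigoplus_{u\in \tau^\vee\cap M} H^0(Y, \mc O((\D^1\cap \D^2)(u)))\,\chi^u$ with $\tau=\tail(\D^1)\cap\tail(\D^2)$, this reduces, for each pair $(i, k)$ with $a_{ik}\neq 0$, to a weight condition placing $u_{\D^1, i}-u_{\D^2, k}$ in the appropriate cone and to a pointwise inequality of the form $\val_P(a_{ik})+\mathrm{eval}_{u_{\D^1, i}-u_{\D^2, k}}(\D^1_P\cap \D^2_P)\ge 0$ for all $P\in Y$. Both drop out of equating, on every non-empty $\D^1_P\cap \D^2_P$, the two expressions for $h_P(x)(b_{\D^1, i})$: the $\D^1$-recipe gives $\langle u_{\D^1, i}, x\rangle$, while the $\D^2$-recipe after substituting the change of basis gives $\min_k\{\val_P(a_{ik})+\langle u_{\D^2, k}, x\rangle\}$. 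The linear (tail-cone) portion of this identity pins down the weight condition, and its affine part at the vertices of $\D^1_P\cap\D^2_P$ pins down the divisor inequality at each $P$. Running the symmetric argument with the inverse change of basis shows that $\phi_\eta^{-1}$ extends too, so $\phi$ is an equivariant isomorphism; gluing $\mc E^1, \mc E^2$ along $\phi$ then produces a $T$-equivariant vector bundle on $X(\D^1)\cup X(\D^2)$ satisfying \eqref{gluingeqn}.

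The main technical obstacle is the extension step: reading off both the weight inclusion and the divisor inequality from the piecewise-affine compatibility of the $h_P$, and checking them uniformly in $P\in Y$. The condition is automatic at $P\notin \Loc(\D^1)\cup \Loc(\D^2)$, and at all but finitely many remaining points only the common linear part of $h_P$ contributes, so the essential work is concentrated at finitely many $P$, each demanding careful bookkeeping of how the tail-cone and bounded-vertex data of $\D^1_P\cap \D^2_P$ translate into conditions on the rational functions $a_{ik}\in K$. Once this is in place, the verification of the inverse direction of $\phi$ and the sheaf-theoretic gluing are routine.
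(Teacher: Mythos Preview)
Your overall plan matches the paper's: build the free equivariant bundles $\mc E^j$ on $X(\D^j)$ from the trivialization data and show they agree on the overlap $X(\D^1\cap\D^2)$. The paper likewise shows directly that the two free $A^{12}$-submodules of $K[T_\eta]\otimes E$ coincide, graded piece by graded piece.

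However, the step where you claim the transition entries $c_{ik}=a_{ik}\chi^{u_{\D^1,i}-u_{\D^2,k}}$ lie in $A^{12}$ has a direction problem. Equating the two formulas for $h_P(x)(b_{\D^1,i})$ gives, for each $k$ with $a_{ik}\ne0$ and every $x\in\D^1_P\cap\D^2_P$,
\[
\langle u_{\D^1,i}-u_{\D^2,k},\,x\rangle \;\le\; \val_P(a_{ik}).
\]
Sending $x$ to infinity along the tail cone forces $u_{\D^1,i}-u_{\D^2,k}\in -\tau^\vee$, not $\tau^\vee$; and at vertices you obtain $\val_P(a_{ik})+\mathrm{eval}_{u_{\D^2,k}-u_{\D^1,i}}(\D^1_P\cap\D^2_P)\ge 0$, i.e.\ $a_{ik}\chi^{u_{\D^2,k}-u_{\D^1,i}}\in A^{12}$ rather than the $a_{ik}\chi^{u_{\D^1,i}-u_{\D^2,k}}\in A^{12}$ that you need. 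So the inequality you extract points the wrong way for the containment $M^1\subset M^2$, and the ``symmetric argument'' with the inverse change of basis yields the analogous statement for the entries $\tilde a_{ki}$ of the inverse matrix, not for the $a_{ik}$. These two one-sided facts do not combine to show that the transition matrix \emph{and} its inverse have entries in $A^{12}$.

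The paper sidesteps this: rather than testing the identity at basis vectors, it compares the two submodules of $K[T_\eta]\otimes E$ directly. For each $P$ and each ray $\rho$ of $c(\D^1_P\cap\D^2_P)$ (coming either from a vertex $v$ or a tail ray $w$), Theorem~\ref{th:vb-dvr} supplies an actual equivariant bundle $\mc E_P$ on the toric scheme $\U_{\D^1_P}\cup\U_{\D^2_P}$; hence the $u$-th graded piece $\Lambda_{\rho,u}$ of $H^0(\U_\rho,\mc E_P)$ can be computed from either trivialization, yielding an \emph{equality} of $\mc O_{Y,P}$-lattices at each ray (equations \eqref{Case 1} and \eqref{tail}). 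Intersecting over all vertex rays and tail rays then gives the equality \eqref{transition} of graded components. The moral is that two adapted bases for the same additive norm give a lattice identity, not a collection of entrywise inequalities, and the paper's detour through the DVR classification is what encodes exactly that.
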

\begin{proof}
    For each $ j=1,2$, consider the trivial vector bundle $\mc {E}_{\D^j}$ on $X(\D^j)$ associated to the trivial module \[ H^0(X(\D^j), \mc {E}_{\D^j}) = \bigoplus_{i=1}^r H^0(X(\D^j), \mc O)\chi^{u_{\D^j, i}} \ b_{\D^j, i}.\] We show that the modules agree on the intersection $X(\D^1) \cap X(\D^2)=X(\D^1 \cap \D^2)$. It is enough to show that 
    \begin{equation}\label{component}
        H^0(X(\D^1 \cap \D^2), \mc {E}_{\D^1})_u= H^0(X(\D^1 \cap \D^2), \mc {E}_{\D^2})_u
    \end{equation}
    for all $u \in (\tail(\D^1 \cap \D^2))^{\vee} \cap M$. Now, we have 
    \begin{equation}\label{component2}
    \begin{split}
         H^0(X(\D^1 \cap \D^2), \mc {E}_{\D^j})_u&= \bigoplus_{\substack{i \text{ such that }\\ u-u_{\D^j, i}\in (\tail(\D^1 \cap \D^2))^{\vee} \cap M} } H^0(\Loc(\D^1 \cap \D^2), \mc {O}(\D^1 \cap \D^2)(u-u_{\D^j, i}))b_{\D^j,i}\\
         &= \bigoplus_{\substack{i \text{ such that }\\ u-u_{\D^j, i}\in (\tail(\D^1 \cap \D^2))^{\vee} \cap M} } \left[ \bigcap_{P \in \Loc(\D^1 \cap \D^2)}\mc{O}_{Y, P} \varpi_P^{- \lfloor \langle u-u_{\D^j, i},  \D^1_P \cap \D^2_P \rangle \rfloor} b_{\D^j,i}.\right]
    \end{split}
        \end{equation}
Here $\varpi_P$ is a uniformizing parameter of $\mc{O}_{Y, P}$. For $P \in \Loc(\D^1 \cap \D^2)$ and a fixed $i$, we have that 
\[\langle u-u_{\D^j, i},  \D^1_P \cap \D^2_P \rangle = \min_{\substack{v \text{ vertex of }\\ \D^1_P \cap \D^2_P}}\langle u-u_{\D^j, i},  v\rangle \]
This gives that for all $P \in \Loc(\D^1 \cap \D^2)$,
\begin{equation}\label{minimum}
    \begin{split}
        \mc{O}_{Y, P} \varpi_P^{- \lfloor\langle u-u_{\D^j, i},  \D^1_P \cap \D^2_P \rangle \rfloor} &=
        \bigcap_{\substack{v \text{ vertex of }\\ \D^1_P \cap \D^2_P}}\mc{O}_{Y, P} \varpi_P^{- \lfloor\langle u-u_{\D^j, i},  v \rangle \rfloor}\\
        &=\bigcap_{\substack{v \text{ vertex of }\\ \D^1_P \cap \D^2_P}}\mc{O}_{Y, P} \varpi_P^{ \lceil \langle u_{\D^j, i}-u,  v \rangle \rceil}.
    \end{split}
\end{equation}

Thus, by \eqref{component}, \eqref{component2} and \eqref{minimum}, it is enough to prove that for all $P \in \Loc(\D^1 \cap \D^2)$ and any vertex $v$ of $\D^1_P \cap \D^2_P$, the following holds 
\begin{equation}\label{transition}
   \bigoplus_{\substack{i \text{ such that }\\ u-u_{\D^1, i}\in (\tail(\D^1 \cap \D^2))^{\vee} \cap M} } \mc{O}_{Y, P} \varpi_P^{ \lceil \langle u_{\D^1, i}-u,  v \rangle \rceil}b_{\D^1,i}= \bigoplus_{\substack{i \text{ such that }\\ u-u_{\D^2, i}\in (\tail(\D^1 \cap \D^2))^{\vee} \cap M} } \mc{O}_{Y, P} \varpi_P^{ \lceil \langle u_{\D^2, i}-u,  v \rangle \rceil}b_{\D^2,i}.
\end{equation}

However, by Theorem \ref{th:vb-dvr}, for each $P \in \Loc(\D^1 \cap \D^2)$, we have a $T$-equivariant vector bundle $\mc{E}_P$ on the toric scheme $\U_{\D^1_P} \cup \U_{\D^2_P}$ associated to the map $h_P$. Moreover, we have 
\begin{equation}\label{toric-scheme-converse}
    H^0(\U_{\D^j_P}, \mc {E}_P) = \bigoplus_{i=1}^r H^0(\U_{\D^j_P}, \mc O)\chi^u_{\D^j,i} \ b_{\D^j, i}
\end{equation}

Fix a point $P \in \Loc(\D^1 \cap \D^2)$ and let $v$ be a vertex of $\D^1_P \cap \D^2_P$. Let $\rho_v$ be the ray generated by $v$ in $N_{\Q} \times \Q_{\geq 0}$. Thus the intersection of $\rho_v$ with $N_{\Q} \times \{1\}$ is $(v,1)$. Let $\Lambda_{\rho_v,u}$ denote the $u^{\text{th}}$-component of the module $ H^0(\U_{\rho_v}, \mc {E}_P)$. Since $\U_{\rho_v}$ is an open subset of both $\U_{\D^1_P}$ and $\U_{\D^2_P}$, by \eqref{affine-toric-scheme} and \eqref{toric-scheme-converse}, we have 
\begin{equation}\label{ray}
\begin{split}
    \Lambda_{\rho_v,u}&= \bigoplus_{i=1}^r \bigoplus_{\{k' \in \Q \mid  (u-u_{\D^1, i},k') \in \rho_v^{\vee} \cap \t{M}\}}\mc{O}_{Y, P} \varpi_P^{k'} \ b_{\D^1, i}\\
    &= \bigoplus_{i=1}^r \bigoplus_{\{k' \in \Q \mid  (u-u_{\D^2, i},k') \in \rho_v^{\vee} \cap \t{M}\}}\mc{O}_{Y, P} \varpi_P^{k'} \ b_{\D^2, i}.
\end{split}
    \end{equation}
Then following the arguments in \cite[Case-1, page 24]{KMT}, we have
\begin{equation*}
    \begin{split}
       (u-u_{\D^j, i},k') \in \rho_v^{\vee} & \Leftrightarrow 
       \langle u-u_{\D^j, i}, v \rangle + k' \geq 0 \\
       & \Leftrightarrow k' \geq \langle u_{\D^j, i}-u, v \rangle \\
       & \Leftrightarrow k' \geq \lceil \langle u_{\D^j, i}-u, v \rangle \rceil.
    \end{split}
\end{equation*}
This shows that
\begin{equation}\label{Case 1}
  \bigoplus_{i=1}^r \mc{O}_{Y, P} \varpi_P^{ \lceil \langle u_{\D^1, i}-u,  v \rangle \rceil}b_{\D^1,i}= \bigoplus_{i=1}^r \mc{O}_{Y, P} \varpi_P^{ \lceil \langle u_{\D^2, i}-u,  v \rangle \rceil}b_{\D^2,i}.
\end{equation}
Observe that $\D^j_P$ and $\D_j$ have the same tail cone by definition, and hence $\D_P^1 \cap \D_P^2$ and $\D^1 \cap \D^2$ have the same tail cone. Let $w$ be a primitive ray generator of $\tail(\D^1 \cap \D^2)$, considered in the vector space $N_{\Q}$. Let us denote by $\rho_w$, the ray generated by $w$ in $N_{\Q} \times \{0\}$. This is a ray of the cone over $\D_P^1 \cap \D_P^2$. Thus the intersection of $\rho_w$ with $N_{\Q} \times \{0\}$ is $(w,0)$. Now, following the arguments \cite[Case-2, page 24]{KMT}, we see that
\begin{equation*}
    (u-u_{\D^j, i},k') \in \rho_w^{\vee} \Leftrightarrow 
       \langle u-u_{\D^j, i}, w \rangle \geq 0
\end{equation*}
Since $K= \bigcup_{k' \in  \Z}\mc{O}_{Y, P} \varpi_P^{k'}$, by arguments similar to \eqref{ray}, we get:
\begin{equation}\label{Case 2}
   \Lambda_{\rho_w,u}= \bigoplus_{\substack{i \text{ such that }\\ \langle u-u_{\D^1, i}, w \rangle \geq 0}} K \ b_{\D^1,i}= \bigoplus_{\substack{i \text{ such that }\\ \langle u-u_{\D^2, i}, w \rangle \geq 0}} K \ b_{\D^2,i}.
\end{equation}
Taking intersection of \eqref{Case 2} over all the ray generators of $\tail(\D^1 \cap \D^2))$, we get 
\begin{equation}\label{tail}
\bigoplus_{\substack{i \text{ such that }\\  u-u_{\D^1, i}\in (\tail(\D^1 \cap \D^2))^{\vee} \cap M}} K \ b_{\D^1,i}= \bigoplus_{\substack{i \text{ such that }\\ u-u_{\D^2, i} \in (\tail(\D^1 \cap \D^2))^{\vee} \cap M}} K \ b_{\D^2,i}. 
\end{equation}
Thus, intersecting \eqref{Case 1} with \eqref{tail}, we obtain \eqref{transition}. This proves the Lemma.
\end{proof}

Now assume we have two vector spaces $E$ and $E'$ of rank $r$ and $r'$, respectively. 
Assume that we have two families of piecewise affine linear maps $$h_P:  \D^1_P \cup  \D^2_P \rar \t{\B}_P(E)$$ 
$$h'_P: \D^1_P \cup \D^2_P \rar \t{\B}_P(E')$$
for each $P \in \Loc(\D^1) \cup \Loc(\D^2) $ as in Lemma \ref{gluing}, i.e. there are bases $b_{\D^j,1}, \ldots, b_{\D^j,r}$ of $E$, $b'_{\D^j,1}, \ldots, b'_{\D^j,r'}$ of $E'$,
and $u_{\D^j,1}, \ldots, u_{\D^j,r} \in M$, $u'_{\D^j,1}, \ldots, u'_{\D^j,r'} \in M$ such that 
\[h_P(x)(\sum_{i=1}^r \lambda_i b_{\D^j,i})=  \min_i\{\val_P(\lambda_i)+\langle u_{\D^j,i}, x \rangle\}\] and
\[h'_P(x)(\sum_{i=1}^{r'} \lambda_i b'_{\D^j,i})=  \min_i\{\val_P(\lambda_i)+\langle u'_{\D^j,i}, x \rangle\}\]
for all $P \in \Loc(\D^j)$, for all $x \in \D^j_P$ $j=1, 2$. By Lemma \ref{gluing}, we have two vector bundles $\mc E$ and $ \mc E'$ on $X(\D^1) \cup X(\D^2)$. 
\begin{lemma}[Morphism gluing lemma] \label{gluing of morphisms}
     Suppose we have a morphism of $K$-vector spaces $\phi:E\to E'$ such that $\phi$ gives a morphism of piecewise affine maps $h_P\to h'_P$, $\forall~P\in {Loc}(\D^1) \cup {\Loc}(\D^2)$. Then we have a morphism of vector bundles $\mc E\to \mc E'$ whose fiber at $x_0$ is the map $\phi$.
\end{lemma}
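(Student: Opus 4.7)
The plan is to construct the desired morphism $\mc E \to \mc E'$ piece by piece on each affine chart $X(\D^j)$, $j = 1, 2$, directly imitating the strategy used to build $\mc E$ and $\mc E'$ in Lemma \ref{gluing}, and then glue the two resulting morphisms on $X(\D^1) \cap X(\D^2) = X(\D^1 \cap \D^2)$. Writing $\phi(b_{\D^j,i}) = \sum_{k=1}^{r'} c^j_{ik}\, b'_{\D^j,k}$ with $c^j_{ik} \in K$, the first step is to define an $H^0(X(\D^j), \mc O)$-linear map
\[
\phi_j \colon H^0(X(\D^j), \mc E) \;\longrightarrow\; H^0(X(\D^j), \mc E')
\]
by specifying $\chi^{u_{\D^j,i}} b_{\D^j,i} \mapsto \chi^{u_{\D^j,i}} \phi(b_{\D^j,i}) = \sum_k c^j_{ik}\, \chi^{u_{\D^j,i}} b'_{\D^j,k}$, and then extending $H^0(X(\D^j), \mc O)$-linearly.

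The key check is that the right-hand side actually lies in $H^0(X(\D^j), \mc E') = \bigoplus_k H^0(X(\D^j), \mc O)\, \chi^{u'_{\D^j,k}} b'_{\D^j,k}$. Rewriting each summand as $c^j_{ik} \chi^{u_{\D^j,i} - u'_{\D^j,k}} \cdot (\chi^{u'_{\D^j,k}} b'_{\D^j,k})$, this reduces to the claim that $c^j_{ik} \chi^{u_{\D^j,i} - u'_{\D^j,k}}$ is a global section of $\mc O$ on $X(\D^j)$; that is, $u_{\D^j,i} - u'_{\D^j,k} \in \tail(\D^j)^\vee \cap M$ whenever $c^j_{ik} \neq 0$, and $c^j_{ik} \in H^0(\Loc(\D^j), \mc O(\D^j(u_{\D^j,i} - u'_{\D^j,k})))$. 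Both of these are read off from the hypothesis that $\phi$ is a morphism of support maps: for each $P \in \Loc(\D^j)$ and $x \in \D^j_P$, the inequality $h_P(x)(b_{\D^j,i}) \leq h'_P(x)(\phi(b_{\D^j,i}))$ unravels (using Definition \ref{support}) to $\langle u_{\D^j,i}, x\rangle \leq \val_P(c^j_{ik}) + \langle u'_{\D^j,k}, x\rangle$ for each $k$ with $c^j_{ik} \neq 0$. Letting $x$ go to infinity along the tail cone of $\D^j_P$ forces the cone condition on $u_{\D^j,i} - u'_{\D^j,k}$, while specializing $x$ to each vertex of $\D^j_P$ recovers exactly the pole bound on $\val_P(c^j_{ik})$ needed to place $c^j_{ik}$ in $H^0(\Loc(\D^j), \mc O(\D^j(u_{\D^j,i} - u'_{\D^j,k})))$. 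This is the same vertex-plus-tail-cone reduction already used in the proof of Lemma \ref{gluing}.

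The remaining step is to verify that $\phi_1$ and $\phi_2$ agree on $X(\D^1 \cap \D^2)$ and together furnish a $T$-equivariant morphism of sheaves. Both $\phi_j$ preserve the $M$-grading by construction, so each is automatically $T$-equivariant. On the dense open orbit $T_\eta \subset X(\D^1) \cap X(\D^2)$ each of $\mc E$, $\mc E'$ is canonically trivialized as $\mc O_{T_\eta} \otimes E$, respectively $\mc O_{T_\eta} \otimes E'$, and both $\phi_1|_{T_\eta}$ and $\phi_2|_{T_\eta}$ become the single map $\operatorname{id} \otimes \phi$; since $\mc E'$ is torsion-free on the integral scheme $X(\D^1 \cap \D^2)$, any two morphisms $\mc E \to \mc E'$ that coincide on $T_\eta$ coincide on all of $X(\D^1 \cap \D^2)$. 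Hence $\phi_1$ and $\phi_2$ glue to a morphism $\mc E \to \mc E'$ on $X(\D^1) \cup X(\D^2)$, and its fiber at $x_0$ is $\phi$ because each generator $\chi^{u_{\D^j,i}} b_{\D^j,i}$ evaluates to $b_{\D^j,i}$ at $x_0$.

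The main obstacle is the second step above: converting the abstract morphism-of-support-maps condition into concrete divisorial pole bounds on the coefficients $c^j_{ik}$. However this is essentially dual to the calculation carried out in the proof of Lemma \ref{gluing}: once one observes that the tail-behavior of the inequality controls the $\tail(\D^j)^\vee$-membership of $u_{\D^j,i} - u'_{\D^j,k}$ while the vertex-behavior controls the $P$-adic valuations, the verification becomes routine and the rest of the argument is formal.
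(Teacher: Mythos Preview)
Your approach is correct and closely parallels the paper's, but the execution differs. Both proofs verify that the linear map $\phi$, extended to $E \otimes \bk[M] \to E' \otimes \bk[M]$, carries $H^0(X(\D^j), \mc E)$ into $H^0(X(\D^j), \mc E')$. The paper does this by expressing each graded piece $H^0(X(\D^j), \mc E)_u$ as an intersection $\bigcap_\rho \Lambda_{\rho,u}$ over the rays $\rho$ of $c(\D^j_P)$ (reusing the decomposition already established in the proof of Lemma~\ref{gluing}) and then invoking \cite[Proposition~2.9]{KMT} for the blanket statement that a morphism of piecewise affine maps sends $\Lambda_{\rho,u}$ into $\Lambda'_{\rho,u}$. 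You instead expand $\phi(b_{\D^j,i})$ in the primed basis and read off the divisorial pole bounds on the coefficients $c^j_{ik}$ directly from the inequality $h_P(x)(b_{\D^j,i}) \le h'_P(x)(\phi(b_{\D^j,i}))$ via the vertex-and-tail reduction. Your argument is more elementary and self-contained, avoiding the external citation; the paper's is shorter precisely because the ray-wise lattice preservation has already been packaged in \cite{KMT}. Your gluing step, arguing that $\phi_1$ and $\phi_2$ agree because they coincide with $\mathrm{id}\otimes\phi$ on the dense torus $T_\eta$ and $\mc E'$ is torsion-free, is a clean alternative to the paper's approach of rerunning the graded-module check on $X(\D^1\cap\D^2)$.
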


\begin{proof}
    We need to show that under the induced map 
    $E\otimes \bk[M]\to E'\otimes \bk[M]$, the modules 
    $H^0(X(\D^j), \mc {E})$ map to $H^0(X(\D^j), \mc {E'})$ for $j=1,\,2$, and $H^0(X(\D^1 \cap \D^2), \mc {E})$ map to $H^0(X(\D^1\cap \D^2), \mc {E'})$. We check this only for $j=1$; the other cases are similar. 
    As before, it is enough to check that $H^0(X(\D^1), \mc {E})_u$ maps to $H^0(X(\D^1), \mc {E'})_u$ for any $u\in M$. Now, the same argument as in Lemma \ref{gluing} shows that 
     \begin{equation}
    \begin{split}
         H^0(X(\D^1), \mc {E})_u&= \Bigg[ \bigcap_{\substack{w \text{ primitive} \\ \text{generator of } \\ \tail(\D^1)^{\vee} \cap M}} \Lambda_{\rho_w,u} \Bigg] \bigcap \Bigg[ \bigcap_{P\in \Loc(\D^1)} \bigcap_{\substack{v \text{ vertex} \\ \text{of } \D_P}}  \Lambda_{\rho_v,u} \Bigg].
    \end{split}
        \end{equation}
        Here, $\rho_w$ is the ray generated by $(w,0)$ in $N_{\mb Q}\times \mb Q_{\geq 0}$, and for a vertex $v$ of $\D_P$, the ray $\rho_v$ is the ray generated by $(v,1)$. Recall that these rays together generate the cone $c(\widetilde{\D}_P)$. For a ray $\rho$ of $c(\widetilde{\D}_P)$, the module $\Lambda_{\rho,u}$ is $u$-th component of the $H^0(X(\rho), \mc E_P)$ where $\mc E_P$ is the vector bundle on $X(\D_P)$ corresponding to $h_P$. Similarly, we have that
        \begin{equation}
        \begin{split}
         H^0(X(\D^1), \mc {E'})_u&= \Bigg[ \bigcap_{\substack{w \text{ primitive} \\ \text{generator of } \\ \tail(\D^1)^{\vee} \cap M}} \Lambda'_{\rho_w,u} \Bigg] \bigcap \Bigg[ \bigcap_{P\in \Loc(\D^1)} \bigcap_{\substack{v \text{ vertex} \\ \text{of } \D_P}}  \Lambda'_{\rho_v,u} \Bigg]
    \end{split}
    \end{equation}
    Since $h_P\to h'_P$ is a morphism of piecewise affine linear maps for all $P\in \Loc(\D^1) \cup \Loc(\D^2)$, under the map $\phi$, the modules $\Lambda_{\rho_w,u}$ and $\Lambda_{\rho_v,u}$ map to $\Lambda'_{\rho_w,u}$ and $\Lambda'_{\rho_{v},u}$ respectively \cite[Proposition 2.9]{KMT}. From this, it follows that $H^0(X(\D^1), \mc {E})_u$ maps to $H^0(X(\D^1), \mc {E}')_u$. This completes the proof of this lemma.    
\end{proof}

\section{Applications}  \label{sec-applications}
In this section we present two applications of the support maps and the main theorem (\ref{thm-main1}), namely, a criterion for 
splitting of an equivariant vector bundle into a sum of line bundles, and a description of the space of global sections. 

\subsection{Criterion for equivariant splitting}  \label{sec-split}
We say that a torus equivariant vector bundle is \emph{equivariantly split} (or just \emph{split} for short) if it is equivariantly isomorphic to a sum of equivariant line bundles. 

\begin{theorem}  \label{th-splitting}
    Let $\E$ be a $T$-equivariant vector bundle over $X(\mS)$ and let $h$ be the associated support map. Then $\E$ splits equivariantly if and only if there exists a basis $B$ of $E$, the fiber of $\E$ over $x_0$, such that each $h_P$ factors through $\t{A}_P(B)$ for all $P \in Y$.
\end{theorem}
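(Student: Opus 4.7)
The plan is to identify equivariant splittings of $\E$ with frames of $E = \E_{x_0}$ through which the support map factors: given $\E = \bigoplus_{i=1}^r \mc L_i$, the fibers $L_i := (\mc L_i)_{x_0}$ form such a frame, and conversely such a frame will be seen to yield line bundles via rank-one components of $h$.

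For the \emph{only if} direction, suppose $\E \cong \bigoplus_i \mc L_i$ equivariantly and set $L_i := (\mc L_i)_{x_0}$, choosing $b_i \in L_i$ to obtain a basis $B = \{b_1, \ldots, b_r\}$ of $E$. By functoriality in Theorem \ref{thm-main1}, the support map of a direct sum is the coordinate-wise minimum of the summands' support maps:
\[
h_P(x)\!\left(\sum_i \lambda_i b_i\right) = \min_i\{h^{(\mc L_i)}_P(x)(\lambda_i b_i)\}.
\]
This is exactly the statement that $h_P$ is adapted to the frame $\{K b_i\}$, i.e.\ it factors through $\t{A}_P(B)$.

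For the \emph{if} direction, assume each $h_P$ factors through $\t{A}_P(B)$ for a common basis $B = \{b_1, \ldots, b_r\}$. Set $f_{P,i}(x) := h_P(x)(b_i)$ and define candidate rank-one support maps $h^{(i)}$ on $L_i := K b_i$ by $h^{(i)}_P(x)(\lambda b_i) := \val_P(\lambda) + f_{P,i}(x)$. Conditions (1) and (2) of Definition \ref{support} are inherited directly from those for $h$: each $f_{P,i}$ is affine on every polyhedron of $\mc S_P$, and its linear part is the $i$-th coordinate of the $P$-independent $\lin(h_P)$ in the corresponding apartment of $\t{\B}_{\sph}(E)$. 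Once conditions (3) and (4) are verified, Theorem \ref{thm-main1} produces equivariant line bundles $\mc L_i$ with support maps $h^{(i)}$, and the coordinate-wise formula above forces $\bigoplus_i \mc L_i$ to have support map $h$, so $\E \cong \bigoplus_i \mc L_i$ equivariantly.

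The main technical step is the verification of (3) and (4) for each $h^{(i)}$. For $\D \in \mc S$, condition (3) or (4) applied to $h$ yields a basis $\{b_{\D,j}\}$ of $E$ and weights $\{u_{\D,j}\} \subset M$; writing $b_i = \sum_j \alpha_{ij} b_{\D,j}$ gives $f_{P,i}(x) = \min_j\{\val_P(\alpha_{ij}) + \langle u_{\D,j}, x\rangle\}$ on $\D_P$. By condition (1) this minimum collapses to a single affine function on $\D_P$, attained by some index $j^* = j^*(i,\D,P)$. The $P$-independence of $\lin(h_P)$ forces $u_{\D,j^*}|_{\tail(\D)}$ to be $P$-independent, producing a well-defined $u^{(i)}_\D \in M$; the residual constant term equals $\val_P(\alpha_{i,j^*})$, which vanishes for all but finitely many $P$ because the $\alpha_{ij}$ are finitely many elements of $K^*$. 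This directly supplies $b^{(i)}_\D = b_i$ and $u^{(i)}_\D$ in the affine-locus case. The complete-locus case requires the further observation that the character decomposition of the equivariantly trivial bundle $\mc E|_{X(\D)}$ (from Lemma \ref{cover}) is compatible with the frame $\{K b_i\}$: a common-apartment hypothesis forces each $b_{\D,j}$ to be a $K^*$-multiple of some $b_i$, so one can absorb the constant into a global choice of $b^{(i)}_\D \in L_i$. This compatibility between global triviality data and the common-apartment condition in the complete-locus case is the main obstacle; handling it is the core computation that makes the rank-one pieces assemble into genuine line bundles.
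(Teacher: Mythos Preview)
Your argument follows the same two-step strategy as the paper: for the forward direction, read off that the support map of a direct sum lands in the common apartment $\t{A}_P(B)$; for the converse, restrict $h_P$ to each line $Kb_i$ to obtain rank-one data $h^{(i)}$, invoke Theorem \ref{thm-main1} to produce line bundles $\mc L_i$, and then compare support maps. The paper's proof of the converse is considerably terser than yours: it simply asserts that $h^i = (h^i_P)_{P \in Y}$ lies in $\mc C(\mc S)$ and moves on, without checking conditions (3) and (4) of Definition~\ref{support}. Your attempt to supply this verification is the only substantive difference.

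That attempt, however, contains an incorrect step. In the complete-locus case you claim that the common-apartment hypothesis forces each $b_{\D,j}$ to be a $K^*$-multiple of some $b_i$, i.e.\ that the trivialization frame must coincide with the frame spanned by $B$. This is false already when $N=0$ and all $u_{\D,j}=0$: then each $h_P(x)$ is the standard additive norm attached to the $\mc O_P$-lattice $\sum_j \mc O_P\, b_{\D,j}$, and for any $B$ obtained from $\{b_{\D,j}\}$ by a matrix in $\GL_r(\bk)$ this lattice is equally well adapted to $B$, yet the two frames are genuinely different. So the ``core computation'' you flag cannot proceed by identifying frames. What is actually needed is a more indirect argument showing that the constant part of $f_{P,i}$ is of the form $-\val_P(\gamma_i)$ for a single $\gamma_i \in K^*$; one route is to note that the putative $\mc L_i$, being a rank-one summand of the equivariantly trivial bundle $\E|_{X(\D)}$, must itself be equivariantly trivial over $X(\D)$. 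The paper sidesteps all of this by asserting the conclusion, so your proof is not less complete than the paper's---but the specific justification you propose for condition~(4) does not work as stated.
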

\begin{proof}
    Let $\E \cong \mc{L}_1\oplus \cdots \oplus \mc{L}_r$ where $\mc{L}_1, \ldots, \mc{L}_r$ are $T$-equivariant line bundles on $X(\mS)$ with associated support maps $h^1=(h^1_P)_{P \in Y}, \, \ldots, \, h^r=(h^r_P)_{P \in Y}$. Choose an identification of $E$ with $K^r$ via a basis $B=\{b_1, \ldots, b_r\}$. Then the support map associated to $\mc{L}_1\oplus \cdots \oplus \mc{L}_r$ is defined by $h=(h_P)_{P \in Y}$, where $h_P: | \mc{S}_P| \rar \t{\B}_P(E)$ is defined by \[h_P(x)(\sum_{i=1}^r\lambda_ib_i)= \min_i\{\val_P(\lambda_i)+ h_P^i(b_i)\},\] for all $x \in | \mc{S}_P|$. By Theorem \ref{thm-main1}, the forward direction follows.

    For the converse, let $B=\{b_1, \ldots, b_r\}$ be a basis of $E$ satisfying the condition in the statement. Fix $i \in \{1, \ldots, r\}$ and define 
    \begin{equation*}
        \begin{split}
            h^i_P:| \mc{S}_P| & \rar \t{A}_P(b_i)\\
            x & \mapsto h_P(x) |_{Kb_i}.
        \end{split}
    \end{equation*}
    Then $h^i=(h^i_P)_{P \in Y}$ defines an element of $\mc{C}(\mS)$. By Theorem \ref{thm-main1}, we  have the corresponding $T$-equivariant line bundle $\mc{L}_i$ on $X(\mS)$. Now, we see that $h$ coincides with the support map of $\mc{L}_1\oplus \cdots \oplus \mc{L}_r$, by the forward direction. Hence, by Theorem \ref{thm-main1}, we have $\E \cong \mc{L}_1\oplus \cdots \oplus \mc{L}_r$.
\end{proof}

\subsection{Space of global sections}  \label{sec-global-sec}
Let $\E$ be a $T$-equivariant vector bundle on a $T$-variety $X=X(\mS)$.  Then the natural $T$-action on the global sections gives the $T$-isotypical decomposition $H^0(X(\mS), \E) = \bigoplus_{u \in M} H^0(X, \E)_u$. We have the following combinatorial description of global sections of $\E$.

\begin{theorem}\label{thm-global}
Let $\E$ be a $T$-equivariant vector bundle on a complexity-one $T$-variety $X=X(\mS)$ with associated support map $h=(h_P: |\mS_P| \rar \t{\B}_P(E))_{P \in Y}$. Then 
    \begin{equation}\label{global}
\begin{split}
    H^0(X, \E)_u=\{e \in E \mid  \text{ for all } P \in Y, \, \langle u, v \rangle &\leq h_P(v)(e),v \text{ vertex of } \mS_P\\ \text{ and } \langle u, w \rangle &\leq \lin(h_P)(w)(e), w \in \tail(\mS_P)(1) \}. 
\end{split}
     \end{equation}
\end{theorem}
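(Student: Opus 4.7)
The plan is to realize $H^0(X, \E)_u$ as an intersection, indexed by closed points $P \in Y$, of the global-section spaces of the toric vector bundles $\mc{E}_P$ on the toric schemes $\t{X}(\mS)_P$, and then to apply the sections formula for toric vector bundles over a DVR from \cite{KMT} pointwise.

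First, I would restrict to the generic orbit: since $\E|_{T_\eta}$ is equivariantly trivial with fiber $E$, the inclusion $H^0(X, \E) \hookrightarrow H^0(T_\eta, \E|_{T_\eta}) = K[T_\eta]\otimes E$ realizes $H^0(X, \E)_u$ as a $\bk$-subspace of $\chi^u \otimes E \cong E$, so the task becomes characterizing those $e \in E$ for which $\chi^u \otimes e$ extends to a regular section of $\E$. To transfer the extension question to $\t{X}(\mS)$: since $r \colon \t{X}(\mS) \to X(\mS)$ is a proper birational contraction of normal varieties and $\E$ is locally free, $H^0(X, \E) = H^0(\t{X}(\mS), r^*\E)$, which via the good quotient $\pi \colon \t{X}(\mS) \to Y$ equals $H^0(Y, \pi_* r^*\E)$. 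The sheaf $\pi_* r^*\E$ is torsion-free on the smooth curve $Y$ (its generic fiber embeds into $K[T_\eta] \otimes E$), so a rational section is global if and only if it lies in every stalk $(\pi_* r^*\E)_P$. Flat base change along the diagram \eqref{equ-tild-X-P} identifies this stalk with $H^0(\t{X}(\mS)_P, \mc{E}_P)$, hence
\[
H^0(X, \E)_u \;=\; \bigcap_{P \in Y}\, H^0\!\bigl(\t{X}(\mS)_P,\, \mc{E}_P\bigr)_u
\]
inside $\chi^u \otimes E$.

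Next, I would apply the pointwise description. By Lemma \ref{lem-slice-toric-scheme}, $\t{X}(\mS)_P$ is the toric scheme over $\Spec \mc{O}_{Y,P}$ associated to the fan $c(\mS_P)$, and by Theorem \ref{thm-main1}(2) the bundle $\mc{E}_P$ is the toric vector bundle classified by the piecewise affine map $h_P$. The DVR analogue of Klyachko's global-sections formula established in \cite{KMT} then identifies $H^0(\t{X}(\mS)_P, \mc{E}_P)_u$ with the set of $e \in E$ satisfying $\langle u, v\rangle \leq h_P(v)(e)$ for every vertex $v$ of $\mS_P$ and $\langle u, w\rangle \leq \lin(h_P)(w)(e)$ for every ray $w \in \tail(\mS_P)(1)$. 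Intersecting over all $P \in Y$ yields precisely the right-hand side of \eqref{global}.

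The main obstacle is the middle step: the identification of $H^0(\t{X}(\mS), r^*\E)$ with $\bigcap_P H^0(\t{X}(\mS)_P, \mc{E}_P)$, which rests on the torsion-freeness of $\pi_* r^*\E$ on $Y$ together with the standard principle that global sections of a torsion-free coherent sheaf on a smooth curve equal the intersection of its stalks inside the generic fiber. Everything else reduces to the pointwise appeal to the DVR formula in \cite{KMT} and the dictionary between $\mc{E}_P$ and $h_P$ already established in Theorem \ref{thm-main1}.
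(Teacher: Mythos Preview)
Your argument is correct but follows a genuinely different route from the paper. The paper proves the result as a direct formal consequence of the equivalence of categories in Theorem~\ref{thm-main1}: an element of $H^0(X,\E)_u$ is a $T$-equivariant morphism $\mc{O}(\chi^u) \to \E$, hence a morphism of support maps $h^u \to h$, which unwinds to a vector $e \in E$ with $\langle u, v\rangle \leq h_P(v)(e)$ for all $P$ and all $v \in |\mS_P|$; the remainder of the proof is an explicit convexity computation reducing the ``all $v$'' condition to vertices of $\mS_P$ and rays of $\tail(\mS_P)$. Your approach instead localizes over $Y$: using $r_*\mc{O}_{\t{X}(\mS)} = \mc{O}_{X(\mS)}$, torsion-freeness of $\pi_* r^*\E$, and flat base change, you write $H^0(X,\E)_u = \bigcap_{P} H^0(\t{X}(\mS)_P, \mc{E}_P)_u$ inside $E$ and then invoke the DVR sections formula pointwise. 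The paper's route is shorter and shows the statement is essentially tautological once the main theorem is in hand; your route is more modular and makes explicit that the description is simply the intersection over $Y$ of the DVR descriptions, using only the forward construction of Theorem~\ref{thm-main1}. One point to verify: you attribute the DVR sections formula (already in the vertex-and-ray form) to \cite{KMT}, but the present paper does not cite such a formula from there. If it is not stated in \cite{KMT} in that form, you would need to supply it---which is immediate from Theorem~\ref{th:vb-dvr} by the same ``sections $=$ morphisms from $\mc{O}(\chi^u)$'' argument together with the vertex-and-ray reduction that the paper carries out here.
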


\begin{proof}
Any element of $H^0(X, \E)_u$ corresponds to a $T$-equivariant morphism of vector bundles $$\mc{O} (\chi^u) \rar \E.$$ Here $\mc{O} (\chi^u)$ denotes the trivial vector bundle over $X$ with the $T$-action given by the character $\chi^u$. Let $h^u=(h^u_P)_{P \in Y}$ be the support map associated to $\mc{O}( \chi^u)$, where $h^u_P: |\mS_P| \rar \t{\B}_P(K)$ is defined by $(h^u_P(v))(f)= \val_P(f) + \langle u, v \rangle$ for $v \in |\mS_P|, \, f \in K$. Now, by Theorem \ref{thm-main1}, a morphism $\mc{O}(\chi^u) \rar \E$ corresponds to a morphism of support maps $h^u \rar h$. Thus, we will have a map of $K$-vector spaces $\phi:K \rar E$ defined by $1 \mapsto e$ such that for all $P \in Y, \, v \in |\mS_P|, \, f \in K$, we have 
\begin{equation*}
    \begin{split}
        h^u_P(v)(f) &\leq h_P(v)(\phi(f))\\
        \text{or, } \val_P(f) + \langle u, v \rangle &\leq h_P(v)(fe)= \val_P(f) + h_P(v)(e)\\
        \text{or, } \langle u, v \rangle &\leq h_P(v)(e).
    \end{split}
\end{equation*}
Therefore, 
\begin{equation}
    H^0(X, \E)_u=\{e \in E \mid  \langle u, v \rangle \leq h_P(v)(e) \text{ for all } P \in Y, \, v \in |\mS_P|\}. 
\end{equation}
Observe that if for some $e \in E$, we have $\langle u, v \rangle \leq h_P(v)(e)$, for all $v \in |\mc S_P|$, then for any $w \in \tail(\mc S_P)$, we will have that $\langle u, w \rangle \leq \lin(h_P)(w)(e)$. To see this, let $w \in \sigma_P$ and $\D_P \in \mS_P$ be such that $\tail (\D_P)=\sigma_P$. Then we have characters $u_1, \ldots, u_r \in M$ and a basis $b_1, \ldots, b_r$ of $E$ such that for any $v \in \D_P$, we have $$h_P(v)(\sum_{i=1}^r\lambda_i b_i)=\min_i\{\val_P(\lambda_i) + \langle u_i, v \rangle\}.$$ Then, $$\lin(h_P)(w)(\sum_{i=1}^r\lambda_i b_i)= \min_i\{ \langle u_i, w \rangle\}.$$ Now, for above $e=\sum_{i=1}^r\lambda_i b_i$ and any $t \in \Q_{\geq 0}$, $$h_P(v+tw)(e) \geq \langle u, v+tw \rangle.$$ This implies that, for all $i$ such that $\lambda_i \neq 0$, 
\begin{equation*}
    \begin{split}
      \val_P(\lambda_i) + \langle u_i, v+tw \rangle &\geq \langle u, v+tw \rangle\\
      \Rightarrow \val_P(\lambda_i) + \langle u_i, v \rangle + t \langle u_i, w \rangle &\geq \langle u, v \rangle + t \langle u, w \rangle.  
    \end{split}
\end{equation*}
Now, dividing by $t$ and then letting $t$ tend to infinity, for all $i$ such that $\lambda_i \neq 0$, we have, $$\langle u_i, w \rangle \geq \langle u, w \rangle.$$ This means that $$\langle u, w \rangle \leq \lin(h_P)(w)(e).$$
By the above argument, any $e \in  H^0(X, \E)$ satisfies the description in the right-hand side of equation \eqref{global}.

Now, let $e \in E$ be such that for all $P \in Y,$
\begin{equation*}
    \begin{split}
       \langle u, v \rangle &\leq h_P(v)(e), \,v \text{ vertex of } \mS_P\\ 
       \text{ and }\langle u, w \rangle &\leq \lin(h_P)(w)(e), w \in \tail(\mS_P)(1). 
    \end{split}
\end{equation*}
Let $v \in \D_P$, then 
     $$v= \sum t_lv_l + \sum s_jw_j,$$
     for $v_l$ vertices of $\D_P,\, w_j$ rays of $\tail (\D_P)$, and $t_l \in \Q_{\geq 0}$ such that $\sum t_l=1$ and $s_j \geq 0.$
 As above, we have characters $u_1, \ldots, u_r \in M$ and a basis $b_1, \ldots, b_r$ of $E$ such that for any $v \in \D_P$, we have $$h_P(v)(\sum_{i=1}^r\lambda_i b_i)=\min_i\{\val_P(\lambda_i) + \langle u_i, v \rangle\} $$ and $$\lin h_P(w)(\sum_{i=1}^r\lambda_i b_i)= \min_i\{ \langle u_i, w \rangle\}.$$  Now for above $e=\sum_{i=1}^r\lambda_i b_i$ and for all $i$ such that $\lambda_i \neq 0$, $$\val_P(\lambda_i) + \langle u_i, v_l \rangle \geq \langle u, v_l \rangle \text{ and } \langle u_i, w_j \rangle \geq \langle u, w_j\rangle .$$ Thus for all $i$ such that $\lambda_i \neq 0$, 
\begin{equation*}
    \begin{split}
        \val_P(\lambda_i) + \langle u_i, \sum t_lv_l + \sum s_jw_j \rangle &=\val_P(\lambda_i) + \sum t_l \langle u_i, v_l \rangle + \sum s_j \langle u_i, w_j \rangle\\
        &\geq \sum t_l\langle u, v_l \rangle + \sum s_j \langle u, w_j \rangle\\
        &=\langle u, v \rangle.
    \end{split}
\end{equation*}
This completes the proof.
\end{proof}

Recall that the linear part of $h$ corresponds to the toric vector bundle $\E_{\eta}$ over the toric variety $\t{X}(\mS_{\eta})$ (see Theorem \ref{thm-main1}). Since $\mS_{\eta}= \tail(\mS)$, by \eqref{global}, we have 
\begin{equation*}
    \begin{split}
         H^0(\t{X}(\mS_{\eta}), \E_{\eta})_u=\{e \in E \mid  \text{ for all } P \in Y, \, \langle u, w \rangle &\leq \lin(h_P)(w)(e), w \in \tail(\mS_P)(1) \} 
    \end{split}
\end{equation*}
(cf. \cite[Corollary 4.1.3]{Klyachko}). Then \eqref{global} becomes
\begin{equation}\label{global2}
\begin{split}
    H^0(X, \E)_u=\{e \in H^0(\t{X}(\mS_{\eta}), \E_{\eta})_u \mid  \text{ for all } P \in Y, \, \langle u, v \rangle &\leq h_P(v)(e),v \text{ vertex of } \mS_P \}. 
\end{split}
     \end{equation}
Now, for $P \in Y$ and $v$ a vertex of $\mS_P$, define the additive norm on $H^0(\t{X}(\mS_{\eta}), \E_{\eta})_u$, given by
\begin{equation*}
    \begin{split}
       h_P^u(v) : H^0(\t{X}(\mS_{\eta}), \E_{\eta})_u & \rar K\\
       e &\mapsto \lfloor h_P(v)(e)- \langle u, v \rangle \rfloor.
    \end{split}
\end{equation*}
Note that shift and floor of additive norms are additive norms \cite[Proposition 2.7]{KMT}. Also, taking a minimum of additive norms gives an additive norm. Now for each $P \in Y$, define the additive norm 
\begin{equation*}
    \begin{split}
       g_P^u : H^0(\t{X}(\mS_{\eta}), \E_{\eta})_u & \rar K\\
       e &\mapsto \min_{v \text{ is a vertex of }\mS_P} h^u_P(v)(e).
    \end{split}
\end{equation*}
Then we have a support map $g^u=(g^u_P)_{P \in Y}$, associated to which we have a vector bundle $\E^u_Y$ on $Y$ by Theorem \ref{thm-main1}. Observe that, by Theorem \ref{thm-global} and \eqref{global2}, we have the following result generalizing \cite[Proposition 3.23]{PS}.
\begin{corollary} Let $\E$ be a $T$-equivariant vector bundle on a $T$-variety $X=X(\mS)$, and let $\E^u_Y$ be the associated vector bundle on $Y$ defined above. Then \[H^0(X, \E)_u=H^0(Y, \E^u_Y).\]
\end{corollary}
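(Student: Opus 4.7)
The plan is to compare the descriptions of $H^0(X, \E)_u$ and $H^0(Y, \E^u_Y)$ directly, writing $E := H^0(\t{X}(\mS_\eta), \E_\eta)_u$ for brevity. First, I would appeal to equation \eqref{global2} to obtain
\[
H^0(X, \E)_u \;=\; \{e \in E : h_P(v)(e) \geq \langle u, v\rangle \text{ for all } P \in Y \text{ and every vertex } v \text{ of } \mS_P\}.
\]
Since $\lfloor x \rfloor \geq 0$ iff $x \geq 0$, each of these inequalities is equivalent to $h^u_P(v)(e) \geq 0$; taking the minimum over vertices of $\mS_P$, the condition at a fixed $P$ collapses to $g^u_P(e) \geq 0$. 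Hence
\[
H^0(X, \E)_u \;=\; \{e \in E : g^u_P(e) \geq 0 \text{ for all } P \in Y\}.
\]

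Next, I would identify this same set with $H^0(Y, \E^u_Y)$. By the construction preceding the corollary and Theorem \ref{thm-main1} applied to $Y$ viewed as a complexity-one $T$-variety with trivial torus --- where each slice is the single point $\{0\}$ and the tail fan is trivial --- the bundle $\E^u_Y$ has generic fiber $E$, and at each closed point $P$ its stalk corresponds to the $\mc O_{Y, P}$-lattice $\Lambda_P := \{e \in E : g^u_P(e) \geq 0\}$ under the lattice/additive-norm correspondence recalled in Section \ref{subsec-building}. Because a vector bundle on a smooth curve is recovered from, and its global sections computed as, the intersection of its local lattices inside its generic fiber,
\[
H^0(Y, \E^u_Y) \;=\; \bigcap_{P \in Y} \Lambda_P \;=\; \{e \in E : g^u_P(e) \geq 0 \text{ for all } P \in Y\},
\]
matching the display above and yielding the corollary.

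The only point needing a little care is the lattice-theoretic description of $H^0(Y, \E^u_Y)$ --- but this is precisely the standard presentation of a vector bundle on a smooth curve by its local lattices (compatible with Section \ref{subsec-toric-schemes}, and already implicit in the definition of $\E^u_Y$ from $g^u$). I expect no serious obstacle beyond the bookkeeping of translating between the conditions $h_P(v)(e) \geq \langle u, v\rangle$ and $g^u_P(e) \geq 0$, which is handled by the floor identity above and the definition $g^u_P(e) = \min_v h^u_P(v)(e)$.
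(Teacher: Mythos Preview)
Your argument is correct and mirrors the paper's own reasoning: the paper simply states that the corollary follows from Theorem \ref{thm-global} and \eqref{global2}, and your proof is precisely the unpacking of that claim---translate the vertex inequalities via the floor identity into $g^u_P(e)\geq 0$, then recognize this as the global-section condition for $\E^u_Y$. The only cosmetic difference is that you compute $H^0(Y,\E^u_Y)$ via the intersection of local lattices, whereas the paper would have you apply Theorem \ref{thm-global} once more to $\E^u_Y$ on $Y$ with trivial torus; these give the same description.
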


\section{Examples}  \label{sec-examples}
\subsection{Toric downgrades}
As usual $T$ is a torus with character lattice $M$ and cocharacter lattice $N$. 
Let $X_\Sigma$ be a $T$-toric variety corresponding to a complete fan $\Sigma$ in $N_\Q$. Let $T' \subset T$ be a codimension $1$ subtorus with lattice of one-parameter subgroups $N' \subset N$ such that $N/N' \cong \Z$ (i.e. $N/N'$ is torsion free). Then without loss of generality we can write $T = T' \times \bk^*$.

Let $\E$ be a $T$-toric vector bundle on $X_\Sigma$. Fix a point $x_0$ in the open $T$-orbit in $X_\Sigma$. Let $E_\bk = \E_{x_0}$ be the fiber over the distinguished point $x_0$. One knows that $\E$ corresponds to a piecewise linear map $\Phi: |\Sigma| \to \t{\B}(E_\bk)$.

We consider $X_\Sigma$ as a complexity-one $T'$-variety over the projective curve $Y = \mathbb{P}^1$. Let $E_K = K \otimes_\bk E_\bk$ where $K=\bk(Y)=\bk(t)$ is the field of rational polynomials in one variable. We would like to describe the support map $(h_P: |\mc S_P| \to \t{\B}_P(E_K))_{P \in \mathbb{P}^1}$ in terms of the piecewise linear map $\Phi$.

First we recall the divisorial fan data of $X_\Sigma$. 
Each cone $\sigma \in \Sigma$ gives rise to a polyhedral divisor $\mf D^\sigma$. 
Let $\pi: N_\Q \times \Q \to \Q$ be the projection on the second factor. We can divide the cones in $\Sigma$ into three groups: 
$$\Sigma^+ = \{ \sigma \in \Sigma \mid \sigma \subset N_\Q \times \Q_{\geq 0}\},$$
$$\Sigma^- = \{ \sigma \in \Sigma \mid \sigma \subset N_\Q \times \Q_{\leq 0}\},$$
$$\Sigma^0 = \Sigma \setminus (\Sigma^+ \cup \Sigma^-).$$
The polyhedral divisors $\mf D^\sigma$ with $\sigma$ in $\Sigma^+ \cup \Sigma^-$ have affine locus while $\mf D^\sigma$ with $\sigma \in \Sigma^0$ that do not lie in $\pi^{-1}(0)$ have complete locus. 

Recall that $\t{X}(\mc S)$ is the $T'$-variety with birational contraction $r: \t{X}(\mc S) \to X(\mc S)$ and good quotient $\t{X}(\mc S) \to Y$ (see Remark \ref{rem-tilde-X-S}). In our case, $\t{X}(\mc S) = \t{X}_\Sigma$ is the $T$-toric variety with a fan $\t{\Sigma}$ which is a refinement of $\Sigma$ such that the linear projection $\pi$ maps cones in $\t{\Sigma}$ to cones in the fan of $\mathbb{P}^1$. 

Let $P^+$, $P^- \in \mathbb{P}^1$ denote the $\bk^*$-fixed points corresponding to the rays $\Q_{\geq 0}$ and $\Q_{\leq 0}$ (in the fan of $\mathbb{P}^1$) respectively.
An infinitesimal neighborhood of $P^+$ (respectively $P^-$) in $\t{X}_\Sigma$ gives rise to a toric scheme $\t{X}^+$ (respectively $\t{X}^-$) over the DVR $\mathcal{O}_{P^+}$ (respectively $\mathcal{O}_{P^-}$). The polyhedral complex $\mc S_{P^+}$ (respectively, $\mc S_{P^-}$) is the intersection of $\Sigma$ with the hyperplane $N_\Q \times \{1\}$ (respectively the intersection of $\Sigma$ with the hyperplane $N_\Q \times \{-1\}$). The polyhedral complexes $\mc S_{P^+}$ and $\mc S_{P^-}$ are depicted in Figure \ref{fig:toric-downgrade} 

Let $P \in \mathbb{P}^1$. There is a natural map $$j_P: \t{\B}_\sph(E_\bk) \to \t{\B}_P(E_K).$$ The map $j_P$ sends the lattice point in $\t{\B}_\sph(E_\bk)$ determined by a decreasing $\Z$-filtration $(E_i)_{i \in \Z}$ in $E_\bk$ to the lattice point in $\t{\B}_P(E_K)$ determined by the $\mathcal{O}_{P}$-lattice $\Lambda$ in $E_K$ given by:
$$\Lambda = \sum_{i \in \Z} \mf{m}_P^i \otimes E_{-i},$$ where $\mf{m}_P$ denotes the maximal ideal of $\mc O_P$. 

One shows that the support map $(h_P: |\mc S_P| \to \t{\B}_P(E_K))_{P \in \mathbb{P}^1}$, corresponding to $\E$, regarded as a $T'$-equivariant vector bundle, is given by: 
$$h_P = j_P \circ \Phi_{|\,|\mc S_P|}: |\mc S_P| \to \t{\B}_\sph(E_\bk) \to \t{\B}_P(E_K).$$
Here, we regard $\mc S_P$ as lying in $N_\Q \times \{\pm 1\}$ or $N_\Q \times \{0\}$, depending on which $\bk^*$-orbit in $\mathbb{P}^1$ the point $P$ lies on. 

\begin{figure}
    \centering
    \includegraphics[height=8cm]{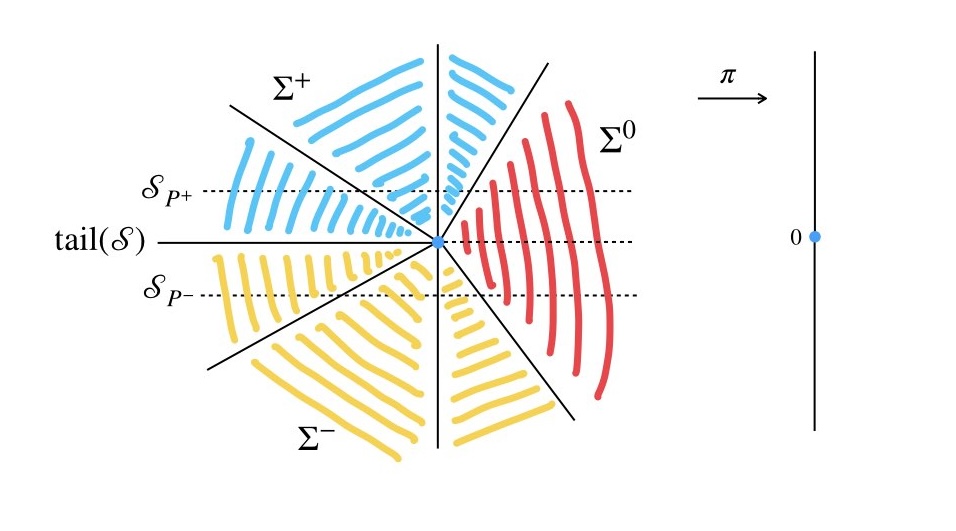}
    \caption{Fan of a toric surface as a complexity-one variety with the curve $Y = \mathbb{P}^1$}
    \label{fig:toric-downgrade}
\end{figure}

\subsection{(Co)tangent bundles}
Let $\mc S$ be a divisorial fan on $(Y,N)$, and let $X(\mc S)$ be the corresponding complexity-one $T$-variety over $\bk$.  Suppose that $X(\mc S)$ is smooth. We describe the data of the cotangent bundle $\Omega_{X(\mc S)/\bk}$. By Theorem \ref{thm-main1}, this reduces to describing the functions $\textup{cot}_P: |\mc S_P| \to \t{\B}_P(E_K)$, which classify the cotangent bundles $\Omega_{X(\mc S_P)/\bk}$ for $P \in Y$, where $E_K = (M\times \Z) \otimes K$. The upshot is that $\textup{cot}_P$ can be computed from the data of the cotangent bundle of the toric variety of the fan $c(\mc S_P) \subset N_\Q \times \Q_{\geq 0}$.  

Fix $p \in Y$. It suffices to describe this construction for each affine open $\U_\sigma = \Spec(R_\sigma)$, where $\sigma \in c(\mc S_P)$ (see Remark \ref{polyhedra and cone}). 
 Let $S_\sigma = \sigma^\vee \cap (M\times \Z)$, $V_\sigma = \Spec(\bk[S_\sigma])$, and $\t{V}_\sigma = \Spec(\mathcal{O}_P[S_\sigma])$. Suppose that $\Omega_{V_\sigma/\bk}$ has been given the following description as a module over $\bk[S_\sigma]$, where $u_i \in M$ and $b_i \in E_K$:
$$\Omega_{V_\sigma/\bk} = \bigoplus_{i =1}^{d+1} \left[\bigoplus_{(u - u_i, \ell) \in \sigma^\vee\cap (M\times \Z)} \chi^u t^\ell\bk b_i \right].$$
Then we must have:
$$\Omega_{\t{V}_\sigma/\mathcal{O}_P} = \bigoplus_{i =1}^{d+1} \left[\bigoplus_{(u - u_i, \ell) \in \sigma^\vee\cap (M\times \Z)} \chi^u t^\ell\mathcal{O} b_i \right].$$
From this we obtain a description of $\Omega_{\t{V}_\sigma/\bk}$:

$$\Omega_{\t{V}_\sigma/\bk} = \mathcal{O}_P[S_\sigma]d\varpi_P \oplus \bigoplus_{i =1}^{d+1} \left[\bigoplus_{(u - u_i, \ell) \in \sigma^\vee\cap M\times \Z} \chi^u t^\ell\mathcal{O} b_i \right].$$

Let $t \in \mathcal{O}_P[S_\sigma]$ denote the monomial corresponding to projection $N\times \Z \to \Z$. We have $R_\sigma \cong \mathcal{O}_P[S_\sigma]/\langle t - \varpi\rangle$, so we may use the exact sequence:
$$0 \to \langle t - \varpi\rangle/\langle t- \varpi\rangle^2 \to \Omega_{\t{V}_\sigma/\bk}\otimes_{\mathcal{O}_P[S_\sigma]} R_\sigma \to \Omega_{\U_\sigma/\bk} \to 0.$$
The quotient by the image of $t - \varpi$ identifies $d\varpi$ with an element in 
$$\bigoplus_{i =1}^{d+1} \left[\bigoplus_{(u - u_i, \ell) \in \sigma^\vee\cap M\times \Z} \chi^u t^\ell\mathcal{O} b_i \right] \otimes_{\mathcal{O}_P[S_\sigma]} R_\sigma.$$
Then, tensoring with $R_\sigma$ yields the expression:
$$\Omega_{\U_\sigma/\bk} = \bigoplus_{i =1}^{d+1} \left[\bigoplus_{(u - u_i, \ell) \in \sigma^\vee\cap M\times \Z} \chi^u \varpi^\ell\mathcal{O} b_i \right].$$
By \cite[Theorem 3.8]{KMT}, the corresponding function to $\t{\B}_P(E_K)$ has image in the apartment defined by the $b_i$, and can be computed by:
$$\Phi_\sigma(x)(\sum \lambda_i b_i) = \min\{\val_P(\lambda_i)+ \langle x, u_i \rangle\}.$$

\bibliographystyle{halpha}
\bibliography{Toric}
\end{document}